\numberwithin{equation}{section}
\DeclareMathOperator{\Ad}{Ad}
\DeclareMathOperator{\ad}{ad}
\DeclareMathOperator{\Aut}{\mathsf{Aut}}
\DeclareMathOperator{\End}{End}
\DeclareMathOperator{\Iso}{\mathsf{Iso}}
\DeclareMathOperator{\RF}{\mathsf{RF}}
\DeclareMathOperator{\Id}{\mathsf{Id}}
\DeclareMathOperator{\Ric}{\mathsf{Ric}}
\DeclareMathOperator{\ric}{\mathsf{ric}}
\DeclareMathOperator{\Scal}{\mathsf{Scal}}
\DeclareMathOperator{\R}{\mathbb{R}}
\DeclareMathOperator{\M}{\mathscr{M}}
\DeclareMathOperator{\C}{\mathbb{C}}
\DeclareMathOperator{\ke}{\mathsf{Ker}}
\DeclareMathOperator{\RP}{\R\mathsf{P}}
\DeclareMathOperator{\CP}{\C\mathsf{P}}
\DeclareMathOperator{\Lie}{\mathsf{Lie}}
\DeclareMathOperator{\rnk}{\mathsf{rank}}
\DeclareMathOperator{\Spin}{\mathsf{Spin}}
\DeclareMathOperator{\SO}{\mathsf{SO}}
\DeclareMathOperator{\Sp}{\mathsf{Sp}}
 \DeclareMathOperator{\SU}{\mathsf{SU}}
\DeclareMathOperator{\U}{\mathsf{U}}
\DeclareMathOperator{\G}{\mathsf{G}}
\DeclareMathOperator{\F}{F}
\DeclareMathOperator{\Oo}{\mathsf{O}}
\DeclareMathOperator{\E}{\mathsf{E}}
\DeclareMathOperator{\Ss}{\mathsf{S}}
\DeclareMathAlphabet{\mathpzc}{OT1}{pzc}{m}{it}
\definecolor{dblue}{rgb}{0.01,0.01,0.44}
\definecolor{red}{rgb}{0.57,0.11,0.15}
\newcommand{\fr}{\mathfrak}
\newcommand{\al}{\alpha}
\newcommand{\be}{\beta}
\newcommand{\bb}{\mathbb}
\newcommand{\thickline}{\noalign{\hrule height 1pt}}
\theoremstyle{plain}
\newtheorem{lemma}{Lemma} [section]
\newtheorem{theorem}[lemma]{Theorem}
\newtheorem{corol}[lemma] {Corollary}
\newtheorem{prop} [lemma]{Proposition}
\theoremstyle{definition}
\newtheorem{definition}[lemma] {Definition}
\newtheorem{example}[lemma] {Example}
\newtheorem{remark}[lemma] {Remark}
\newtheorem*{remark*}{Remark}
\definecolor{dark}{rgb}{0.18,0.18,0.68}
\definecolor{mydark}{rgb}{0.78,0.08,0.08}
\definecolor{crew}{rgb}{0.2,0.5,0.2}
\definecolor{mmg}{rgb}{0.31,0.50,0.23}
\definecolor{dblue}{rgb}{0.01,0.01,0.44}
\definecolor{red}{rgb}{0.57,0.11,0.15}
\definecolor{cobalt}{RGB}{61,89,171}
\title[Ancient solutions of the homogeneous  Ricci flow]{Ancient solutions of the homogeneous Ricci flow  on flag manifolds}
\author{Stavros Anastassiou} 
\address{Center for Research and Applications of Nonlinear Systems (CRANS), Department of Mathematics, University of Patras, Rion 26500, Greece}
\email{sanastassiou@gmail.com}
\author{Ioannis Chrysikos} 
\address{Faculty of Science, University of Hradec Kr\'alov\'e, Rokitanskeho 62, Hradec Kr\'alov\'e 50003, Czech Republic}
\email{ioannis.chrysikos@uhk.cz}
\begin{document}   

\begin{abstract}
 For any  flag manifold $M=G/K$ of a compact simple Lie group $G$ we describe non-collapsing ancient invariant solutions of the homogeneous  unnormalized Ricci flow. Such solutions  emerge from   an invariant Einstein metric  on $M$,  and   by \cite{BLM17} they must develop  a Type I singularity in their extinction  finite time, and  also to the past.    To illustrate the situation we engage ourselves with the global  study of the dynamical system induced by  the   unnormalized Ricci flow on any flag manifold $M=G/K$  with  second Betti number  $b_{2}(M)=1$, for a generic initial invariant metric. We  describe the corresponding dynamical systems and  present   non-collapsed ancient solutions, whose   $\alpha$-limit set  consists of fixed points at infinity of $\M^G$. Based on the Poincar\'{e}  compactification method, we show that these fixed points correspond to invariant Einstein metrics and  we   study their stability properties, illuminating thus the structure of the system's phase space.
\end{abstract}

\maketitle


\section*{Introduction}

Given a Riemannian manifold $(M^{n}, g)$, recall that  the {\it  unnormalized  Ricci flow} is  the geometric flow defined by
\begin{equation}\label{flow1}
 \frac{\partial g(t)}{\partial t}=-2\Ric_{g(t)}\,, \quad g(0)=g\,,
\end{equation}
where  $\Ric_{g(t)}$ denotes the Ricci tensor of   the one-parameter family $g(t)$.
 The above system consists of  non-linear second
order partial differential equations on the open convex cone  $\M$ of Riemannian metrics on $M$.  A smooth family   $\{g(t) : t\in [0, T)\subset \bb{R}\}\in\M$  defined  for some $0<T\leq\infty$, is said to be a solution  of the Ricci flow   with initial metric $g$, if it satisfies the system (\ref{flow1}) for any $x\in M$ and $t\in[0, T)$.  The Ricci flow was introduced in the celebrated work of Hamilton \cite{Ham} and nowadays is the essential  tool in the proof of the famous {\it Poincar\'e conjecture} and {\it Thurston's geometrization conjecture}, due to  the seminal works \cite{Pe1, Pe2} of G. Perelman.

In general, and since a system of partial differential equations is involved,   it is hard to produce  explicit examples of Ricci flow solutions.
However,  the Ricci flow for an initial {\it invariant metric} reduces to  a system of ODEs.  More precisely,  homogeneity implies bounded curvature  (see \cite{chen}), and thus the  isometries of the initial metric will be in fact the isometries of any involved metric. Hence, when $g$ is an invariant metric,  any solution $g(t)$ of (\ref{flow1}) is also invariant. As a result,  in some cases it is possible to solve the system explicitly and proceed to  a study of their asymptotic properties, or even specify analytical properties related to different type of singularities  and deduce curvature estimates, see  \cite{BohmW07,  Grama09, AC11, Grama12,  Buz,   Laf15, Bohm15, BLM17, AbN16,  Grama20}, and the  articles quoted therein. Especially for the non-compact case, note that during the last decade the  Ricci flow for {\it homogeneous}, or {\it cohomogeneity-one}  metrics, together with  the so-called  {\it bracket flow} play a key   role  in  the study of the {\it  Alekseevsky  conjecture}, see   \cite{Payne1, Lauret11, Lauret11b, Lauret13,   LL14a, LL14, BL18a, BL18}.

In this work we examine the Ricci flow,   on compact homogeneous spaces with {\it simple spectrum of isotropy representation}, in terms of Graev \cite{Gr06,  Gr13}, or of {\it monotypic isotropy representation}, in terms  of Buzano \cite{Buz}, or Pulemotov  and Rubinstein \cite{Pule}.  Nowadays, such spaces are of special interest due to their rich applications  in the theory  of homogeneous Einstein metrics, prescribed Ricci curvature, Ricci iteration, Ricci flow and other (see  \cite{AP, Sak99, Bohm04,  BWZ04, Gr06,  Chry210, Chry211, Buz, Gr13, CS1, Bohm15,  Pod, Pule,  Grama20}).   Here, we  focus on  {\it  flag manifolds} $M=G/K$ of a compact simple Lie group $G$ and examine   the dynamical system induced by the vector field corresponding to the homogeneous Ricci flow equation.  On such cosets (even for $G$ semisimple), the homogeneous   Ricci flow  cannot possess fixed (stationary) points, since by  the theorem of Alekseevsky and Kimel'fel'd \cite{AK75} invariant Ricci flat metrics must be flat and so they cannot exist.  However,  as follows from the maximum principle, the homogeneous Ricci flow  on any flag manifold $M=G/K$ must admit  {\it ancient invariant solutions},  which in fact by the work of B\"{o}hm \cite{Bohm15} must have finite extinction time (note that since any flag manifold  is  compact and simply connected and carries invariant Einstein metrics, e.g. invariant K\"ahler-Einstein metrics always exist, the first conclusion above occurs also by a result of Lafuente \cite[Cor.~4.3]{Laf15}).
 
 To be more specific, recall that a solution $g(t)$ of the Ricci flow   is called {\it ancient} if it has as interval of definition the open set $(-\infty, T)$, for some  $T<\infty$.   Such solutions are important, because they arise as limits of blow ups of singular solutions to the Ricci flow near finite time singularities, see \cite{EMT11}.  For our case it follows  that for  any flag manifold  $M=G/K$  one must be always  able to specify a $G$-invariant metric $g$, such that any (maximal) Ricci flow solution $g(t)$ with  initial condition  $g(0)=g$  has an interval
of definition of the form $(t_{a},  T)$, with $-\infty\leq t_{a}<0$ and $T<\infty$.  Indeed, for any flag space below  we will provide  explicit solutions of  this type,  which are {ancient};  
 They  arise by using an invariant K\"ahler-Einstein metric, which always exists, or any other possible existent invariant Einstein metric $g_{0}$, and  they are defined on open intervals of the form $(-\infty,  T)$, where 
 \[
 T=\frac{1}{2\lambda}=\frac{n}{2}\Scal(g_{0})^{-1},
 \]
  with $\lambda=\frac{\Scal(g_{0})}{n}>0$ being the corresponding Einstein constant (see Proposition \ref{ak}). All such solutions   become extinct when $t\to T$, in the sense that $g(t)\to 0$, i.e. they  tend to  $0$. As ancient solutions,  they have  positive scalar curvature $\Scal(g(t))$ (\cite{Ben06}) and the asymptotic behaviour  of $\Scal(g(t))$, at least for the trivial one, can be easily treated, see  Proposition \ref{ak}  which forms a specification of \cite[Thm.~1.1]{Laf15}  on flag manifolds (see also  Examples \ref{s2c}, \ref{3compexa}).  Moreover, for any invariant Einstein metric on $M=G/K$ with $b_2(M)=1$, we show  the existence of an unstable manifold and compute its dimension. For the invariant Einstein metrics which are not K\"{a}hler, we obtain a 2- or 3-dimensional unstable manifold,  depending on the specific case,   which implies the existence of non--trivial ancient solutions emerging from the corresponding Einstein metric. Actually, by \cite{BLM17},  it  also follows that  these ancient solutions   develop a Type I singularity.   This means  (see \cite{EMT11, Buz, Bohm15,  PLu17})
 \[
 \lim_{t\to-\infty}\Big(|t|\cdot{\rm sup}_{x\in M}\|\mathsf{Rm}(g(t))\|_{g(t)}(x, t)\Big)<\infty\,,
 \]
 where $\mathsf{Rm}(g(t))$ denotes the curvature tensor of $(M=G/K, g(t))$, or equivalently that 
  there is a constant $0<C_{g_{0}}<\infty$ such that   $(T-t)\cdot{\rm sup}_{M}\|\mathsf{Rm}(g(t))\|_{g(t)}\leq C_{g_{0}}$, 
 for any $t\in(-\infty, T=\frac{1}{2\lambda})$.
Finally, by \cite{BLM17} we also deduce that the predicted non-trivial ancient solutions are {\it non-collapsed}  (and the same the trivial one, see Corollary \ref{noncollapsed}).  In this point we should mention that {\it not} any compact homogeneous space $M=G/K$ of a compact (semi)simple Lie group $G$ admits (unstable) Einstein metrics (see for example  \cite{WZ86, ParkSak97} for non-existence results). So, even assuming that the universal covering of  $M^n=G/K$ is not diffeomorphic to $\R^n$ (which for the compact case is equivalent to say  that $M^n$ is not a $n$-torus), the predicted solutions of Lafuente can be in general hard to be specified. 

  

Note now that for any flag manifold $M=G/K$ of a compact simple Lie group $G$,  the symmetric space of invariant metrics $\M^{G}$ is the phase space of the homogeneous Ricci flow and it is flat, i.e. $\M^G\cong\R^{r}_{+}$ for some $r\geq 1$.
Therefore, the dynamical system of the homogeneous Ricci flow can be converted to a qualitative equivalent dynamical system of homogeneous polynomial equations and the well-known {\it  Poincar\'{e} compactification} (\cite{Poi}) strongly applies.  The main idea back of this method is to identify $\R^n$ with the northern and
southern hemispheres through central projections, and  then extend $X$ to a vector field $p(X)$ on $\Ss^n$ (see Section \ref{poinc}).   Here, for any  (non-symmetric)  {flag manifold} $M=G/K$ of a compact simple Lie group $G$ with $b_{2}(M)=1$, 
 we present the global study of   the dynamical system induced by the vector field corresponding to the unnormalized Ricci flow for an initial invariant metric, which is generic. In particular, the main contribution of this work is the description via the Poincar\'{e} compactification  method, of  the fixed points  of the homogeneous Ricci flow  at  the so-called {\it  infinity  of $\M^G$} (see Definition \ref{infin} and Remark \ref{fixinfinity}). Based on this method we can study  the stability properties of such fixed points, which we  prove that  are in bijective correspondence with  the existent  invariant Einstein metrics  on $M=G/K$, and moreover that coincide with  the $\alpha$-limit set of an invariant line, i.e. a solution of the homogeneous Ricci flow  which has as trace a line of $\M^G$.  It turns out that such solutions are ancient and non-collapsed, and develop Type I singularities.   Note that through the compactification procedure of  Poincar\'{e}, we are able to  distinguish   the unique invariant  K\"{a}hler-Einstein metric from the  other invariant Einstein metrics in terms of  (un)stable manifolds, in particular for every invariant Einstein metric we compute the dimension of the corresponding (un)stable manifold. Moreover, for the case $r=2$ we  discuss the  $\omega$-limit of  {\it any}  solution  of the homogeneous Ricci flow,  for details see Theorem \ref{mainthem}.  Since we are interested in the {\it unnormalized Ricci flow} on flag manifolds with  $b_{2}(M)=1$, we should finally mention that this dynamical system has been very recently examined by   \cite{Grama20}, for flag spaces with  three isotropy summands  (however via a  different method of the Poincar\'{e}  compactification),  while  a related study of   certain examples of flag manifolds with two isotropy  is given in \cite{Grama12}. Note finally that for flag manifolds with $r=2$, this work is complementary  to \cite{Buz}, in the sense that there were studied homogeneous ancient solutions  on compact homogeneous spaces with two isotropy summands; However, the  specific class of flag manifolds with $r=2$ was excluded (see at the end of the article \cite{Buz}), and the filling of this small gap was a motivation of the present work.

 The structure of the paper is given as follows: In Section \ref{prel} we refresh basics from the theory  of homogeneous spaces,   introduce the homogeneous Ricci flow and  recall some details from the structure and geometry of flag manifolds. Next, in Section \ref{ancient} we shortly present some basic results for ancient solutions emerging from  an invariant Einstein metric,  and also the Poincar\'{e}  compactification adapted to our scopes.  Finally, Section \ref{gloglo} is about the global study of the homogeneous Ricci flow for all non-symmetric flag manifolds $M=G/K$ of a compact simple Lie group $G$ with  $b_{2}(M)=1$, where a proof of  our main theorem, i.e. Theorem \ref{mainthem}, is presented.
 
\medskip
 \noindent {\bf Acknowledgements:}  
The authors are grateful to C. B\"{o}hm, R. Lafuente and Y. Sakane  for insightful comments.  They also thank J. Lauret and C. E. Will  for their  remarks  and pointing out a  mistake in the main  Theorem \ref{mainthem} in a previous draft of this manuscript.  I. C. acknowledges   support  by Czech Science Foundation, via the project GA\v{C}R no.~19-14466Y.  S. A.  thanks  the University of Hradec Kr\'alov\'e for hospitality along a research stay in September 2019.

\section{Preliminaries}\label{prel}
We begin by  recalling  preliminaries of the homogeneous Ricci flow.  After that we will   refresh useful notions of the structure and geometry  of generalized flag manifolds.

 \subsection{Homogeneous Ricci flow}Recall that a {\it homogeneous Riemannian manifold} is a homogeneous space  $M=G/K$ (see  \cite{Kob2, Cheg, AVL} for details on homogeneous spaces) endowed with a $G$-invariant metric $g$, that is $\tau_{a}^{*}g=g$ for any $a\in G$, where $\tau : G\times G/K\to G/K$ denotes the transitive $G$-action. Equivalently, is a  Riemannian manifold $(M, g)$ endowed with  a transitive action of  its isometry group $\Iso(M, g)$.   If $M$ is connected, then each   closed subgroup $G\subseteq\Iso(M, g)$ which is transitive on $M$ induces a presentation of $(M, g)$ as a homogeneous space, i.e. $M=G/K$, where $K\subset G$ is the  stabilizer of some point $o\in M$.  In this case, the transitive Lie group $G$ can
  be also assumed to be connected (since the connected component of the identity
of $G$ is also transitive on  $M$). Usually,  to emphasize on the   transitive group $G$, we say that $(M, g)$ is a $G$-homogeneous Riemannian manifold. However, note that may exist many closed subgroups    of $\Iso(M,g)$ acting transitively on $(M, g)$. Next we shall  work with connected homogeneous manifolds.

As it is well-known,  the geometric properties of  a homogeneous space can be examined by restricting our attention to a point.  Set $o=eK$ for the identity coset  of $(M^n=G/K, g)$ and let $T_{o}G/K$ be the corresponding tangent space.    Since we assume the existence of a $G$-invariant metric $g$,  $K\subset G$ can be identified with a closed subgroup of $\Oo(n)\equiv\Oo(T_{o}G/K)$ (or of $\SO(n)$ if $G/K$ is oriented),  so $K$ is  compact  and hence  any homogeneous Riemannian manifold $(M^n=G/K, g)$ is  a {\it reductive homogeneous space}. This means that  there is a complement $\fr{m}$ of the Lie algebra $\fr{k}=\Lie(K)$ of the stabilizer $K$ inside the Lie algebra  $\fr{g}=\Lie(G)$  of $G$, which is  $\Ad_{G}(K)$-invariant, i.e. $\fr{g}=\fr{k}\oplus\fr{m}$ and $\Ad_{G}(K)\fr{m}\subset\fr{m}$,
where $\Ad_{G}\equiv\Ad : G\to\Aut(\fr{g})$ denotes the adjoint representation of $G$.  Note that in general the reductive complement $\fr{m}$  may not be   unique, and  for a general homogeneous space $G/K$ a sufficient condition for its existence is the compactness of $K$. On the other hand, once such a decomposition
has been fixed, there is always a natural identification of  $\fr{m}$ with the tangent space $T_{o}G/K=\fr{g}/\fr{k}$,    given by
\[
    X\in\fr{m} \longleftrightarrow X^{*}_{o} = \frac{d}{dt}\big|_{t=0}\tau_{\exp tX}(o)\in T_{o}G/K\,,
\]
where  $\exp tX$ is the one-parameter subgroup of $G$	 generated by $X$.  
 Under the linear isomorphism $\fr{m}=T_{o}G/K$,   the isotropy representation $\chi : K\to\Aut(\fr{m})$, defined by $\chi(k):=(d\tau_{k})_{o}$ for any $k\in K$,  is equivalent  with the representation $\Ad_{G}|_{K} : K\times\fr{m}\to\fr{m}$. Hence,    $\chi(k)X=\Ad_{G}(k)X$ 
  for any $k\in K$ and $X\in\fr{m}$.  In terms of Lie algebras we have $\chi_{*}(Y)X=[Y, X]_{\fr{m}}$ for any $Y\in\fr{k}$ and $X\in\fr{m}$, or in other words $\chi_{*}(Y)=\ad(Y)|_{\fr{m}}$.
 
The homogeneous spaces $M=G/K$ that we will   examine below (with $K$ compact),  are assumed to be almost effective, which means that  the kernel $\ke(\tau)$ (which is a normal subgroup both of $G$ and $K$),   is {\it finite}.   Thus, the isotropy representation $\chi$ is assumed to have a finite kernel, and then we may identify $\fr{k}$ with the Lie algebra $\chi_{*}(\fr{k})=\Lie(\chi(K))$ of the linear isotropy group  $\chi(K)\subset\Aut(\fr{m})$. When only the identity element $e\in G$  acts as the identity transformation on $M=G/K$, then the $G$-action is called {effective} and   the isotropy representation $\chi$ is injective. If we assume for example that $G\subseteq\Iso(M, g)$ is a closed subgroup, then  the action of $G$ to $G/K$  is effective.
Note that an almost effective action of $G$ on $G/K$ gives rise to an effective action of the group $G'=G/\ke(\tau)$ (of the same dimension with $G$), so we will not worry much for the effectiveness of $M=G/K$.  

Recall that the space of $G$-invariant symmetric covariant 2-tensors
on a  (almost) effective homogeneous space $M=G/K$ with a reductive decomposition $\fr{g}=\fr{k}\oplus\fr{m}$, is naturally isomorphic with the space of  symmetric bilinear forms on
$\fr{m}$, which are invariant under the isotropy action of $K$ on $\fr{m}$.  As a consequence, the space $\M^{G}$ of $G$-invariant Riemannian metrics on $M=G/K$ coincides with the space of inner products $\langle \ , \ \rangle$ on $\fr{m}$ satisfying
\[
\langle X, Y\rangle=\langle \chi(k)X, \chi(k)Y\rangle=\langle\Ad_{G}(k)X, \Ad_{G}(k)Y\rangle\,,
\]
for any $k\in K$ and $X, Y\in\fr{m}$. The correspondence is given by $\langle X, Y\rangle=g(X, Y)_{o}$. Moreover, when $K$ is compact and  $\fr{m}=\fr{h}^{\perp}$  with respect to $B=-B_{\fr{g}}$, where $B_{\fr{g}}$ is the Killing form of $\fr{g}$, then one can extend the above correspondence between elements $g=\langle \ , \ \rangle\in\M^G$  and $\Ad_{G}(K)$-invariant $B$-selfadjoint positive-definite endomorphisms $L : \fr{m}\to\fr{m}$ of $\fr{m}$, i.e. $\langle X, Y\rangle=B(LX, Y)$, for any $X, Y\in\fr{m}$. To simplify the text, whenever is possible next we shall relax the notation $\Ad_{G}(K)$ to $\Ad(K)$ and  scalar products on $\fr{m}$ as  above, will be just referred  to as $\Ad(K)$-invariant scalar products.  Note that the $\Ad(K)$-invariance of $\langle \ , \ \rangle$ implies its $\ad(\fr{k})$-invariance, which means that the endomorphism $\ad(Z)|_{\fr{m}} : \fr{m}\to \fr{m}$ is skew-symmetric with respect to $\langle \ , \ \rangle$, for any   $Z\in\fr{k}$.  When $K$ is connected,  the inclusions $\Ad(k)\fr{m}\subset\fr{m}$  and $[\fr{k}, \fr{m}]\subset\fr{m}$ are equivalent and hence one can pass from the $\Ad(K)$-invariance to $\ad(\fr{k})$-invariance and conversely. From now on will denote by $P(\fr{m})^{\Ad(K)}$ the   space  of all $\Ad(K)$-invariant inner products on  $\fr{m}$.

 Given a Riemannian manifold $(M^n, g)$, a solution  of the Ricci flow is a family of Riemannian metrics $\{g_{t}\}\in\M$
 satisfying the system (\ref{flow1}).
If the  initial metric $g=g(0)\in\M$ is a {\it $G$-invariant metric} with respect to some closed subgroup $G\subseteq\Iso(M, g)$,  i.e. $(M=G/K, g)$ is a  {\it homogeneous Riemannian manifold} and so $g\in\M^G$, then the solution $\{g(t)\}$ is called {\it homogeneous},   i.e.  $\{g(t)\}\in\M^G$. Indeed,  the   isometries of $g$ are  isometries for any other evolved metric, and by \cite{Kot10} it is known that the isometry
group is  preserved under the
Ricci flow.   Thus, after considering a reductive decomposition $\fr{g}=\fr{k}\oplus\fr{m}$ of $(M=G/K, g)$,  the homogeneity of $g$ allows us to reduce the Ricci flow to a system of  ODEs  for a curve of $\Ad(K)$-invariant inner products on $P(\fr{m})^{\Ad(K)}$, where $\fr{m}\cong T_{o}G/K$ is a reductive complement.  In particular,  due to the identification $\M^{G}\cong P^{\Ad(K)}(\fr{m})$ we may write $g(t)=\langle \ , \ \rangle_{t}$ and then    (\ref{flow1}) takes the form
\[
\frac{d}{dt}\langle \ , \ \rangle_{t}=-2\Ric_{\langle \ , \ \rangle_{t}}\,,\quad \langle \ , \ \rangle_{0}\equiv\langle \ , \ \rangle=g\,,
\]
where $\Ric_{\langle \ , \ \rangle_{t}}$ denotes the $\Ad(K)$-invariant bilinear form on $\fr{m}$, corresponding to the Ricci tensor of $g(t)$. Note that since $g_{0}=g(0)$ is an invariant metric, the  solution $g(t)$ of (\ref{flow1}) must be unique among
complete, bounded curvature metrics (see \cite{chen}).
\begin{remark}
When one is interested in  more general homogeneous spaces $M=G/K$ and a reductive decomposition may  not exist,  the above setting can be appropriately  transferred   to $\fr{g}/\fr{k}\cong T_{o}G/K$. However, the ``reductive setting'' serves well  the goals of this paper and it is sufficient for  our subsequent computations and description.
\end{remark}

\subsection{Flag manifolds}
 Let $G$ be a compact semisimple Lie group with Lie algebra $\Lie(G)=\fr{g}$. A {\it flag manifold}\footnote{Also called {\it complex flag manifold}, or {\it generalized flag manifold}.}  is an adjoint orbit  of  $G$, i.e. 
 \[
 M=\Ad(G)w=\{\Ad(G)w: g\in G\}\subset\fr{g}
 \]    for some left-invariant vector field $w\in\fr{g}$.  Let $K=\{g\in G : \Ad(g)w=w\}\subset G$ be the isotropy subgroup of $w$ and let $\fr{k}=\Lie(K)$ be the corresponding Lie algebra. 
 Since $G$ acts on $M$ transitively, $M$ is diffeomorphic to the (compact) 
 homogeneous space $G/K$, that is $\Ad(G)w=G/K$. In particular, $\fr{k}=\{X\in\fr{g} : [X, w]=0\}=\ker\ad(w)$, where  $\ad :\fr{g}\to\End(\fr{g})$ is the adjoint representation of $\fr{g}$. Moreover, the set $S_{w}=\overline{\{\exp(tw) : t\in\mathbb{R}\}}$  
  is a torus in $G$ and the isotropy subgroup $K$ is 
 identified with the centralizer in $G$ of $S_{w}$, i.e. $K=C(S_{w})$.
 Hence $\rnk G=\rnk K$ and $K$ is connected. Thus, equivalently a  flag manifold  is a homogeneous space  of 
  the form $G/K$, where $K=C(S)=\{g\in G : ghg^{-1}=h  \ \mbox{for all} \  h\in S\}$ 
  is the centralizer of a torus $S$ in $G$.  When $K=C(T)=T$ is the centralizer of a maximal torus $T$ in $G$, the $G/T$ is called a {\it full flag manifold}. Flag manifolds admit a finite number of invariant complex structures, in particular flag spaces $G/K$ of a compact, simply connected, simple Lie group $G$ exhaust all compact, simply  connected, de Rham irreducible homogeneous K\"ahler manifolds (see for example \cite{AP,  Chry210,  Chry212, AC2019} for further details). 
  
So, for any flag manifold $M=G/K$ we may work with $G$ {\it simply connected} (if for instance a flag manifold of $\SO(n)$ is given,  one can always pass to its universal covering by  using the double covering $\Spin(n)$). For our scopes, it is also sufficient  to focus on the de Rham irreducible case, which is equivalent to say that $G$ is {\it simple}, see \cite{Kob2}. Hence, in the following we can always  assume  that $M=G/K$ satisfies these conditions, and   as before we shall denote by $B_{\fr{g}}$ the Killing form of  the Lie algebra $\fr{g}$.  The $\Ad(G)$-invariant inner product $B:=-B_{\fr{g}}$ induces a  bi-invariant metric on $G$ by left translations, and we may  fix, once and for all, a $B$-orthogonal $\Ad(K)$-invariant decomposition $\fr{g}=\fr{k}\oplus\fr{m}$.    $G$-invariant Riemannian metrics  on $G/K$
will be identified  with  $\Ad(K)$-invariant inner products
$\langle \ , \ \rangle$ on the reductive complement $\fr{m}=T_{o}G/K$.  
 Note that   the restriction $B\big|_{\fr{m}}$ induces the so-called Killing metric $g_{B}\in\M^G$, which is the unique invariant metric for which the natural projection $\pi : (G, B) \longrightarrow (G/K, g_{B})$  is a Riemannian submersion.

   The second Betti number of any   flag manifold $M=G/K$ is encoded in    the corresponding {\it painted Dynkin diagram}. To recall the procedure, let $ \fr{g}^{\mathbb{C}}=\fr{h}^{\mathbb{C}}\oplus\sum_{\al\in R}\fr{g}_{\al}^{\mathbb{C}}$
 be the usual root space decomposition of the complexification 
  $\fr{g}^{\mathbb{C}}$ of $\fr{g}$, with respect to a  Cartan subalgebra $\fr{h}^{\mathbb{C}}$ of $\fr{g}^{\mathbb{C}}$, 
 where $R\subset(\fr{h}^{\mathbb{C}})^{*}$ is the root system of $\fr{g}^{\mathbb{C}}$.    Via the Killing form of $\fr{g}^{\C}$ we identify $(\fr{h}^{\mathbb{C}})^{*}$ with $\fr{h}^{\mathbb{C}}$.   Let  $\Pi=\{\al_{1}, \ldots, \al_{\ell}\}$ \ $(\dim\fr{h}^{\mathbb{C}}=\ell)$ be a fundamental system of $R$ and choose a subset  $\Pi_{K}$   of $\Pi$. We denote by  $R_{K}=\{\be\in R : \be=\sum_{\al_{i}\in\Pi_{K}}k_{i}\al_{i}\}$  the closed subsystem spanned by $\Pi_{K}$. Then, the Lie subalgebra  $  \fr{k}^{\mathbb{C}}=\fr{h}^{\mathbb{C}}\oplus\sum_{\be\in R_{K}}\fr{g}_{\be}^{\mathbb{C}}$ is a reductive subalgebra of $\fr{g}^{\bb{C}}$, i.e.   it admits a decomposition of the form  $\fr{k}^{\bb{C}}=Z(\fr{k}^{\bb{C}})\oplus\fr{k}_{ss}^{\bb{C}}$,   where $Z(\fr{k}^{\bb{C}})$ is its center and   $\fr{k}_{ss}^{\bb{C}}=[\fr{k}^{\bb{C}}, \fr{k}^{\bb{C}}]$  the semisimple part of $\fr{k}^{\bb{C}}$. In particular,  $R_{K}$ is the root system of $\fr{k}^{\bb{C}}_{ss}$, and  thus $\Pi_{K}$ can be considered as the associated fundamental   system.  Let    $K$ be the connected Lie subgroup of $G$
  generated by $\fr{k}=\fr{k}^{\mathbb{C}}\cap\fr{g}$. Then the homogeneous manifold $M=G/K$ is a flag manifold, and any flag manifold is defined in this way, i.e.,  by the choise of a triple $(\fr{g}^{\bb{C}}, \Pi, \Pi_{K})$, see also \cite{AP, Chry210, Chry212, AC2019}.

      Set  $\Pi_{M}=\Pi\backslash \Pi_{K}$  and  $R_{M}=R\backslash R_{K}$, such that   $\Pi=\Pi_{K}\sqcup \Pi_{M}$, and $R=R_{K}\sqcup R_{M}$, respectively.  Roots in  $R_{M}$ are called  {\it complementary roots}. 
 Let $\Gamma=\Gamma(\Pi)$ be the Dynkin diagram of the fundamental 
 system $\Pi$. 
 \begin{definition}
Let $M=G/K$ be a flag manifold.  By painting  black the nodes  of $\Gamma$  corresponding    to  $\Pi_{M}$, we obtain the {\it painted Dynkin diagram}  of  $G/K$ (PDD in short).  In this diagram
    the subsystem $\Pi_{K}$ is determined as the subdiagram of white roots.
 \end{definition}
  \begin{remark}
Conversely, given a PDD, one may determine the associated flag manifold $M=G/K$   as follows: The group $G$ is defined as the unique simply connected Lie group generated by the  unique real   form $\fr{g}$   of the   complex simple Lie algebra $\fr{g}^{\bb{C}}$ (up to inner automorphisms of $\fr{g}^{\bb{C}}$),  which is reconstructed by the  underlying Dynkin diagram.  Moreover, the connected Lie subgroup $K\subset G$ is defined by using  the encoded by the PDD splitting $\Pi=\Pi_{K}\sqcup\Pi_{M}$; The semisimple part of $K$ is obtained from  the (not  necessarily connected) subdiagram of white simple roots,  while  each black root, i.e.   each  root in $\Pi_{M}$,  gives rise to a $\U(1)$-summand.    Thus,  the PDD determines 
 the  isotropy group $K$ and the space $M=G/K$ completely.        By using certain rules to determine  whether     different PDDs define isomorphic 
  flag manifolds (see \cite{AP}),  one  can obtain all flag manifolds $G/K$ of  a compact  simple Lie group $G$ (see for example the tables in \cite{AC2019}).
\end{remark}
    \begin{prop}\label{betti} \textnormal{(\cite{Bor, AP})}
    The second Betti number of  a    flag manifold $M=G/K$ equals  to the totality of black nodes in   the corresponding  PDD, i.e. the cardinality of the set $\Pi_{M}$.
    \end{prop}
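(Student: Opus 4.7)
The plan is to compute $b_2(M) = \dim_{\mathbb{R}} H_2(G/K;\mathbb{R})$ by passing through homotopy groups, exploiting the fact already recorded in the excerpt that for a flag manifold $G/K$ with $G$ compact simply connected simple, the isotropy $K$ is connected and splits (up to a finite kernel) as $K = K_{ss}\cdot Z(K)^0$, where the PDD construction identifies $Z(K)^{0}$ with a torus whose dimension equals $|\Pi_M|$ (one $\U(1)$-factor per black node).

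First, I would invoke the long exact homotopy sequence of the principal fibration $K\hookrightarrow G \to G/K$:
\begin{equation*}
\pi_2(G)\to \pi_2(G/K)\to \pi_1(K)\to \pi_1(G)\to \pi_1(G/K)\to \pi_0(K)\to \pi_0(G).
\end{equation*}
Since $G$ is compact, simply connected and semisimple one has $\pi_2(G)=0$ and $\pi_1(G)=0$ (the second by standard results on compact Lie groups), while $\pi_0(K)=0$ because $K$ is connected. From these vanishings the sequence forces $\pi_1(G/K)=0$ and the connecting map yields an isomorphism $\pi_2(G/K)\cong \pi_1(K)$. Simply connectedness together with Hurewicz then gives $H_2(G/K;\mathbb{Z})\cong \pi_2(G/K)\cong \pi_1(K)$.

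Next I would compute the free rank of $\pi_1(K)$. Writing $K$ as the almost direct product $K_{ss}\cdot Z(K)^0$ with finite intersection, one has a short exact sequence of topological groups from which $\pi_1(K)$ differs from $\pi_1(K_{ss})\oplus \pi_1(Z(K)^0)$ only by a finite contribution coming from the finite intersection. Since $K_{ss}$ is compact semisimple its fundamental group is finite (hence pure torsion), whereas $Z(K)^0\cong (S^1)^{|\Pi_M|}$ is a torus, contributing $\pi_1(Z(K)^0)\cong \mathbb{Z}^{|\Pi_M|}$. Thus the free rank of $\pi_1(K)$ equals $|\Pi_M|$, and consequently
\begin{equation*}
b_2(G/K)\;=\;\mathrm{rank}_{\mathbb{Z}}\,H_2(G/K;\mathbb{Z})\;=\;\mathrm{rank}_{\mathbb{Z}}\,\pi_1(K)\;=\;|\Pi_M|.
\end{equation*}

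The only subtle step is the identification $\dim Z(K)^0 = |\Pi_M|$, which I would justify directly from the PDD recipe described just before the statement: removing $\Pi_K$ from $\Pi$ leaves $|\Pi_M|$ simple roots, each giving rise to a one-dimensional central $\U(1)$-summand of $K$, while the white nodes assemble the semisimple factor $K_{ss}$. Everything else reduces to a standard homotopy/Hurewicz argument, so I expect no serious obstacle beyond making the almost direct product decomposition of $K$ precise enough to control the torsion in $\pi_1(K)$.
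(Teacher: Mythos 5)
Your argument is correct and complete. Note, however, that the paper does not prove this proposition at all: it is stated as a known fact with a citation to Borel--Hirzebruch and Alekseevsky--Perelomov, so there is no in-text proof to compare against. What you supply is the standard homotopy-theoretic derivation, and every step checks out: $\pi_2(G)=0$ for a compact Lie group (Cartan), $\pi_1(G)=0$ by the simple-connectedness assumption the paper imposes on $G$, $\pi_0(K)=0$ since $K=C(S_w)$ is connected, so the long exact sequence of $K\hookrightarrow G\to G/K$ gives $\pi_1(G/K)=0$ and $\pi_2(G/K)\cong\pi_1(K)$, and Hurewicz plus universal coefficients reduces $b_2$ to the free rank of $\pi_1(K)$. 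The finite-covering argument for the almost direct product $K_{ss}\cdot Z(K)^0$ correctly shows that this free rank is $\dim Z(K)^0$, since $\pi_1(K_{ss})$ is finite by Weyl's theorem and the image of $\pi_1(K_{ss}\times Z(K)^0)$ has finite index in $\pi_1(K)$. The identification $\dim Z(K)^0=|\Pi_M|$ is exactly what the paper's PDD recipe asserts ($Z(\fr{k}^{\bb{C}})$ is the annihilator in $\fr{h}^{\bb{C}}$ of $R_K$, of dimension $\ell-|\Pi_K|$), so you are not assuming anything beyond what the surrounding text already records. The classical references establish the stronger statement $H^2(G/K;\bb{Z})\cong\bb{Z}^{|\Pi_M|}$ (torsion-free, e.g.\ via the Bruhat cell decomposition or the spectral sequence of $G/K\to BK\to BG$); your route only pins down the rank, but that is all the proposition claims, so the proof stands as written.
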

    Note that for any flag manifold $M=G/K$, the $B$-orthogonal reductive complement $\fr{m}$ decomposes into a direct sum of $\Ad(K)$-inequivalent and irreducible submodules, which we call {\it isotropy summands}, see  \cite{AP, Chry210, AC2019} and the references therein. This means that  when  $\fr{m}\cong T_{o}M$ is viewed as a $K$-module,  then there is always   
 a  $B$-orthogonal $\Ad(K)$-invariant decomposition 
\begin{equation}\label{mmm}
 \fr{m}= \fr{m}_{1}\oplus\cdots\oplus\fr{m}_{r} \,,
 \end{equation}
for some $r\geq 1$, such that
\begin{itemize}
\item $K$ acts (via the isotropy representation)  irreducibly on any $\fr{m}_1, \ldots, \fr{m}_r$, and
\item  $\fr{m}_i\ncong\fr{m}_{j}$ are inequivalent as $\Ad(K)$-representations for any $i\neq j$. 
\end{itemize}
In fact,  a decomposition as in {\rm (\ref{mmm})} satisfying the given conditions must be unique,  up to a permutation of the isotropy summands, see for example \cite{Pule}.    When $r=1$,  $M=G/K$ is an isotropy irreducible compact Hermitian symmetric space (HSS in short), and all these cosets can be viewed as flag manifolds with $b_{2}(M)=1$. Note that from the class of full flag manifolds only $\CP^{1}\cong\Ss^{2}=\SU(2)/\U(1)$ is an irreducible HSS, and hence a flag manifold with $b_{2}(M)=1$. In this text we are mainly interested in  non-symmetric flag manifolds $M=G/K$ with $b_{2}(M)=1$, and in this case $r$ is bounded by  the inequalities $2\leq r\leq 6$ (see below). 
  \begin{lemma} \label{monotypic} \textnormal{(\cite{AP, Chry210})}
  Let $M=G/K$ be a flag manifold of a compact  simple Lie group. Then, the  isotropy representation  of $M$ is monotypic and decomposes as in {\rm (\ref{mmm})}, for some $r\geq 1$. Moreover,  any  $G$-invariant metric on $M=G/K$ is  given by 
\begin{equation}\label{diago}
 g=\langle \ , \ \rangle=\sum_{i=1}^{r}x_{i}\cdot B|_{\fr{m}_i}\,,
 \end{equation}
where $x_i\in\R_{+}$ are positive real numbers for any $i=1, \ldots, r$.  Thus,  $\M^G$ coincides with the open convex cone   $\R^{r}_{+}=\{(x_1,  \ldots, x_{r})\in\R^{r} : x_{i}>0 \ \text{for any} \  \ i=1 \ldots r\}$. 
\end{lemma}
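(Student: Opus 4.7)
The plan is to leverage the root-space decomposition of $\fr{g}^{\bb{C}}$ to obtain an irreducible $\Ad(K)$-invariant splitting of $\fr{m}$, and then apply Schur's lemma to deduce the form of any $G$-invariant metric.

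First, I would exploit that $K=C(S)$ contains a maximal torus of $G$, so $\rnk K=\rnk G$ and we may place a Cartan subalgebra $\fr{h}^{\bb{C}}$ of $\fr{g}^{\bb{C}}$ inside $\fr{k}^{\bb{C}}$. The root-space decomposition then restricts to $\fr{k}^{\bb{C}}=\fr{h}^{\bb{C}}\oplus\sum_{\beta\in R_{K}}\fr{g}_{\beta}^{\bb{C}}$ and hence yields $\fr{m}^{\bb{C}}=\sum_{\alpha\in R_{M}}\fr{g}_{\alpha}^{\bb{C}}$, so the complementary roots encode the isotropy representation completely.

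Next, I would introduce the $T$-root equivalence relation on $R_{M}$: declare $\alpha\sim\alpha'$ iff $\alpha|_{Z(\fr{k}^{\bb{C}})}=\alpha'|_{Z(\fr{k}^{\bb{C}})}$, and for each class represented by $\xi$ set $\fr{m}_{\xi}^{\bb{C}}:=\bigoplus_{\alpha\mid_{Z(\fr{k}^{\bb{C}})}=\xi}\fr{g}_{\alpha}^{\bb{C}}$. These are exactly the weight spaces of $Z(\fr{k}^{\bb{C}})$ on $\fr{m}^{\bb{C}}$. A structural argument standard in the theory of generalized flag manifolds (cf.~\cite{AP, Chry210}) shows that each $\fr{m}_{\xi}^{\bb{C}}$ is irreducible as a $\fr{k}^{\bb{C}}$-module, and that $\fr{m}_{\xi}^{\bb{C}}\ncong\fr{m}_{\xi'}^{\bb{C}}$ for $\xi\neq\xi'$ because they are already distinguished by the central character of $Z(\fr{k}^{\bb{C}})$. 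Pairing $\xi$ with $-\xi$ and descending to the compact real form produces the desired $B$-orthogonal $\Ad(K)$-invariant decomposition $\fr{m}=\fr{m}_{1}\oplus\cdots\oplus\fr{m}_{r}$ into pairwise inequivalent irreducible summands; $B$-orthogonality across summands is automatic because $B_{\fr{g}}$ pairs $\fr{g}_{\alpha}^{\bb{C}}$ non-trivially only with $\fr{g}_{-\alpha}^{\bb{C}}$.

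With the monotypic decomposition in hand, the form of an invariant metric follows formally from Schur's lemma. Since $\Hom_{K}(\fr{m}_{i},\fr{m}_{j})=0$ for $i\neq j$, any $\Ad(K)$-invariant symmetric bilinear form on $\fr{m}$ must be block-diagonal with respect to (\ref{mmm}). A second application of Schur's lemma to each irreducible summand forces the restriction to $\fr{m}_{i}$ to be a scalar multiple of $B|_{\fr{m}_{i}}$, and positive-definiteness forces that scalar to lie in $\R_{+}$. This yields the parametrisation $g=\sum_{i=1}^{r}x_{i}\cdot B|_{\fr{m}_{i}}$ with $x_{i}>0$, and identifies $\M^{G}$ with the open convex cone $\R_{+}^{r}$. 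The main obstacle will be the irreducibility of each weight space $\fr{m}_{\xi}^{\bb{C}}$ under $\fr{k}^{\bb{C}}$: this is the non-formal ingredient and relies on the simplicity of $\fr{g}^{\bb{C}}$ (so that the root system is irreducible) together with the description of $R_{M}$ as a union of $R_{K}$-strings; once it is in place, the remaining assertions reduce to representation-theoretic bookkeeping.
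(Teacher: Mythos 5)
The paper does not actually prove this lemma---it is quoted from \cite{AP, Chry210}---and your argument is precisely the standard one from those references: place a Cartan subalgebra inside $\fr{k}^{\bb{C}}$, identify $\fr{m}^{\bb{C}}$ with the sum of the complementary root spaces, group them by $T$-roots (central characters of $Z(\fr{k}^{\bb{C}})$), invoke the irreducibility and pairwise inequivalence of these weight spaces, and finish with Schur's lemma. The only step worth tightening is the last one: each real summand $\fr{m}_i$ is of complex type (its complexification is $\fr{m}_{\xi}^{\bb{C}}\oplus\fr{m}_{-\xi}^{\bb{C}}$ with $\xi\neq-\xi$), so $\End_{K}(\fr{m}_i)\cong\C$ rather than $\R$, and to conclude that the invariant \emph{symmetric} forms on $\fr{m}_i$ are spanned by $B|_{\fr{m}_i}$ alone you must additionally observe that the generator of the non-scalar part of $\End_{K}(\fr{m}_i)$ (the invariant complex structure) is $B$-skew.
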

Invariant metrics as in (\ref{diago}) are called {\it diagonal} (see \cite{WZ86} for details). By Schur's Lemma,  the Ricci tensor  $\Ric_{g}$ of such a diagonal invariant metric $g$, needs  to preserve  the splitting (\ref{mmm}) and consequently, $\Ric_g$ is also diagonal, i.e. $\Ric_{g}(\fr{m}_i, \fr{m}_j)=0$, whenever $i\neq j$.   As before, $\Ric_{g}$ is  determined by  a symmetric $\Ad(K)$-invariant bilinear form on    $\fr{m}$, although  not necessarily positive definite, and hence it has the expression
 \[
 \Ric_{g}\equiv\Ric_{\langle \ , \ \rangle} =\sum_{i=1}^{r}y_{i}\cdot B\big|_{\fr{m}_{i}} =\sum_{i=1}^{r}(x_{i} \cdot\ric_{i})\cdot B\big|_{\fr{m}_{i}}\,,
 \]
for some  $y_{i}=x_{i} \cdot\ric_{i}\in\mathbb{R}$, where $\langle \ , \ \rangle\in P^{\Ad(K)}(\fr{m})$ is the  $\Ad(K)$-invariant inner product corresponding to  $g\in\M^G$ and $\ric_{i}$ are the so-called {\it Ricci components}.
There is a simple description of   $\ric_{i}$, and hence of $\Ric_g$ (and also of the scalar curvature $\Scal_{g}={\rm tr}\Ric_{g})$, in terms of the  metric parameters $x_{i}$, the dimensions $d_{i}=\dim_{\R}\fr{m}_i$ and the so-called {\it structure constants   of $G/K$ with respect to the   decomposition {\rm (\ref{mmm})}}, 
\[
c_{ij}^{k}\equiv{k\brack {i \ j}}=\sum_{\al, \be, \gamma}B([X_{\al}, Y_{\be}], Z_{\gamma})^{2}\,,\quad i, j, k\in\{1, \ldots, r\},
\]
 where $\{X_{\al}\}$, $\{Y_{\be}\}$, $\{Z_{\gamma}\}$  are {\it $B$-orthonormal bases} of $\fr{m}_i$, $\fr{m}_j$, $\fr{m}_k$, respectively.   These non-negative  quantities were introduced in \cite{WZ86} and they have a long tradition  in the theory of  (compact) homogeneous Einstein spaces,  see for example \cite{ParkSak97, Sak99, Bohm04, BWZ04, CS1}. Following \cite{Gr13}, next we shall refer to rational polynomials depending on some real variables  $x_1, x_1^{-1}, \ldots, x_{m}, x_{m}^{-1}$ for some  positive integer $m$, by the term {\it Laurent polynomials}. If such a rational polynomial is homogeneous, then  it will be called a {\it homogeneous Laurent polynomial}.  Now, as a conclusion   of most general results presented by \cite{WZ86, ParkSak97}, one obtains the following
 \begin{prop}\label{Ricc}{\textnormal{(\cite{WZ86, ParkSak97})}}   Let $M=G/K$ be a flag manifold of a compact simple Lie group $G$. Then,\\
     1) The components $\ric_{k}$ of the Ricci tensor $\Ric_{g}$ corresponding to  $g=\sum_{i=1}^{r}x_{i}\cdot B|_{\fr{m}_{i}}\in\M^G$  are     homogeneous Laurent polynomials  in  $x_1, x_1^{-1}, \ldots, x_{r}, x_{r}^{-1}$  of degree -1, given by
\[
\ric_{k}=\frac{1}{2x_{k}}+\frac{1}{4d_{k}}\sum_{i, j=1}^{r}\frac{x_{k}}{x_{i}x_{j}}  {k \brack i\ j}-\frac{1}{2d_{k}}\sum_{i, j=1}^{r}\frac{x_{j}}{x_{k}x_{i}}   {j \brack k\ i} \,, \qquad (k=1, \ldots, r)\,.
\]
 2) The scalar curvature $\Scal_{g}=\sum_{i=1}^{r}d_{i}\cdot\ric_{i}$ corresponding to $g=\sum_{i=1}^{r}x_{i}\cdot B|_{\fr{m}_{i}}\in\M^G$ is a  homogeneous Laurent polynomial in $x_1, x_1^{-1}, \ldots, x_{r}, x_{r}^{-1}$  of degree -1,  given by
 \[
\Scal_{g} =\frac{1}{2}\sum_{i=1}^{r}\frac{d_{i}}{x_{i}}-\frac{1}{4}\sum_{i, j, m=1}^{r} {m  \brack i\ j}\frac{x_{m}}{x_{i}x_{j}}\,.
\]
Note that $\Scal_{g} : M\to\R$ is a constant function.
 \end{prop}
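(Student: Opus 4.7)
The plan is to derive the formulas by specialising the general expression for the Ricci tensor of a reductive homogeneous space to our diagonal setting, exploiting the monotypic decomposition guaranteed by Lemma \ref{monotypic}. More concretely, I would proceed as follows.

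First I would record the general Ricci formula on a compact reductive homogeneous space $(G/K,g)$ with $\fr{g}=\fr{k}\oplus\fr{m}$ and $B=-B_{\fr{g}}$. Fix a $B$-orthonormal basis $\{e_a\}$ of $\fr{m}$ adapted to the decomposition (\ref{mmm}) and let $L:\fr{m}\to\fr{m}$ be the $B$-selfadjoint, $\Ad(K)$-invariant, positive-definite endomorphism associated to $g$ via $\langle X,Y\rangle=B(LX,Y)$. Because the summands $\fr{m}_1,\ldots,\fr{m}_r$ are pairwise inequivalent and irreducible, Schur's lemma forces $L=\sum_{i}x_i\cdot \mathrm{pr}_{\fr{m}_i}$ and, by the same argument applied to $\Ric_g$, one gets $\Ric_g=\sum_i (x_i\cdot\ric_i)\,B|_{\fr{m}_i}$, so it suffices to compute the trace of $\Ric_g|_{\fr{m}_k}$ with respect to $B|_{\fr{m}_k}$ and divide by $d_k$.

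The key input is the standard identity (see e.g.\ Besse or Wang--Ziller) that for any $X\in\fr{m}$
\[
\Ric_g(X,X)=-\tfrac12\sum_a \langle[X,e_a]_{\fr{m}},[X,e_a]_{\fr{m}}\rangle^{*}
-\tfrac12 B([X,Z],X)+\tfrac14\sum_{a,b}\langle[e_a,e_b]_{\fr{m}},X\rangle^{2}_{*},
\]
where $\langle\ ,\ \rangle^{*}$ denotes the induced dual, and $Z\in\fr{k}$ is the mean-curvature vector (which vanishes here because each $\fr{m}_i$ is $B$-orthogonal to $\fr{k}$ and unimodularity of the semisimple $G$ kills the boundary term). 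Plugging in $L=\mathrm{diag}(x_i)$ rewrites every inner product $\langle\ ,\ \rangle$ and its dual in terms of the scalars $x_i$. Then, summing over an orthonormal basis of $\fr{m}_k$ and replacing the resulting sums of $B([e_\alpha,e_\beta],e_\gamma)^2$ by the structure constants ${k\brack i\ j}$, $\tfrac{1}{2}B(X,X)/x_k$ yields the term $\tfrac{1}{2x_k}$, the $[e_a,e_b]_{\fr{m}}$ part produces $\tfrac{1}{4d_k}\sum_{i,j}\tfrac{x_k}{x_ix_j}{k\brack i\ j}$, and the $[X,e_a]_{\fr{m}}$ part produces $-\tfrac{1}{2d_k}\sum_{i,j}\tfrac{x_j}{x_kx_i}{j\brack k\ i}$. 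This gives part 1.

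For part 2, I would simply multiply $\ric_k$ by $d_k$ and sum over $k$. The first term contributes $\tfrac12\sum_i d_i/x_i$. In the double-sum contribution from $\sum_k d_k\ric_k$, the symmetries ${k\brack i\ j}={k\brack j\ i}$ (obvious) and the cyclic identity ${k\brack i\ j}={i\brack j\ k}={j\brack k\ i}$ (which follows from the $\Ad$-invariance of $B$ on $\fr{g}$) cause the two triple sums to combine into a single term $-\tfrac14\sum_{i,j,m}{m\brack i\ j}\tfrac{x_m}{x_ix_j}$, yielding the stated expression for $\Scal_g$. Finally, constancy of $\Scal_g$ on $M$ is immediate from $G$-invariance and transitivity. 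The homogeneity (degree $-1$) and Laurent-polynomial character of both $\ric_k$ and $\Scal_g$ are visible from the closed forms. The main obstacle is bookkeeping: correctly tracking the cyclic symmetry of $c_{ij}^k$ so that the two distinct-looking sums in $\ric_k$ combine into the single clean triple sum in $\Scal_g$; once that is handled, everything reduces to linear algebra on finite-dimensional $\Ad(K)$-modules.
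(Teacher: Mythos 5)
Your derivation is correct and is exactly the argument of the sources the paper cites for this proposition (\cite{WZ86, ParkSak97}): the paper itself gives no proof but attributes the formulas to Wang--Ziller and Park--Sakane, and your route --- specialise the general Ricci formula for a compact reductive homogeneous space with vanishing mean-curvature vector to a diagonal metric, trace over a $B$-orthonormal basis of $\fr{m}_k$, and use the full symmetry of $c_{ij}^{k}$ to collapse the two triple sums in $\Scal_g$ --- is the standard one. The only blemish is that your displayed general formula omits the Killing-form term $+\tfrac12 B(X,X)$ (while mislabelling the mean-curvature term), even though you then correctly use that term to produce $\tfrac{1}{2x_k}$; this is a bookkeeping slip in the display, not a gap in the argument.
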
 
 
 
 \section{The  homogeneous Ricci flow on flag manifolds}\label{ancient}
In this section we fix a flag manifold  $M=G/K$  of a compact, simply connected, simple Lie group $G$, whose isotropy representation $\fr{m}$ decomposes as in (\ref{mmm}). 
 Since  $\M^G$  endowed with the $L^2$-metric coincides with the open convex cone $\R_{+}^r$,  for the study of the Ricci flow, as an initial invariant metric  we  fix the general invariant metric $g=\sum_{i=1}^{r}x_{i}\cdot B|_{\fr{m}_{i}}$; This  often  will be simply denoted  by $g=(x_1, \ldots, x_{r})\in\R^{r}_{+}\cong\M^{G}$.  Then, the Ricci flow equation (\ref{flow1}) with initial condition $g(0)=g$ descents to the following  system of ODEs (in component form):
\begin{equation}\label{homflow1}
\Big\{\dot{x}_k=-2x_{k}\cdot\ric_{k}(x_1, \ldots, x_r)\,, \quad  1\leq k\leq r\Big\}.
\end{equation}
Since  $g\in\M^{G}$,  and any  invariant metric  evolves  under the Ricci flow again to a $G$-invariant metric,   every solution of the homogeneous   Ricci flow needs to be of the form 
\[
g(t)=\sum_{i=1}^{r}x_{i}(t)\cdot B|_{\fr{m}_{i}}, \quad g(0)=g=\langle \ , \ \rangle\,,
\]
where the smooth  functions $x_{i}(t)$ are   positive on the same maximal interval  $t\in(t_a, t_b)$ for which  $g(t)$ is defined. Usually, solutions have a maximal interval  of definition $(t_a, t_b)$, where  $0\in(t_a, t_b)$, $-\infty\leq t_{a}<0$ and $t_{b}\geq+\infty$. Next we are mainly interested in {\it ancient solutions}.
\subsection{Invariant ancient solutions} Recall that (see for example \cite{Buz, Laf15})
\begin{definition}
 A solution $g(t)$ of (\ref{homflow1}) which is defined on an interval of the  form    $(-\infty, t_{b})$ with $t_{b}<+\infty$, is called {\it  ancient}.
\end{definition}
Note that ancient solutions typically arise as singularity models of the Ricci flow and it is well-known that all ancient solutions have non-negative scalar curvature, see for instance \cite[Cor.~2.5]{Ben06}.
We first proceed    with the following
\begin{lemma}\label{nofixed}
 The homogeneous Ricci flow  {\rm (\ref{homflow1})} on a flag manifold $M=G/K$, does not possess   fixed points  in $\M^G\cong\R_+^r$. In other words, considering 
 the associated flow to {\rm (\ref{homflow1})}, i.e. the map
 \[
 \psi_{t}: \M^G\to\M^G\,,\quad g\longmapsto g(t)
 \]
 where $t\in(-\epsilon, \epsilon)$ for some $\epsilon\in(0, \infty)$, there is no $g\in\M^{G}\cong\R^{r}_{+}$ such that  $\psi_{t}(g)=g$ for all $t$. 
 \end{lemma}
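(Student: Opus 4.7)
The plan is straightforward: translate the statement about fixed points of the flow into a statement about zeros of the right-hand side of the system \eqref{homflow1}, and then apply the Alekseevsky--Kimel'fel'd theorem together with the topology of flag manifolds.

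First, I would observe that a fixed point $g\in\M^G$ of the flow $\psi_t$ is, by definition, a zero of the vector field driving the ODE system \eqref{homflow1}. Writing $g=(x_1,\ldots,x_r)$ with $x_k>0$ for all $k$, the condition $\psi_t(g)=g$ for all admissible $t$ is equivalent to $\dot{x}_k=-2x_k\cdot\ric_k(x_1,\ldots,x_r)=0$ for every $k=1,\ldots,r$. Since we are working on the open cone $\R_+^r$, each $x_k$ is strictly positive, so this forces $\ric_k=0$ for every $k=1,\ldots,r$. By Proposition \ref{Ricc} (or more directly by the fact that $\Ric_g$ is a diagonal $\Ad(K)$-invariant bilinear form with components $y_k=x_k\cdot\ric_k$), this is equivalent to $\Ric_g\equiv 0$. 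Thus any fixed point of the homogeneous Ricci flow on a flag manifold $M=G/K$ would necessarily be a $G$-invariant Ricci-flat metric.

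Next, I would invoke the theorem of Alekseevsky and Kimel'fel'd \cite{AK75}, already quoted in the Introduction, which asserts that any invariant Ricci-flat metric on a homogeneous space of a compact (semi)simple Lie group must be flat. Hence such a metric $g$ would endow $M=G/K$ with a $G$-invariant flat Riemannian structure. To rule this out, I would recall that a flag manifold $M=G/K$ of a compact simple Lie group $G$ is compact and simply connected (indeed $K=C(S)$ is connected and contains a maximal torus, so the long exact sequence of the fibration $K\hookrightarrow G\to G/K$ yields $\pi_1(G/K)=0$), and of positive dimension. But a compact, simply connected, flat Riemannian manifold is necessarily a point: by the Bieberbach theorem its universal cover is isometric to Euclidean space $\R^n$, and simple connectedness forces $M\cong\R^n$, which is incompatible with compactness unless $n=0$. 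This contradicts $\dim M=\dim\fr{m}\geq 1$.

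Combining the two steps above, no $g\in\M^G\cong\R^r_+$ can satisfy $\psi_t(g)=g$, which is the desired conclusion. The only conceivable obstacle is a degenerate case where $\fr{m}=0$, but this does not occur for a genuine flag manifold (by Proposition \ref{betti} the number $r$ of isotropy summands is at least $b_2(M)\geq 1$, and in the isotropy-irreducible Hermitian symmetric case $r=1$ still $\dim\fr{m}>0$). Consequently, the argument applies uniformly to every flag manifold $M=G/K$ of a compact simple Lie group, and the lemma follows.
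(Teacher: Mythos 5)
Your proposal is correct and follows essentially the same route as the paper's proof: identify fixed points of the flow with invariant Ricci-flat metrics, invoke the Alekseevsky--Kimel'fel'd theorem to conclude such a metric would be flat, and derive a contradiction from the topology of the flag manifold (the paper phrases the final contradiction as ``$M$ must be a torus,'' while you argue via simple connectedness that a compact simply connected flat manifold is a point, but this is the same obstruction). The extra details you supply on the coordinate form of the fixed-point condition and on $\pi_1(G/K)=0$ are consistent with the paper and do not change the argument.
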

 \begin{proof}
Obviously, fixed points of the homogeneous Ricci flow  need to correspond to {\it invariant Ricci-flat metrics}, and conversely. Since $M=G/K$ is compact, according to  Alekseevsky and Kimel'fel'd  \cite{AK75} such a metric must be necessarily flat.
But then $M$ must be a torus, a contradiction.
\end{proof}
However, the system (\ref{homflow1})  can admit more general solutions, different than   stationary points.   Indeed,  any flag manifold  $M^n=G/K$   is compact and simply connected, and hence its universal covering is not diffeomorphic  with an  Euclidean space. Hence, by maximum principle, or by   \cite[Cor.~4.3]{Laf15}  it follows that
\begin{prop}\textnormal{(\cite{Laf15})} \label{sscal}
For any flag manifold $M=G/K$ of a compact,  simply connected, simple Lie group $G$, there exists a $G$-invariant metric $g$ such that any Ricci flow solution $g(t)$ with  initial condition  $g(0)=g$,  has an interval
of definition of the form $(t_{a}, t_{b})$, with $-\infty\leq t_{a}<0$ and $t_{b}<\infty$. 
\end{prop}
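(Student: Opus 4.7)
The plan is to exhibit one explicit invariant metric whose forward Ricci flow has finite extinction time, thereby establishing the existence claim. Since $M = G/K$ is a flag manifold of a compact, simply connected, simple Lie group, a classical structural theorem produces an invariant K\"ahler--Einstein metric $g_0$ on $M$, associated to any of the finitely many invariant complex structures via the corresponding Koszul form (see \cite{AP, Chry210}). Write $\Ric_{g_0} = \lambda\, g_0$ for the Einstein equation.

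I would then verify that $\lambda > 0$. Every flag manifold is Fano, because the first Chern class of any invariant complex structure is represented by a positive multiple of the K\"ahler class of $g_0$; this forces the Einstein constant to be strictly positive. Equivalently, for any invariant Einstein metric on a flag manifold of a compact simple $G$, the constant $\lambda$ must be positive: the case $\lambda = 0$ is excluded by Alekseevsky--Kimel'fel'd \cite{AK75} (which would force $M$ to be flat, hence a torus, contradicting the transitive action of semisimple $G$), while $\lambda < 0$ is ruled out by Bochner's vanishing theorem for Killing fields (which would prohibit the transitive isometric action of the positive-dimensional $G$).

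With $\lambda > 0$ in hand, the homothetic family
\begin{equation*}
g(t) = (1 - 2\lambda t)\, g_0
\end{equation*}
is the unique bounded-curvature Ricci flow solution with $g(0) = g_0$: indeed $\partial_t g(t) = -2\lambda\, g_0$ and $\Ric_{g(t)} = \Ric_{g_0} = \lambda\, g_0$ by scale invariance of the Ricci tensor, so $-2\Ric_{g(t)} = -2\lambda\, g_0 = \partial_t g(t)$, and the uniqueness statement for invariant bounded-curvature solutions (recalled just above this proposition) applies. The maximal interval of definition is $(-\infty,\, 1/(2\lambda))$; setting $t_b := 1/(2\lambda) < \infty$ and $t_a := -\infty$ yields the inequalities $-\infty \leq t_a < 0 < t_b < \infty$ demanded by the proposition, so the K\"ahler--Einstein metric itself is the desired $g$.

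There is no serious obstacle once an invariant Einstein metric of positive constant is on the table, since the entire argument collapses into a homothetic rescaling calculation. If one wishes to bypass the K\"ahler--Einstein input altogether, an intrinsic alternative proceeds via the scalar curvature maximum principle: the evolution $\partial_t \Scal = \Delta \Scal + 2|\Ric|^2 \geq \Delta\Scal + \tfrac{2}{n}\Scal^2$ forces $\Scal_{\min}(t)$ to blow up in finite positive time whenever $\Scal_{\min}(0) > 0$, and invariant metrics $g$ on $M$ with $\Scal_g > 0$ always exist (readable, for instance, from Proposition \ref{Ricc} at suitable parameters $x_i$, and ultimately from the Fano property of $M$). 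This route would avoid any appeal to K\"ahler geometry while still producing a finite $t_b$.
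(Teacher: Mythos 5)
Your proposal is correct, but it proves the statement by a different (and more constructive) route than the paper. The paper does not really prove Proposition \ref{sscal} at all: it observes that $M=G/K$ is compact and simply connected, hence its universal cover is not diffeomorphic to $\R^n$, and then invokes the general result of Lafuente \cite[Cor.~4.3]{Laf15} (or, as the paper remarks, the maximum principle for the scalar curvature) to conclude that some invariant metric has finite forward extinction time. You instead exhibit the metric explicitly: you take the invariant K\"ahler--Einstein metric $g_0$, argue $\lambda>0$ (correctly, via Alekseevsky--Kimel'fel'd to exclude $\lambda=0$ and Bochner to exclude $\lambda<0$), and write down the shrinking homothetic solution $g(t)=(1-2\lambda t)g_0$ on $(-\infty,1/(2\lambda))$, with uniqueness of bounded-curvature solutions guaranteeing that this is \emph{the} solution with that initial datum. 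This is precisely the content of the paper's subsequent Proposition \ref{ak}, so in effect you derive Proposition \ref{sscal} as a corollary of Proposition \ref{ak} rather than from Lafuente's theorem. What each approach buys: the citation route is more general (it needs no Einstein metric, only that the universal cover is not Euclidean, which is why the paper emphasizes that for general compact homogeneous spaces the predicted solutions can be hard to specify), whereas your route gives an explicit solution and an explicit extinction time $t_b=1/(2\lambda)$, at the cost of using the existence of an invariant K\"ahler--Einstein metric on flag manifolds. Your closing alternative via $\partial_t\Scal=\Delta\Scal+2|\Ric|^2\geq\Delta\Scal+\tfrac{2}{n}\Scal^2$ (with $\Delta\Scal=0$ in the homogeneous setting) is exactly the ``maximum principle'' option the paper mentions in passing, so nothing is missing.
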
 
In the following section, for all flag manifolds $M=G/K$ with $b_2(M)=1$  we  will construct solutions of   (\ref{homflow1}),   specify  the maximal interval of their definition $(t_{a}, t_{b})$ and study their asymptotic behaviour at the {\it infinity} of the corresponding phase space $\M^{G}$ (in terms of the Poincar\'{e} compactification, see Definition \ref{infin}). These solutions, both trivial and non-trivial, emerge from invariant Einstein metrics, in particular the trivial are   {\it shrinking type solutions}  of  (\ref{homflow1}), in terms of   \cite[p.~98]{chow} for instance.  Moreover, they  are {\it ancient} and hence the scalar curvature along such solutions  is positive, while they  extinct in finite time.  
In fact, for any flag manifold $M=G/K$ one can state the following basic result.
\begin{prop}\label{ak}
Let $M=G/K$ be a flag manifold  of a compact,  connected, simply connected, simple Lie group $G$, with $\M^{G}\cong\R^{r}_{+}$, for some $r\geq 1$. Let $g_{0}$ be any $G$-invariant Einstein metric on $M=G/K$ with Einstein constant $\lambda$ and consider the 1-parameter family $g(t)=(1-2\lambda t)g_{0}$. Then, \\
(1) $g(t)$ is an ancient solution of {\rm (\ref{homflow1})} defined on the open interval  $(-\infty, \frac{1}{2\lambda})$ with $g(t)\to 0$ as $t\to\frac{1}{2\lambda}$ (from below).  Hence, its scalar curvature $\Scal(g(t))$ is a monotonically increasing function with the same interval of definition, and satisfies
\[
\lim_{t\to-\infty}\Scal(g(t))=0\,,\quad \lim_{t\to\frac{1}{2\lambda}}\Scal(g(t))=+\infty.
\]
In particular, $\Scal(g(t))>0$ for any $t\in(-\infty, \frac{1}{2\lambda})$. \\
(2)  Similarly, the Ricci components  $\ric_{i}^{t}\equiv\ric_{i}(g(t))$ corresponding to  the solution $g(t)=(1-2\lambda t)g_{0}$, satisfy the following asymptotic properties:
\[
\lim_{t\to-\infty}\ric_{i}^{t}=0\,,\quad \lim_{t\to\frac{1}{2\lambda}}\ric_{i}^{t}=+\infty.
\]
In particular, $\ric_{i}^{t}>0$, for any $t\in(-\infty, \frac{1}{2\lambda})$.
\end{prop}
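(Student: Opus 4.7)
The plan is to exploit the elementary scaling behavior of the Ricci tensor together with the Einstein condition on $g_{0}$. First I would verify directly that $g(t)=(1-2\lambda t)g_{0}$ solves the flow. Since multiplying a Riemannian metric by a positive constant leaves the Christoffel symbols (hence the curvature tensor and the Ricci tensor) unchanged, we have $\Ric_{g(t)}=\Ric_{g_{0}}=\lambda g_{0}$, while $\partial_{t}g(t)=-2\lambda g_{0}$, so $\partial_{t}g(t)=-2\Ric_{g(t)}$ and $g(0)=g_{0}$. The maximal interval on which $g(t)$ is positive definite is exactly $\{t : 1-2\lambda t>0\}$; using that $\lambda=\Scal(g_{0})/n>0$ on any flag manifold (invariant Einstein metrics on compact flag spaces must have positive Einstein constant, since by Alekseevsky--Kimel'fel'd they cannot be Ricci-flat, and a compact Einstein manifold with $\lambda<0$ admits no non-trivial Killing fields, contradicting the transitive $G$-action), this interval is $(-\infty,T)$ with $T=1/(2\lambda)$. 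In particular $g(t)$ is ancient and $g(t)\to 0$ as $t\to T^{-}$.

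For the scalar curvature I would invoke Proposition \ref{Ricc}: $\Scal_{g}$ is a homogeneous Laurent polynomial of degree $-1$ in the parameters $x_{1},\ldots,x_{r}$, hence $\Scal(c\,g_{0})=c^{-1}\Scal(g_{0})$ for any $c>0$. Combined with $\Scal(g_{0})=\tr_{g_{0}}\Ric_{g_{0}}=n\lambda$, this yields the closed form
\[
\Scal(g(t))=\frac{n\lambda}{1-2\lambda t},\qquad t\in(-\infty,T).
\]
Differentiating gives $\frac{d}{dt}\Scal(g(t))=\frac{2n\lambda^{2}}{(1-2\lambda t)^{2}}>0$, so $\Scal(g(t))$ is strictly increasing, positive throughout, and the two limits $0$ (as $t\to-\infty$) and $+\infty$ (as $t\to T^{-}$) are immediate.

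Part (2) runs along exactly the same lines. Each Ricci component $\ric_{k}$ is also, by Proposition \ref{Ricc}, a homogeneous Laurent polynomial of degree $-1$ in $x_{1},\ldots,x_{r}$, and therefore scales as $\ric_{k}(c\,g_{0})=c^{-1}\ric_{k}(g_{0})$. Because $g_{0}$ is Einstein with constant $\lambda$, the decomposition $\Ric_{g_{0}}=\sum_{i}(x_{i}\,\ric_{i})B|_{\fr{m}_{i}}=\lambda\sum_{i}x_{i}B|_{\fr{m}_{i}}$ forces $\ric_{i}(g_{0})=\lambda$ for every $i$. Substituting the scaling gives
\[
\ric_{i}^{t}=\frac{\lambda}{1-2\lambda t},\qquad t\in(-\infty,T),
\]
from which positivity and the asserted limits follow exactly as in (1).

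The proof is essentially a two-line computation once one grants (i) that the Ricci tensor is invariant under constant rescaling of the metric and (ii) that on a flag manifold the Einstein constant $\lambda$ of any invariant Einstein metric is strictly positive. The only mildly delicate point is (ii); however this is already noted in the introduction (where the authors write $\lambda=\Scal(g_{0})/n>0$) and follows from standard facts recalled above, so I do not anticipate any real obstacle.
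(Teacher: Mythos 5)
Your proposal is correct and follows essentially the same route as the paper: verify the trivial Einstein solution directly, then use the degree $-1$ homogeneity of $\Scal$ and of the components $\ric_i$ from Proposition \ref{Ricc} to get the closed forms $\Scal(g(t))=\Scal(g_0)/(1-2\lambda t)$ and $\ric_i^t=\lambda/(1-2\lambda t)$, from which monotonicity, positivity and the limits follow. The only difference is that you spell out two steps the paper treats as standard (the scale-invariance of $\Ric$ and the positivity of $\lambda$ via Alekseevsky--Kimel'fel'd plus Bochner), which is harmless.
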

\begin{proof}
(1) Let $g_{0}$ be a $G$-invariant Einstein metric on $M=G/K$ (e.g. one can fix as $g_{0}$ an invariant K\"ahler-Einstein metric, which always exists).  Set $c(t)=(1-2\lambda t)$ and note that $c(t)>0$ for any  $t\in(-\infty, \frac{1}{2\lambda})$. Since $g(t)=c(t)g_{0}$ is a 1-parameter family of invariant metrics, we compute
\[
g(t)=\sum_{i=1}^{r}(1-2\lambda t)x_i^{0}\cdot B|_{\fr{m}_i}=\sum_{i=1}^{r}c(t)x_{i}^{0}\cdot B|_{\fr{m}_i}=\sum_{i=1}^{r}x_{i}(t)\cdot B|_{\fr{m}_i}\,,
\] 
where $x_{i}(t):=c(t)x_{i}^{0}$, for any $1\leq i\leq r$ with $x_{i}(0)=x_{i}^{0}$, where without loss of generality we assume that $g_{0}=(x_1^{0}, \ldots, x_r^{0})$ for some $x_{i}^{0}\in\R^{r}_{+}$ and some $\bb{N}\ni r\geq 1$.
  As it is well-known from the general theory of Ricci flow, and it is trivial to see, $g(t)$ is a solution of   {\rm (\ref{homflow1})} which has as interval of definition the open set $(-\infty, \frac{1}{2\lambda})$. Hence it is an ancient solution, since $0<\frac{1}{2\lambda}<+\infty$ (recall that $\lambda>0$ is the  Einstein constant of an invariant Einstein metric on a compact homogenous space), and moreover $\lim_{t\to-\infty}g(t)=+\infty$ and $\quad \lim_{t\to\frac{1}{2\lambda}}g(t)=0.$
   On the other hand, by Proposition \ref{Ricc}, the scalar curvature $\Scal_{g_{0}}\equiv\Scal(g_{0})$ is a homogeneous Laurent  polynomial of degree $-1$. Hence,  
\[
\Scal(c(t)g_{0})=c(t)^{-1}\Scal(g_{0})=\frac{1}{(1-2\lambda t)}\Scal(g_{0})\,.
\]
Consequently $\Scal(g(t))=\frac{1}{(1-2\lambda t)}\Scal(g_{0})>0$, for any $t\in(-\infty, \frac{1}{2\lambda})$, since    $\Scal(g_{0})>0$. Obviously, 
\[
\Scal'(g(t))=\frac{2\lambda\Scal(g_{0})}{(1-2\lambda t)^{2}}>0,
\]
 for any $(-\infty, \frac{1}{2\lambda})$ and when $t$ tends to $\frac{1}{2\lambda}$ from below, we see that $\Scal(g(t))\to+\infty$. Since $\Scal(g(t))$, as  a smooth function of $t$, is defined only  for $t\in(-\infty, \frac{1}{2\lambda})$, we conclude. Moreover, for  $t\to-\infty$ we get $\Scal(g(t))\to0$. \\
 (2) Similarly, by Proposition \ref{Ricc}, the components $\ric_{i}\equiv\ric_{i}(g_{0})=\ric_{i}^{0}$ $(1\leq i\leq r)$ of the Ricci tensor of $g_{0}$ are homogeneous Laurent polynomials of degree -1. Hence,
\[
\ric_{i}(c(t)g_{0})=\frac{1}{c(t)}\ric_{i}(g_{0})=\frac{1}{c(t)}\ric_{i}^{0}\,,
\]
i.e. $\ric_{i}(g(t))=c(t)^{-1}\ric_{i}^{0}$. The conclusion now easily  follows, since $g_{0}$ is Einstein and so $\ric_{i}^{0}=\lambda$, which is independent of $t$, for any  $1\leq i\leq r$.
\end{proof}
\begin{remark} $(i)$  The conclusions for the asymptotic behaviour of scalar curvature for the solutions $g(t)$ verify  a more general statement for the limit behaviour of the scalar curvature of homogeneous ancient solutions, obtained by Lafuente  \cite[Thm.~1.1, (i)]{Laf15}  in terms of the so-called  {\it bracket flow}.\footnote{The solutions  $g(t)$ described above have as maximal interval of definition the open set $(-\infty, \frac{1}{2\lambda_i})$,  so  the second part of \cite[Thm.~1.1]{Laf15} does not apply in our situation.} Later, for the convenience of the reader, in Section \ref{gloglo} we will illustrate Proposition \ref{ak}  by certain examples  (see Examples \ref{s2c}, \ref{3compexa}).\\
\noindent $(ii)$ Recall  by \cite[p.~545]{chow}   that $\Ric_{t}\equiv\Ric(g(t))=\Ric(g(0))=\lambda g_{0}=\frac{\lambda}{(1-2\lambda t)}g(t)$. Thus, the trivial solutions $g(t)$  given in Proposition \ref{ak}, being homothetic to invariant Einstein metrics, they also satisfy the Einstein equation (and therefore the Ricci flow equation), and hence $\lim_{t\to-\infty}\Ric_{t}=\lambda g_{0}=\lim_{t\to(\frac{1}{2\lambda})^{-}}\Ric_{t}$.  
The $\Ad(K)$-invariant $g(t)$-self-adjoint operator $\mathsf{r}_{g(t)}\equiv\mathsf{r}_{t} : \fr{m}\to\fr{m}$  (Ricci endomorphism) corresponding to the Ricci tensor $\Ric_{t}$ is defined by  $
 \Ric_{t}(X, Y)=g_{t}(\mathsf{r}_{t}(X), Y)$ for any $X, Y\in\fr{m}$. It satisfies the relation $\mathsf{r}_{t}=\frac{\lambda}{c(t)}A_{t}=\lambda A_{0}=\mathsf{r}_{0}$, for any  $t\in(-\infty, \frac{1}{2\lambda})$,
 where $A_{0}=\sum_{i=1}^{r}x_{i}^{0}\cdot\Id_{\fr{m}_i}$ is the  positive definite $\Ad(K)$-invariant $g_{0}$-self-adjoint operator  corresponding to the diagonal metric $g_{0}$. The  endomorphism of the 1-parameter family  $g(t)=c(t)g_{0}$, given by  $A_{t}=c(t)A_{0}$, is also  positive definite  for any $t\in(-\infty, \frac{1}{2\lambda})$, and the same satisfies $\mathsf{r}_{t}$. \\
\noindent $(iii)$ Obviously, Proposition \ref{ak} and the above conclusions can be extended to any homogeneous space $G/K$ of a compact semisimple Lie group $G$ modulo a compact subgroup $K\subset G$, with  a monotypic isotropy representation admitting an invariant Einstein metric $g_{0}$.  A simple  example is given below.  On the other side, there are examples of effective compact homogeneous spaces,  with non-monotypic isotropy representation, i.e. $\fr{m}_{i}\cong\fr{m}_{j}$ for some $1\leq i\neq j\leq r$, for which the invariant (Einstein) metrics are still diagonal. To take a taste, consider the Stiefel manifold $V_{2}(\R^{\ell+1})=G/K=\SO(\ell+1)/\SO(\ell-1)$ and assume for simplicity that  $\ell\neq 3$.  This is a compact homogeneous space, admitting  a $\U(1)$-fibration over the Grassmannian  ${\rm Gr}_{2}^{+}(\R^{\ell+1})=\SO(\ell+1)/\SO(\ell-1)\times\SO(2)$. Let $\fr{so}(\ell+1)=\fr{so}(\ell-1)\oplus\fr{m}$ be  a $B$-orthogonal reductive decomposition. Then, it is not hard to see that $\fr{m}=\fr{m}_0\oplus\fr{m}_1\oplus\fr{m}_2$, where $\fr{m}_{0}$ is 1-dimensional and $\fr{m}_1\cong\fr{m}_2$ are two irreducible  submodules of dimension $\ell-1$, both isomorphic to the standard representation of $\SO(\ell-1)$.  Hence, the isotropy representation of $V_{2}(\R^{\ell+1})$ is {\it not} monotypic.  However, the invariant metrics on  $V_{2}(\R^{\ell+1})$ can be shown that are still diagonal. 	This is based on the action of the {\it generalized Weyl group} (gauge group) $N_{G}(K)/K$ on the space $P(\fr{m})^{\Ad(K)}$ of $\Ad(K)$-invariant inner products on $\fr{m}$ (see for example \cite{NRS, marcus}). For the specific case of  $V_{2}(\R^{\ell+1})$, the group $N_{G}(K)/K$ is isomorphic to a circle and   this action was used in \cite[p.~121]{Kerr98} to eliminate the off-diagonal components of the invariant metrics  (note that for $\ell\neq 3$, $V_{2}(\R^{\ell+1})$ admits a unique $\SO(2\ell+1)$-invariant Einstein metric).  Hence, the results discussed  above can also be extended in that more general case.
\end{remark}
\begin{example}\label{isoire}
Let $M=G/K$ be an isotropy irreducible homogeneous space of a compact simple Lie group $G$. Consider a $B$-orthogonal reductive decomposition $\fr{g}=\fr{k}\oplus\fr{m}$.  Then, $\M^{G}=\R_{+}$ and the Killing form $g_{B}=B|_{\fr{m}}$
is the unique invariant Einstein metric (up to a scalar). Hence, $g(t)=(1-2\lambda_{B}t)g_{B}$ is a (trivial) homogeneous ancient solution of the corresponding homogeneous Ricci flow, defined on the open interval $(-\infty, \frac{1}{2\lambda_B})$, where $\lambda_{B}>0$ is the Einstein constant of $g_{B}$. This applies in particular to any symmetric flag manifold $M=G/K$ of a compact simple Lie group $G$ (i.e. a compact isotropy irreducible HSS where $r=1$).
\end{example}
\begin{definition}A homogeneous ancient solution of (\ref{homflow1}) is called {\it non-collapsed},  if the corresponding curvature
normalized metrics have a uniform lower injectivity radius bound.
\end{definition}

Non-collapsed homogeneous ancient solutions of the Ricci flow on compact homogeneous space  have been recently studied in \cite{BLM17}. In this work, among other results,   the authors  proved that:
\begin{itemize}
\item  $(\upalpha)$  If $G, K$ are connected and does not exist some intermediate group $K\subset L\subset G$ such that $L/K$ is a torus, i.e. if $G/K$ is {\it not} a homogeneous torus bundle $G/K\to G/L$ over $G/L$,  then any ancient solution on $G/K$ is non-collapsed (\cite[Rem~5.3]{BLM17}).
\item $(\upbeta)$ Non-trivial  homogeneous ancient solutions of Ricci flow must develop a {\it Type I singularity}
close to their extinction time  and also to the past (see \cite{BLM17},  Corollary 2, page 2, and pages 24-25).
\end{itemize}
 Hence we deduce that
\begin{corol}\label{noncollapsed}
Let $M=G/K$ be a flag manifold as in Proposition \ref{ak}.   Then any non-trivial ancient solution of the homogeneous Ricci flow, if it exists,  emanates from an invariant Einstein metric, is non-collapsed and develops a Type I singularity close to the  extinction time (and also to the past). In particular, the trivial ancient solutions of the form $g(t)=(1-2\lambda t)g_0$, where $g_0 \in \mathcal{M}^G$ is an invariant Einstein metric on $M$ with Einstein constant $\lambda$, are non-collapsed and as $t\rightarrow \frac{1}{2\lambda}$ the volume of $M=G/K$  with respect to $g(t)$ tends to $0$, 
\[
Vol_{g(t)}(G/K)\rightarrow 0,
\]
 i.e., $M=G/K$ shrinks to a point in finite time.
\end{corol}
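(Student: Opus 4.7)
The plan is to combine items $(\upalpha)$ and $(\upbeta)$ of \cite{BLM17} recalled just above, after verifying the hypothesis of $(\upalpha)$ on a flag manifold. The crucial observation is that $G/K$ cannot be a non-trivial homogeneous torus bundle: since $M=G/K$ has $\operatorname{rk} K=\operatorname{rk} G$, its Euler characteristic equals $|W_G|/|W_K|>0$, whereas the multiplicativity of the Euler characteristic in fiber bundles of compact Lie groups, $\chi(G/K)=\chi(L/K)\,\chi(G/L)$, would force $\chi(G/K)=0$ whenever the fiber $L/K$ is a torus of positive dimension. This rules out any intermediate $K\subsetneq L\subsetneq G$ with $L/K$ a torus, so $(\upalpha)$ applies and every homogeneous ancient solution on $M$ is automatically non-collapsed; in particular this covers the trivial family $g(t)=(1-2\lambda t)g_0$ from Proposition \ref{ak}.

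With non-collapsing secured, $(\upbeta)$ yields verbatim the Type I behaviour at both ends of the maximal interval of definition for any non-trivial ancient solution. To confirm that such a solution necessarily emanates from an invariant Einstein metric, I would argue as follows: non-collapsing together with the Type I bound makes the curvature-normalized flow precompact in the finite-dimensional moduli space $\M^G\cong\R^r_+$ of invariant metrics (which is available thanks to the monotypic isotropy representation of Lemma \ref{monotypic}); the past $\alpha$-limit of the rescaled trajectory is then a fixed point of the normalized Ricci flow on $\M^G$, i.e.\ an invariant Einstein metric on $M$, by the $\alpha$-limit analysis of \cite{Bohm15,BLM17}.

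For the trivial solutions the volume assertion is a direct computation. With $n=\dim M$, the factor $(1-2\lambda t)$ scales the volume density by $(1-2\lambda t)^{n/2}$, so
\[
\operatorname{Vol}_{g(t)}(G/K)=(1-2\lambda t)^{n/2}\operatorname{Vol}_{g_0}(G/K)\longrightarrow 0 \quad\textrm{as}\quad t\nearrow \tfrac{1}{2\lambda}\,,
\]
which exhibits the claimed shrinking in finite time. The delicate step in the whole argument is the $\alpha$-limit identification in the second paragraph, which rests on the (non-trivial) blow-down analysis of \cite{BLM17,Bohm15}; the non-collapsing and the Type I statements themselves are essentially direct citations once the Euler-characteristic obstruction to being a torus bundle has been worked out.
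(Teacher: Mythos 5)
Your proof is correct and follows the same overall strategy as the paper: verify that $G/K$ cannot fiber as a homogeneous torus bundle, then invoke $(\upalpha)$ for non-collapsing and $(\upbeta)$ for the Type I behaviour. The one place where you diverge is in that verification. The paper argues directly with ranks: an intermediate $K\subset L\subset G$ with $L/K$ a positive-dimensional torus would force $\rnk L>\rnk K$, which is impossible since $\rnk K=\rnk G$ already pins down $\rnk L=\rnk K$. You instead use the Hopf--Samelson formula $\chi(G/K)=|W_G|/|W_K|>0$ together with multiplicativity of the Euler characteristic in the fibration $L/K\to G/K\to G/L$, which would give $\chi(G/K)=0$ if the fiber were a torus. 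Both arguments are valid and both ultimately rest on the equal-rank property of flag manifolds; the paper's is the more elementary of the two, while yours is a standard topological reformulation. Your additional details --- the explicit blow-down/$\alpha$-limit sketch for why a non-trivial ancient solution emanates from an invariant Einstein metric, and the computation $\operatorname{Vol}_{g(t)}(G/K)=(1-2\lambda t)^{n/2}\operatorname{Vol}_{g_0}(G/K)\to 0$ --- are consistent with what the paper leaves implicit (it simply attributes these claims to $(\upbeta)$ and the cited results of B\"ohm--Lafuente--Simon), and the volume formula is exactly the right justification for the final assertion.
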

\begin{proof}
For the record, let us assume that there exists some intermediate closed subgroup $K\subset L\subset G$  such that $L/K$ is torus $T^{s}$ for some $s\geq 1$. Then obviously, it must be $\rnk L>\rnk K$. But $\rnk K=\rnk G$ and  then the inclusion  $L\subset G$ gives a contradiction.  Hence, $M=G/K$ cannot be homogeneous torus bundle.  Thus,  according to  $(\upalpha)$ any  possible homogeneous ancient solution  of (\ref{homflow1}) on $M=G/K$ must be non-collapsed. The other claims and the assertions for Type I behaviour, follow now by $(\upbeta)$.
\end{proof}

\begin{remark}
Recall that on a compact homogeneous space $M=G/K$ the total scalar curvature functional 
\[
\bold{S}(g)=\displaystyle\int_{M}\Scal(g)dV_{g},
\]
 restricted on the  set $\M_1^{G}$ of $G$-invariant metrics of volume 1, coincides with the smooth function $\M^{G}_{1}\ni g\longmapsto\Scal(g)\equiv\Scal_{g}$, since the scalar curvature $\Scal_{g}$ of $g$ is a constant function on $M$.  In this case, $G$-invariant Einstein metrics of volume 1 on $G/K$  are precisely   the   critical points of the restriction $\bold{S}|_{\M^G_1}$.  A $G$-invariant Einstein  metric is called {\it unstable} if is not a {\it local maximum} of $\bold{S}|_{\M^G_1}$. 
By \cite[Lem.~5.4]{BLM17} it also known that {\it for any unstable homogeneous Einstein metric  on a compact homogeneous
space $G/K$, there exists a non-collapsed invariant ancient solution emanating from it}. For general flag manifolds, up to our knowledge,  is an open question  if all  possible existent invariant Einstein metrics are unstable or not.
However, by  \cite[Thm.~1.2]{Chry211} it is known that on a flag manifold with $r=2$, i.e. with two isotropy summands, there exist two non-isometric invariant Einstein metrics which are both local minima of $\bold{S}|_{\M^G_1}$, and hence unstable in the above sense. Thus, for  $r=2$ the existence of non-collapsed ancient solutions of  system (\ref{homflow1}) can be also obtained by a direct combination of the results in \cite{Chry211, BLM17}.
\end{remark}


\subsection{The Poincar\'{e} compactification procedure}  \label{poinc}
To study the  {\it asymptotic behaviour} of  homogeneous Ricci flow solutions,  even of more general abstract solutions than these given in Proposition \ref{ak},   one  can successfully use the  compactification method of Poincar\'{e} which we refresh below, adapted to our setting (see also \cite{Grama09, AC11, Grama12}). Indeed,  the Poincar\'{e} compactification procedure is used to study the behaviour of a polynomial system of ordinary differential equations in a neighbourhood of infinity, see \cite{Gon, Vidal} for an explicit description.  We describe it here, in a form suitable for our purposes.  

Let $(M=G/K, g)$ be a flag manifold with $\M^{G}\cong\R^r_{+}$ for some $r\geq  1$.   By using the expressions of Proposition \ref{Ricc} and    after multiplying the right-hand side of the equations in (\ref{homflow1}),   with a suitable  {\it positive factor},  we obtain a qualitative equivalent  dynamical system consisting of homogeneous polynomial   equations of positive degree.  Let us denote this  system obtained from  the homogeneous Ricci flow via  this procedure, by
\begin{equation}\label{PRF}
\Big\{\dot{x}_k=\RF_k(x_1,\ldots, x_r) :  k=1,\ldots, r\Big\}.
\end{equation}  
So, for any  $1\leq k\leq r$, $\RF_k(x_1,..,x_r)$ are homogeneous polynomials, whose {\it maximum degree is defined to be the degree $d$ of the system {\rm(\ref{PRF})}}. Let us proceed with the following definition.
\begin{definition}
The vector field associated to the system (\ref{PRF}) will be denoted by $
X(x_1,..,x_r):=\big(\RF_1(x_1,\ldots, x_r),\ldots,  \RF_r(x_1,\dots, x_r)\big)$ and referred to as the {\it homogeneous vector field associated to the homogeneous Ricci flow on $(M=G/K, g)$.} 
\end{definition}
Consider  the subset of the unit sphere $\Ss^{r}\subset\R^{r+1}$, containing all points having non-negative coordinates, i.e.
$\Ss^r_{\geq}=\Big\{y\in \R^{r+1} :  \|y\| =1,\ y_i\geq 0,\ i=1,\ldots, r+1\Big\}\subset\Ss^r$.
It  is also convenient to identify $\R^{r}_{\geq}$ with the subset of $\R^{r+1}$ defined by 
\[
\mathsf{T}:=\Big\{(y_1,\ldots, y_r, y_{r+1})\in\R^{r+1} : y_{r+1}=1, \ y_i\geq 0,\ \forall \ i=1, \ldots, r\Big\}\cong\R^{r}_{\geq}\,.
\]
Consider  now the  {\it central projection} $
f :  \R^{r}_{\geq}\cong\mathsf{T} \rightarrow \Ss^r_{\geq}$,
assigning to every $p\in\mathsf{T}$  the point $\mathsf{T}(p)\in \Ss^r_{\geq}$, defined as follows: $\mathsf{T}(p)$ is the intersection of the straight line joining the initial point $p$ with the origin of $\R^{r+1}$. The explicit form of $f$ is given by  
\[
f(y_1,\ldots, y_r,1)=\frac{1}{\|(y_1,\ldots,y_r,1)\|}(y_1,\ldots, y_r,1).
\]
Through this projection, $\R^{r}_{\geq}$ can be identified with the subset of $\Ss^r_{\geq}$ with $y_{i+1}> 0$. Moreover,  the equator of $\Ss^r_{\geq}$, that is  
\[
\Ss^{r-1}_{\geq}=\Big\{y\in \Ss^{r}_{\geq} :  y_{r+1}=0\Big\},
\]
 is identified as the {\it infinity of $\R^r_{\geq}$}. To be more explicit, let us state this as a definition.
 \begin{definition}\label{infin}
A {\it point  at infinity} of $\M^G\cong\mathbb{R}^r_{\geq}$ is understood  to  be point of the equator $\Ss^{r-1}_{\geq}$.
\end{definition}
Note that $\Ss^r_{\geq}$ is diffeomorphic with the standard $r$--dimensional simplex
\[
\Delta^{r}=\Big\{(y_1,\ldots, y_{r+1})\in \R^{r+1} :  \sum_{i}y_i=1,\  \forall \ y_i\geq 0,\ i=1,..,r+1\Big\}
\]
and thus with the so-called {\it non-negative part of the real projective space} $\RP^{r}_{\geq}=(\R^{r+1}_{\geq}\backslash\{0\})/\R_{+}$ (see \cite[Ch.~4]{Ful}). Via  push-forward, the central projection carries the vector field $X$ onto $\Ss^r_{\geq}$.  The vector field  obtained by  this procedure, i.e. the vector field
\[
p(X)(y):=y_{r+1}^{d-1}f_*X(y)
\]
is  called the {\it Poincar\'{e} compactification of   $X$}, and this  is an analytic vector field defined on all $\Ss^r_{\geq}$.
Actually, in order to perform computations with $p(X)$ one needs its expressions in a chart. Thus, consider the chart
\[
U=\Big\{y\in \Ss^r : y_1>0\Big\}
\]
of $\Ss^r$, and project it to the plane $\Big\{(y_1,\ldots, y_{r+1})\in \mathbb{R}^{r+1} :  y_1=1)\Big\}\cong\R^r_{\geq}$. Then, via  the corresponding central projection, assign  to every point of $U$ the point of intersection of the straight line joining the origin with the original point.  This second central projection   denoted by $F : U\rightarrow \R^r_{\geq }$, has obviously the form $F(y_1, y_2,\ldots, y_{r+1})=(1,\frac{y_2}{y_1},\ldots, \frac{y_{r+1}}{y_1})$. 
We can now compute the local expression of $p(X)$ in the chart $U$; As above, let us denote by $x_i$ the coordinates on $\mathbb{R}^r_{\geq}$. Then, we obtain
\begin{prop}\label{lochf}
The local expression of the Poincar\'{e} compactification $p(X)$ of the homogeneous vector field   $X$ associated to the Ricci flow on $(M=G/K, g)$, reads as
\begin{equation}\label{locexp}
\dot{x}_i = x_{r}^d(-x_i\overline{\RF}_1+\overline{\RF}_{i+1}) \ \ \text{for any} \ \  i=1,\ldots, r-1\,,\ \ \text{and} \  \
\dot{x}_{r} = x_{r}^d(-x_{r}\overline{\RF}_1)\,,
\end{equation}
where  $\overline{\RF}_i(x_1,\ldots, x_{r}):=\displaystyle\RF_i(\frac{1}{x_{r}},\frac{x_2}{x_{r}},\ldots, \frac{x_{r-1}}{x_{r}})$.\footnote{Here, we have omitted the term $\frac{1}{\|(x_1,\ldots,x_r,1)\|}$,
since it does not affect  the qualitative behaviour of the system.} 
\end{prop}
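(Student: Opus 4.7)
The plan is to derive the formulas by performing the Poincar\'{e} compactification in two steps: first writing the vector field in a chart where the central projection is trivial, and then transporting it to the chart $U=\{y_1>0\}$ via the coordinate transition. In the chart $U_{r+1}=\{y_{r+1}>0\}\subset\Ss^r$ with coordinates $u_i=y_i/y_{r+1}$, the central projection $f:\R^r\to\Ss^r$, $f(\xi)=(\xi,1)/\|(\xi,1)\|$, is literally the identity on coordinates ($u_i=\xi_i$), so the pushforward $f_{*}X$ is represented by $\dot u_i=\RF_i(u)$. Multiplying by the compactification factor $y_{r+1}^{d-1}$ gives the local expression of $p(X)$ in $U_{r+1}$ as $\dot u_i = y_{r+1}^{d-1}\RF_i(u)$, where $y_{r+1}=1/\|(u,1)\|$.

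Next I would compute the transition between the charts. Since $x_j=y_{j+1}/y_1$ and $u_i=y_i/y_{r+1}$, on the overlap $U\cap U_{r+1}$ one finds
\[
x_j=\frac{u_{j+1}}{u_1}\quad (j=1,\ldots,r-1),\qquad x_r=\frac{1}{u_1},
\]
which inverts to $u_1=1/x_r$ and $u_i=x_{i-1}/x_r$ for $i=2,\ldots,r$, i.e.\ $u=(1/x_r,x_1/x_r,\ldots,x_{r-1}/x_r)$; in particular $\RF_i(u)=\overline{\RF}_i(x)$. Applying the chain rule and using only the nonvanishing partials $\partial x_j/\partial u_{j+1}=1/u_1$, $\partial x_j/\partial u_1=-u_{j+1}/u_1^2$ (for $j\leq r-1$) and $\partial x_r/\partial u_1=-1/u_1^2$, one gets
\[
\dot x_j=\frac{y_{r+1}^{d-1}}{u_1}\bigl[\RF_{j+1}(u)-\tfrac{u_{j+1}}{u_1}\RF_1(u)\bigr],\qquad \dot x_r=-\frac{y_{r+1}^{d-1}}{u_1^{2}}\RF_1(u).
\]
Substituting $1/u_1=x_r$ and $u_{j+1}/u_1=x_j$, and observing that in the $x$-chart $y_{r+1}=x_r/\|(1,x_1,\ldots,x_r)\|$, the common factor collects to $y_{r+1}^{d-1}x_r=x_r^{d}/\|(1,x_1,\ldots,x_r)\|^{d-1}$. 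Since the scalar $1/\|(1,x_1,\ldots,x_r)\|^{d-1}$ is strictly positive on $\R^r_\geq$, it only reparametrises time and does not affect the phase portrait; dropping it (as announced in the footnote) leaves precisely the displayed formulas \eqref{locexp}.

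The only genuinely delicate point is the bookkeeping around the homogeneity: the quantities $\RF_k$ need not all share the same degree, and the compactification factor $x_r^{d}$ must be the right one so that the products $x_r^{d}\,\overline{\RF}_i(x)$ are polynomials rather than Laurent expressions. Since by construction (multiplying the original Ricci flow equations by a suitable positive common denominator) every $\RF_k$ is a homogeneous polynomial of degree $\leq d$, the substitution $u_i\mapsto(1/x_r,x_1/x_r,\ldots,x_{r-1}/x_r)$ introduces at most a factor $x_r^{-d}$, which is absorbed by the $x_r^{d}$ prefactor. Beyond this verification, the rest of the argument is a routine change of variables.
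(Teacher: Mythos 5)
Your derivation is correct and is exactly the standard chart computation that the paper leaves implicit (it states Proposition \ref{lochf} without proof, relying on the construction of $f$, $F$ and $p(X)=y_{r+1}^{d-1}f_{*}X$ given just before it): transition from the chart $U_{r+1}$, where $f_{*}X$ is $\dot u_i=\RF_i(u)$, to the chart $U$ via $u=(1/x_r,x_1/x_r,\ldots,x_{r-1}/x_r)$, apply the chain rule, and absorb the positive norm factor as in the footnote. Note that in doing so you have (correctly) read the substitution as $(1/x_r,x_1/x_r,\ldots,x_{r-1}/x_r)$, consistent with Example \ref{inf2d}; the displayed definition of $\overline{\RF}_i$ in the proposition contains an index slip ($x_2/x_r$ where $x_1/x_r$ is meant, and only $r-1$ arguments are listed), which your computation silently repairs.
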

\begin{remark}\label{fixinfinity}
In the expressions given by  (\ref{locexp}),  the factor $x_{r}^{d}$ is canceled by the dominators of the rational polynomials $\overline{\RF}_{i}(x_1,\ldots, x_{r})$. After doing so and by locating the fixed points of the resulting dynamical system, under the condition $x_{r}=0$,  we obtain the so-called   {\it fixed points  at the infinity of $\M^G\cong\R^{r}_{+}$ of the homogeneous Ricci flow on $(M=G/K, g)$.}
\end{remark}
\begin{example}\label{inf2d}
For $r=2$ the expression of $p(X)$ in the local chart $U$ is given by
\[
\dot{x}_1=x_2^d(-x_1\overline{\RF}_1+\overline{\RF}_2)\,,\quad
\dot{x}_2=x_2^d(-x_2\overline{\RF}_1)\,,
\]
where $\overline{\RF}_i(x_1, x_2)=\RF_i(\frac{1}{x_2},\frac{x_1}{x_2})$ for any $i=1, 2$. We may omit  the term $\frac{1}{\|(x_1,x_2,1)\|}$,  by using a time reparametrization. Fixed points of the homogeneous Ricci flow on $(M=G/K, g)$ at infinity of $\M^G=\R^{2}_{+}$, can be studied by setting $x_2=0$.
For $r=3$ the expression of $p(X)$ in the local chart $U$ reads by
\[
\dot{x}_1=x_3^d(-x_1\overline{\RF}_1+\overline{\RF}_2)\,,\quad
\dot{x}_2=x_3^d(-x_2\overline{\RF}_1+\overline{\RF}_3)\,,\quad
\dot{x}_3=x_3^d(-x_3\overline{\RF}_1)\,,
\]
where $\overline{\RF}_i=\RF_i(\frac{1}{x_3},\frac{x_1}{x_3},\frac{x_2}{x_3})$, for any $i=1, 2, 3$.  
 In an analogous way, fixed points at infinity  can be studied by setting $x_3=0$, while similarly are treated cases with $r>3$.
\end{example}
 
\section{Global study of HRF on flag spaces $M=G/K$ with $b_{2}(M)=1$}\label{gloglo}
We turn  now our attention to    the  global study of the classical Ricci flow equation for an initial invariant metric on (non symmetric)  flag manifolds $M=G/K$ with $b_{2}(M)=1$. Again we can work with $G$ simple. Let us begin first with a few details  about this specific class of flag spaces.


\subsection{Flag manifolds $M=G/K$ with $b_{2}(M)=1$}
According to Proposition \ref{betti},   flag manifolds $M=G/K$ of a compact simple Lie group $G$ with $b_{2}(M)=1$ are defined by painting black in the Dynkin diagram of $G$ only a simple root, i.e. $\Pi_{M}=\{a_{i_{o}}\}$ for some $a_{i_{o}}\in\Pi$.
The number of the isotropy summands of a flag space $M=G/K$ with $b_{2}(M)=1$ can be read from the PDD, at least  when we encode with it the so-called   {\it Dynkin marks} of  the simple roots; These  are the positive integers  coefficients appearing in the expression of the highest root of $G$ as a linear combination of    simple roots.  For flag manifolds with $b_{2}(M)=1$, i.e. $\Pi_{M}=\Pi/\Pi_{K}=\{a_{i_{o}} : 1\leq i_{o}\leq \ell=\rnk G\}$  we have the relation $r={\rm Dynk}(\al_{i_{o}})$ (see \cite{Chry212, CS1}),
where $r$ is the integer appearing in (\ref{mmm}). In other words, a flag manifold $M=G/K$ with $b_{2}(M)=1$ and $r$ isotropy summands is obtained    by painting black  a simple root $\al_{i_{o}}$  with Dynkin mark $r$, and conversely. Since for a compact simple Lie group $G$ the maximal Dynkin mark equals to 6 (and occurs for $G=\E_8$ only), we result with the bound $1\leq r\leq 6$. 

So, from now on assume that $M=G/K$ is a flag manifold as above with $b_2(M)=1$ and $2\leq r\leq 6$. The classification of such flag manifolds can be found in  \cite{CS1};  There, in combination with earlier results of Kimura, Arvanitoyeorgos and the second author \cite{Kim90, Chry210, Chry211}  is proved that any such space admits a finite number of non-isometric (non-K\"ahler) invariant Einstein metrics, a result which supports the still open    {\it finiteness conjecture} of B\"ohm-Wang-Ziller (\cite{BWZ04}).   Note    that  $M=G/K$ admits a unique invariant complex structure, and thus a  unique invariant K\"ahler-Einstein metric, with explicit form  (see \cite{Bor, CS1})
\begin{equation}\label{KE}
g_{KE}=\sum_{i}^{r}i\cdot B|_{\fr{m}_i}=B|_{\fr{m}_1}+2B|_{\fr{m}_2}+\ldots+rB|_{\fr{m}_r}\,.
\end{equation}
 The isotropy summands satisfy the relations
\[
 [\fr{k}, \fr{m}_{i}]\subset\fr{m}_{i}, \quad  [{\frak m}_i, {\frak m}_i]\subset{\frak k} + {\frak m}_{ 2 i }, \quad [{\frak m}_i, {\frak m}_j]\subset {\frak m}_{ i + j }+ {\frak m}_{| i - j |}  \ \ (i\neq j). 
\]
Hence, the non-zero structure constants are listed as follows:
\[
\begin{tabular} {r || l | l}
$r$   & non-zero structure constants $c_{ij}^{k}$   \\
\hline
2 &   $c_{11}^{2}$  (\cite{Chry211}) \\
3 & $c_{11}^{2}$, $ c_{12}^{3}$  (\cite{Kim90, AC11})\\
4 & $c_{11}^{2}$, $ c_{12}^{3}$, $c_{13}^{4}$, $ c_{22}^{4}$  (\cite{Chry210})\\
5 & $c_{11}^{2}$, $ c_{12}^{3}$, $c_{13}^{4}$, $c_{14}^{5}$, $c_{22}^{4}$, $c_{23}^{5}$  (\cite{CS1})\\
6 & $c_{11}^{2}$, $ c_{12}^{3}$, $c_{13}^{4}$, $c_{14}^{5}$, $c_{15}^{6}$, $c_{22}^{4}$, $c_{23}^{5}$, $c_{24}^{6}$, $c_{33}^{6}$    (\cite{CS1})\\
\end{tabular}
\]
For $r=2, 3$, the use of the K\"ahler-Einstein metric is sufficient for an explicit computation of $c_{ij}^{k}$, and this yields  a general expression of them in terms of $d_{i}=\dim\fr{m}_i$ (see \cite{Kim90, Chry211, AC11}). For $4\leq r\leq 6$ more advanced techniques are necessary for the computation of $c_{ij}^{k}$, which depend on the specific coset (see \cite{Chry210, CS1}). \\
\noindent {{\bf Case}  $\bold{r=2}$:} All flag manifolds $M=G/K$ with $G$ simple  and $r=2$ have $b_{2}(M)=1$, see  \cite{Chry211} for details.  We recall that:
\begin{eqnarray}
\ric_{1}&=&\displaystyle\frac{1}{2x_{1}} - c_{11}^{2}\displaystyle\frac{x_{2}}{{2d_{1}}{x_{1}^2}}\,, \quad
\ric_{2}=\displaystyle\frac{1}{2x_{2}} +  \frac{c_{11}^{2}}{4d_{2}}\Big(\displaystyle\frac{x_{2}}{{x_{1}^2}}  -\displaystyle\frac{2}{x_{2}}\Big)\,,\quad c_{11}^{2}=\displaystyle\frac{d_{1}d_{2}}{d_{1}+4d_{2}}\,,\nonumber\\
\Scal_{g}&=&\sum_{i=1}^{2}d_{i}\cdot \ric_{i}=\displaystyle\frac{1}{2}\Big(\frac{d_{1}}{x_{1}}+\frac{d_{2}}{x_{2}}\Big)-\frac{c_{11}^{2}}{4}\Big(\frac{x_{2}}{x_{1}^{2}}+\frac{2}{x_{2}}\Big).\label{sca2c}
\end{eqnarray}
\noindent {{\bf Case}  $\bold{r=3}$:}   Let $M=G/K$ be a flag manifold with $b_{2}(M)=1$ and $r=3$.  Such  flag spaces have been classified by \cite{Kim90}, see also \cite{AC11, Grama20}.  They all   correspond to exceptional compact simple Lie groups, but the correspondence is not a bijection. One should mention that not  all flag   manifolds with $r=3$ are  exhausted by this type of homogeneous spaces; This means that still one can construct  flag spaces with  $r=3$ and $b_{2}(M)=2$. We recall that
\begin{eqnarray}
 c_{11}^{2}&=&\frac{d_{1}d_{2} + 2d_{1}d_{3} - d_{2}d_{3}}{d_{1} + 4d_{2} + 9d_{3}}\,,    \quad c_{12}^{3}=\frac{d_{3}(d_{1} + d_{2})}{d_{1} + 4d_{2} + 9d_{3}}\,,\nonumber\\
  \ric_{1}&=&\displaystyle\frac{1}{2x_{1}} - \displaystyle\frac{c_{11}^{2}x_{2}}{2d_{1}x_{1}^{2}}+\displaystyle\frac{c_{12}^{3}}{2d_{1}}\Big(\frac{x_{1}}{x_{2}x_{3}}-\frac{x_{2}}{x_{1}x_{3}}-\frac{x_{3}}{x_{1}x_{2}}\Big),\nonumber\\
\ric_{2}&=&\displaystyle\frac{1}{2x_{2}} +  \displaystyle\frac{c_{11}^{2}}{4d_{2}}\Big(\frac{x_{2}}{x_{1}^2} -\frac{2}{x_{2}}\Big)+\displaystyle\frac{c_{12}^{3}}{2d_{2}}\Big(\frac{x_{2}}{x_{1}x_{3}}-\frac{x_{1}}{x_{2}x_{3}}-\frac{x_{3}}{x_{1}x_{2}}\Big),\nonumber\\
\ric_{3}&=&\displaystyle\frac{1}{2x_{3}} +  \displaystyle\frac{c_{12}^{3}}{2d_{3}}\Big(\frac{x_{3}}{x_{1}x_{2}}-\frac{x_{1}}{x_{2}x_{3}}-\frac{x_{2}}{x_{1}x_{3}}\Big),\nonumber\\
\Scal_{g}&=&\frac{1}{2}\Big(\frac{d_{1}}{x_{1}}+\frac{d_{2}}{x_{2}}+\frac{d_{3}}{x_{3}}\Big)-\displaystyle\frac{c_{11}^{2}}{4}\Big(\frac{x_{2}}{x_{1}^{2}}+ \frac{2}{x_{2}}\Big) -  \displaystyle\frac{c_{12}^{3}}{2}\Big(\frac{x_{1}}{x_{2}x_{3}}+\frac{x_{2}}{x_{1}x_{3}}+\frac{x_{3}}{x_{1}x_{2}}\Big). \label{sca3c}
\end{eqnarray}
\noindent {{\bf Case}  $\bold{r=4}$:} Let $M=G/K$ be a flag manifold with $G$ simple, $r=4$ and $b_{2}(M)=1$.  There  exist   four such   homogeneous spaces and all of the them correspond to an exceptional  Lie group. To  give the reader a small taste of PDDs, we present these flag spaces below, together with the corresponding   PDD and Dynkin marks. As for the case $r=3$, one should be aware that  still exist flag manifold with $r=4$, but $b_{2}(M)=2$.
{\small{
\begin{center}\[ \begin{tabular}{l | c}
$M=G/K$ with $b_{2}(M)=1$ and $r=4$ & PDD \\
\thickline
$\F_4/\SU(3)\times\SU(2)\times\U(1)$ & $\begin{picture}(160,25)(38, 0)
\put(87,10){\circle{4 }}
\put(87,18){\makebox(0,0){$\al_1$}}
\put(87,2){\makebox(0,0){2}}
\put(89,10.5){\line(1,0){14}}
\put(105,10){\circle{4 }}
\put(105,18){\makebox(0,0){$\al_2$}}
\put(105,2){\makebox(0,0){3}}
\put(107, 11.3){\line(1,0){12.4}}
\put(107, 9.1){\line(1,0){12.6}}
\put(117.6, 8){\scriptsize $>$}
\put(124.5,10){\circle*{4 }}
\put(124.5, 18){\makebox(0,0){$\al_3$}}
\put(124.5, 2){\makebox(0,0){4}}
\put(126,10.5){\line(1,0){16}}
\put(144,10){\circle{4 }}
\put(144,18){\makebox(0,0){$\al_4$}}
\put(144,2){\makebox(0,0){2}}
\end{picture}$ \\
\hline
$\E_7/\SU(4)\times\SU(3)\times\SU(2)\times\U(1)$ & 
$\begin{picture}(160,25)(-25, 0)

\put(15, 9.5){\circle{4 }}
\put(15, 18){\makebox(0,0){$\al_1$}}
\put(15,2){\makebox(0,0){1}}
\put(17, 10){\line(1,0){14}}
\put(33.5, 9.5){\circle{4 }}
\put(33.5, 18){\makebox(0,0){$\al_2$}}
\put(33.5,2){\makebox(0,0){2}}
\put(35, 10){\line(1,0){13.6}}
 \put(51, 9.5){\circle{4 }}
 \put(51, 18){\makebox(0,0){$\al_3$}}
\put(51,2){\makebox(0,0){3}}
\put(69,-6){\line(0,1){14}}
\put(53,10){\line(1,0){14}}
\put(69,9.5){\circle*{4 }}
\put(69.9, 18){\makebox(0,0){$\al_4$}}
\put(65,2){\makebox(0,0){4}}
\put(69,-8){\circle{4}}
\put(79, -10){\makebox(0,0){$\al_7$}}
\put(62,-10){\makebox(0,0){2}}
\put(71,10){\line(1,0){16}}
\put(89,9.5){\circle{4 }}
\put(89, 18){\makebox(0,0){$\al_5$}}
\put(89,2){\makebox(0,0){3}}
\put(90.7,10){\line(1,0){16}}
\put(109,9.5){\circle{4 }}
\put(109, 18){\makebox(0,0){$\al_6$}}
\put(109, 2){\makebox(0,0){2}}
\end{picture}
$  \\\\
\hline
$\E_8/\SO(10)\times\SU(3)\times\U(1)$  & 
$\begin{picture}(160,25)(-5, 0)
\put(15, 9.5){\circle{4 }}
\put(15, 18){\makebox(0,0){$\al_1$}}
\put(15,2){\makebox(0,0){2}}
\put(17, 10){\line(1,0){14}}
\put(33.5, 9.5){\circle{4 }}
\put(33.5, 18){\makebox(0,0){$\al_2$}}
\put(33.5,2){\makebox(0,0){3}}
\put(35, 10){\line(1,0){13.6}}
 \put(51, 9.5){\circle*{4 }}
 \put(51, 18){\makebox(0,0){$\al_3$}}
\put(51,2){\makebox(0,0){4}}
\put(53,10){\line(1,0){14}}
\put(69,9.5){\circle{4 }}
\put(69, 18){\makebox(0,0){$\al_4$}}
\put(69,2){\makebox(0,0){5}}
\put(89,-8){\circle{4}}
\put(99, -9.5){\makebox(0,0){$\al_8$}}
\put(82,-9.5){\makebox(0,0){3}}
\put(89,-6){\line(0,1){14}}
\put(71,10){\line(1,0){16}}
\put(89,9.5){\circle{4 }}
\put(89, 18){\makebox(0,0){$\al_5$}}
\put(84,2){\makebox(0,0){6}}
\put(90.7,10){\line(1,0){16}}
\put(109,9.5){\circle{4 }}
\put(109, 18){\makebox(0,0){$\al_6$}}
\put(109,2){\makebox(0,0){4}}
\put(111,10){\line(1,0){16}}
\put(129,9.5){\circle{4 }}
\put(129, 18){\makebox(0,0){$\al_7$}}
\put(129,2){\makebox(0,0){2}}
\end{picture}
$ \\\\
\hline
$\E_8/\SU(7)\times\SU(2)\times\U(1)$  & 
$\begin{picture}(160,25)(-5, 0)
\put(15, 9.5){\circle{4 }}
\put(15, 18){\makebox(0,0){$\al_1$}}
\put(15,2){\makebox(0,0){2}}
\put(17, 10){\line(1,0){14}}
\put(33.5, 9.5){\circle{4 }}
\put(33.5, 18){\makebox(0,0){$\al_2$}}
\put(33.5,2){\makebox(0,0){3}}
\put(35, 10){\line(1,0){13.6}}
 \put(51, 9.5){\circle{4 }}
 \put(51, 18){\makebox(0,0){$\al_3$}}
\put(51,2){\makebox(0,0){4}}
\put(53,10){\line(1,0){14}}
\put(69,9.5){\circle{4 }}
\put(69, 18){\makebox(0,0){$\al_4$}}
\put(69,2){\makebox(0,0){5}}
\put(89,-8){\circle{4}}
\put(99, -9.5){\makebox(0,0){$\al_8$}}
\put(82,-9.5){\makebox(0,0){3}}
\put(89,-6){\line(0,1){14}}
\put(71,10){\line(1,0){16}}
\put(89,9.5){\circle{4 }}
\put(89, 18){\makebox(0,0){$\al_5$}}
\put(84,2){\makebox(0,0){6}}
\put(90.7,10){\line(1,0){16}}
\put(109,9.5){\circle*{4 }}
\put(109, 18){\makebox(0,0){$\al_6$}}
\put(109,2){\makebox(0,0){4}}
\put(111,10){\line(1,0){16}}
\put(129,9.5){\circle{4 }}
\put(129, 18){\makebox(0,0){$\al_7$}}
\put(129,2){\makebox(0,0){2}}
\end{picture} 
$\\\\
\hline
\end{tabular}
\]
\end{center}}}
We also recall that
\begin{eqnarray*}
\ric_1&=&\displaystyle\frac{1}{2x_{1}}-\frac{c_{11}^{2}}{2d_{1}}\frac{x_{2}}{x_{1}^2}+
\frac{c_{12}^{3}}{2d_{1}}\Big(\frac{x_{1}}{x_{2}x_{3}}-\frac{x_{2}}{x_{1}x_{3}}-\frac{x_{3}}{x_{1}x_{2}}\Big)+
\frac{c_{13}^{4}}{2d_{1}}\Big(\frac{x_{1}}{x_{3}x_{4}}-\frac{x_{3}}{x_{1}x_{4}}-\frac{x_{4}}{x_{1}x_{3}}\Big)\,, \\
\ric_2&=&\displaystyle\frac{1}{2x_{2}}-\frac{c_{22}^{4}}{2d_{2}}\frac{x_{4}}{x_{2}^2}+
\frac{c_{11}^{2}}{4d_{2}}\Big(\frac{x_{2}}{x_{1}^2}-\frac{2}{x_{2}}\Big)+
\frac{c_{12}^{3}}{2d_{2}}\Big(\frac{x_{2}}{x_{1}x_{3}}-\frac{x_{1}}{x_{2}x_{3}}-\frac{x_{3}}{x_{1}x_{2}}\Big)\,, \\
\ric_3&=&\displaystyle\frac{1}{2x_{3}}+
\frac{c_{12}^{3}}{2d_{3}}\Big(\frac{x_{3}}{x_{1}x_{2}}-\frac{x_{2}}{x_{1}x_{3}}-\frac{x_{1}}{x_{2}x_{3}}\Big)+
\frac{c_{13}^{4}}{2d_{3}}\Big(\frac{x_{3}}{x_{1}x_{4}}-\frac{x_{1}}{x_{3}x_{4}}-\frac{x_{4}}{x_{1}x_{3}}\Big)\,,\\
\ric_4&=&\displaystyle\frac{1}{2x_{4}}+
\frac{c_{22}^{4}}{4d_{4}}\Big(\frac{x_{4}}{x_{2}^2}-\frac{2}{x_{4}}\Big)+
\frac{c_{13}^{4}}{2d_{4}}\Big(\frac{x_{4}}{x_{1}x_{3}}-\frac{x_{1}}{x_{3}x_{4}}-\frac{x_{3}}{x_{1}x_{4}}\Big)\,,\\
  \Scal_g&=&\frac{1}{2}\sum_{i=1}^{4}\frac{d_{i}}{x_{i}} - \frac{c_{12}^{3}}{2}(\frac{x_{1}}{x_{2}x_{3}} + \frac{x_{2}}{x_{1}x_{3}} + \frac{x_{3}}{x_{1}x_{2}}) - 
 \frac{c_{13}^{4}}{2}(\frac{x_{1}}{x_{3}x_{4}} + \frac{x_{3}}{x_{1}x_{4}}+\frac{x_{4}}{x_{1}x_{3}}) \\ 
 && -\frac{c_{11}^{2}}{4}(\frac{x_{2}}{x_{1}^{2}} + \frac{2}{x_{2}})-\frac{c_{22}^{4}}{4}(\frac{x_{4}}{x_{2}^{2}} + \frac{2}{x_{4}})\,.
\end{eqnarray*}
Let us finally present the values of $c_{ij}^{k}$ and the corresponding dimensions:
{\small\[
\begin{tabular}{l|l|l|l|l}
$M=G/K$ & $c_{22}^{4}$ & $c_{11}^{2}$ & $c_{12}^{3}$ & $c_{13}^{4}$\\
\thickline 
$\F_4/\SU(3)\times\SU(2)\times\U(1)$ & $2$ & $2$ & $1$ & $2/3$\\
\hline
$\E_7/\SU(4)\times\SU(3)\times\SU(2)\times\U(1)$  & $2$ & $8$ & $4$ & $4/3$\\
\hline 
$\E_8/\SO(10)\times\SU(3)\times\U(1)$ & $2$ & $16$ & $8$ & $8/5$\\
\hline
$\E_8/\SU(7)\times\SU(2)\times\U(1)$ & $14/3$ & $14$ & $7$ & $14/5$\\
\thickline
 & $d_{1}$ & $d_{2}$ & $d_{3}$ & $d_{4}$\\
\thickline 
$\F_4/\SU(3)\times\SU(2)\times\U(1)$ & $12$ & $18$ & $4$ & $6$\\
\hline
$\E_7/\SU(4)\times\SU(3)\times\SU(2)\times\U(1)$  & $48$ & $36$ & $16$ & $6$\\
\hline 
$\E_8/\SO(10)\times\SU(3)\times\U(1)$ & $96$ & $60$ & $32$ & $6$\\
\hline
$\E_8/\SU(7)\times\SU(2)\times\U(1)$ & $84$ & $70$ & $28$ & $14$\\
\hline
\end{tabular}
\]}
\noindent {{\bf Case}  $\bold{r=5}$:} 
According to \cite{CS1}, there is only one flag manifold $M=G/K$ with $G$ simple, $b_{2}(M)=1$ and $r=5$; This is the coset space 
\[
M=G/K=\E_8/\U(1)\times \SU(4)\times \SU(5)
\]
and  is determined by painting black  the simple root   $\al_4$ of $\E_8$, i.e. $\Pi_{M}=\{\al_4\}$, with ${\rm Dynk}(\al_4)=5$ (and hence $r=5$). The Ricci components $\ric_{i}$ are given by  
{\small\[
\begin{array}{ll} 
\ric_1 &=  \displaystyle{\frac{1}{2x_1} -
\frac{c_{11}^{2}}{2\,d_1}  \frac{x_2}{{x_1}^2}} + \frac{c_{12}^{3}}{2\,d_1}
\biggl(\frac{x_1}{x_2 x_3} - \frac{x_2}{x_1 x_3}- \frac{x_3}{x_1 x_2}
\biggr)  
\displaystyle{ +  \frac{c_{13}^{4}}{2\,d_1}
\biggl(\frac{x_1}{x_3 x_4} - \frac{x_3}{x_1 x_4}- \frac{x_4}{x_1 x_3}
\biggr)}\\ &
\displaystyle{  +  \frac{c_{14}^{5}}{2\,d_1}
\biggl(\frac{x_1}{x_4 x_5} - \frac{x_4}{x_1 x_5}- \frac{x_5}{x_1 x_4}
\biggr)}\,,
\\ & \\
\ric_2 &=  \displaystyle{\frac{1}{2x_2} + 
\frac{c_{11}^{2}}{4\,d_2}}\biggl( \frac{x_2}{{x_1}^2} - \frac{2}{x_2}\biggr)
 - \frac{c_{22}^{4}}{2\,d_2}\frac{x_4}{{x_2}^2}  + \displaystyle{  \frac{c_{12}^{3}}{2\,d_2}
\biggl(\frac{x_2}{x_1 x_3} - \frac{x_1}{x_2 x_3}- \frac{x_3}{x_2 x_1}
\biggr)} 
\displaystyle{ +  \frac{c_{23}^{5}}{2\,d_2}
\biggl(\frac{x_2}{x_3 x_5} - \frac{x_3}{x_2 x_5}- \frac{x_5}{x_2 x_3}
\biggr)}\,,
\\ & \\
\ric_3 &=  \displaystyle{\frac{1}{2 x_3} +}
\frac{c_{12}^{3}}{2\,d_3} 
\biggl(\frac{x_3}{x_1 x_2} - \frac{x_2}{x_3 x_1}- \frac{x_1}{x_3 x_2} \biggr) 
 +  \frac{c_{13}^{4}}{2\,d_3}
\biggl(\frac{x_3}{x_1 x_4} - \frac{x_1}{x_3 x_4}- \frac{x_4}{x_1 x_3}
\biggr) 
 \displaystyle{+  \frac{c_{23}^{5}}{2\,d_3} 
\biggl(\frac{x_3}{x_2 x_5} - \frac{x_2}{x_3 x_5}- \frac{x_5}{x_3 x_2}
\biggr)}\,,
\\ & \\
\ric_4 & = \displaystyle{\frac{1}{2 x_4} + 
\frac{c_{22}^{4}}{4\,d_4}}\biggl( \frac{x_4}{{x_2}^2} - \frac{2}{x_4}\biggr)
 \displaystyle{  +  \frac{c_{13}^{4}}{2\,d_4}}
\biggl(\frac{x_4}{x_1 x_3} - \frac{x_1}{x_3 x_4}- \frac{x_3}{x_4 x_1}
\biggr)  
\displaystyle{ +  \frac{c_{14}^{5}}{2\,d_4} 
\biggl(\frac{x_4}{x_1 x_5} - \frac{x_1}{x_4 x_5}- \frac{x_5}{x_1 x_4}
\biggr)}\,, 
\\ & \\
\ric_5 & = \displaystyle{\frac{1}{2 x_5} +  
\frac{c_{23}^{5}}{2\,d_5}}
\left(\frac{x_5}{x_2 x_3} - \frac{x_2}{x_3 x_5}- \frac{x_3}{x_2 x_5}
\right)  +
 \displaystyle{\frac{c_{14}^{5}}{2\,d_5} 
\left(\frac{x_5}{x_1 x_4} - \frac{x_1}{x_4 x_5}- \frac{x_4}{x_1 x_5}
\right). } 
\end{array}
\]}
The non-zero  structure constants have been computed in \cite[Prop.~6]{CS1} and it is useful to recall them:
 \[
c_{11}^{2} = 12, \quad c_{12}^{3} = 8, \quad c_{13}^{4} = 4, \quad c_{14}^{5} =4/3, \quad c_{22}^{4} = 4, \quad c_{23}^{5}= 2.
 \]
 Moreover, $d_1  = 80$, $ d_2 = 60$, $ d_3  = 40$, $ d_4  = 20$ and $ d_5 = 8$. \\
\noindent {{\bf Case}  $\bold{r=6}$:}  
By \cite{CS1} it is known that there is also only one flag manifold $M=G/K$ with $G$ simple, $b_{2}(M)=1$ and $r=6$. This is isometric to the homogeneous space 
\[
M=G/K=\E_8/\U(1)\times \SU(2)\times \SU(3)\times \SU(5),
\]
 which   is determined by painting black  the simple root  $\al_5$ of $\E_8$, i.e. $\Pi_{M}=\{\al_5\}$, with ${\rm Dynk}(\al_5)=6$. We know that $d_1 = 60$, $d_2 = 60$, $ d_3 = 40$, $ d_4  = 30$, $ d_5 =12$ and $d_6=10$. Also, the values of the non-zero structure constants have the form (see \cite[Prop.~12]{CS1})
\[
 c_{11}^{2}=8, \quad  c_{12}^{3}=6, \quad c_{13}^{4}=4, \quad  c_{14}^{5}=2, \quad  c_{15}^{6}=1, \quad  c_{22}^{4}=6, \quad  c_{23}^{5}=2,  \quad  c_{24}^{6}=2, \quad  c_{33}^{6}=2,
\]
and the components $\ric_{i}$ of  the Ricci tensor $\Ric_{g}$ corresponding to $g$  are given by 
{\small
\[
\begin{array}{ll} 
\ric_1 &=  \displaystyle{\frac{1}{2x_1} -
\frac{c_{11}^{2}}{2\,d_1} \frac{x_2}{{x_1}^2}} + \frac{c_{12}^{3}}{2\,d_1}
\biggl(\frac{x_1}{x_2 x_3} - \frac{x_2}{x_1 x_3}- \frac{x_3}{x_1 x_2}
\biggr) +  \frac{c_{13}^{4}}{2\,d_1} 
\biggl(\frac{x_1}{x_3 x_4} - \frac{x_3}{x_1 x_4}- \frac{x_4}{x_1 x_3}
\biggr) \\ & 
\displaystyle{  +  \frac{c_{14}^{5}}{2\,d_1}
\biggl(\frac{x_1}{x_4 x_5} - \frac{x_4}{x_1 x_5}- \frac{x_5}{x_1 x_4}
\biggr) + \frac{c_{15}^{6}}{2\,d_1} \biggl(\frac{x_1}{x_5 x_6} - \frac{x_5}{x_1 x_6}- \frac{x_6}{x_1 x_5}
\biggr)}\,,
\\ & \\
\ric_2 &=  \displaystyle{\frac{1}{2x_2} + 
\frac{c_{11}^{2}}{4\,d_2}\biggl( \frac{x_2}{{x_1}^2} - \frac{2}{x_2}\biggr)
 - \frac{c_{22}^{4}}{2\,d_2} \frac{x_4}{{x_2}^2}   +  \frac{c_{12}^{3}}{2\,d_2} 
\biggl(\frac{x_2}{x_1 x_3} - \frac{x_1}{x_2 x_3}- \frac{x_3}{x_2 x_1}
\biggr)} \\ & 
\displaystyle{ +  \frac{c_{23}^{5}}{2\,d_2} 
\biggl(\frac{x_2}{x_3 x_5} - \frac{x_3}{x_2 x_5}- \frac{x_5}{x_2 x_3}
\biggr)+  \frac{c_{24}^{6}}{2\,d_2} 
\biggl(\frac{x_2}{x_4 x_6} - \frac{x_4}{x_2 x_6}- \frac{x_6}{x_2 x_4}
\biggr)}\,,
\\ & \\
\ric_3 &= \displaystyle{\frac{1}{2 x_3}} - \displaystyle{\frac{c_{33}^{6}}{2\,d_3}
\frac{x_6}{{x_3}^{2}}+}
\frac{c_{12}^{3}}{2\,d_3} 
\biggl(\frac{x_3}{x_1 x_2} - \frac{x_2}{x_3 x_1}- \frac{x_1}{x_3 x_2} \biggr) 
 +  \frac{c_{13}^{4}}{2\,d_3}
\biggl(\frac{x_3}{x_1 x_4} - \frac{x_1}{x_3 x_4}- \frac{x_4}{x_1 x_3}
\biggr) \\ & 
 \displaystyle{+  \frac{c_{23}^{5}}{2\,d_3}
\biggl(\frac{x_3}{x_2 x_5} - \frac{x_2}{x_3 x_5}- \frac{x_5}{x_3 x_2}
\biggr)}\,,
\\ & \\
\ric_4 & = \displaystyle{\frac{1}{2 x_4} + 
\frac{c_{22}^{4}}{4\,d_4}}\biggl( \frac{x_4}{{x_2}^2} - \frac{2}{x_4}\biggr)
 \displaystyle{  +  \frac{c_{13}^{4}}{2\,d_4} }
\biggl(\frac{x_4}{x_1 x_3} - \frac{x_1}{x_3 x_4}- \frac{x_3}{x_4 x_1}
\biggr)  \\ & 
\displaystyle{ +  \frac{c_{14}^{5}}{2\,d_4} 
\biggl(\frac{x_4}{x_1 x_5} - \frac{x_1}{x_4 x_5}- \frac{x_5}{x_1 x_4}
\biggr)+  \frac{c_{24}^{6}}{2\,d_4}
\biggl(\frac{x_4}{x_2 x_6} - \frac{x_2}{x_4 x_6}- \frac{x_6}{x_2 x_4}
\biggr)}\,,
\\ & \\
\ric_5 & = \displaystyle{\frac{1}{2 x_5} +  
\frac{c_{14}^{5}}{2\,d_5} }
\left(\frac{x_5}{x_1 x_4} - \frac{x_1}{x_4 x_5}- \frac{x_4}{x_1 x_5}
\right)+\displaystyle{\frac{c_{23}^{5}}{2\,d_5}}
\left(\frac{x_5}{x_2 x_3} - \frac{x_2}{x_3 x_5}- \frac{x_3}{x_2 x_5}
\right)  +
  \displaystyle{\frac{c_{15}^{6}}{2\,d_5}} 
\left(\frac{x_5}{x_1 x_6} - \frac{x_1}{x_5 x_6}- \frac{x_6}{x_1 x_5}
\right)\,, 
\\ & \\
\ric_6 & =\displaystyle{\frac{1}{2 x_6} + 
\frac{c_{33}^{6}}{4\,d_6} }
\biggl( \frac{x_6}{{x_3}^2} - \frac{2}{x_6}\biggr)
 \displaystyle{  +  \frac{c_{15}^{6}}{2\,d_6} }
\biggl(\frac{x_6}{x_1 x_5} - \frac{x_1}{x_5 x_6}- \frac{x_5}{x_1 x_6}
\biggr)  
\displaystyle{ +  \frac{c_{24}^{6}}{2\,d_6} 
\biggl(\frac{x_6}{x_2 x_4} - \frac{x_2}{x_4 x_6}- \frac{x_4}{x_2 x_6}
\biggr)} .
\end{array}
\]}
\subsection{The main theorem}
Let $M=G/K$ be a flag manifold with $b_{2}(M)=1$ and $2\leq r\leq 6$. The system of the homogeneous Ricci flow is given by
\begin{equation}\label{hrf}
\Big\{\dot{x}_{i}=-2x_{i}\cdot\ric_{i} \ : \  i=1, \ldots, r\Big\}.
\end{equation}
For any case separately, a direct computation  shows that system (\ref{hrf}) does {\it not} possess   fixed points in $\M^G\cong \R^r_+$,  i.e. points $(x_1, \ldots, x_r)\in\R^{r}_{r}$ satisfying the system $\{\dot{x}_i=\dot{x}_2=\cdots=\dot{x}_{r}=0\}$, which verifies   Lemma \ref{nofixed}.

Let us now agree on the following notation: 
We denote by $\fr{e}_j$   a fixed point of the homogeneous Ricci flow (HRF)  at {\it infinity of $\M^G$}, as defined in Remark \ref{fixinfinity}.  We shall write $N$ for the number of all such fixed points.   We will also  denote  by $d_{j}^{\rm unstb}$ (respectively $d_{j}^{\rm stb}$) the dimension of the {\it unstable manifold} (respectively {\it stable manifold}) in $\M^{G}$ (respectively in the infinity of $\M^G$), corresponding to $\fr{e}_j$. In this terms we obtain the following

\begin{theorem}\label{mainthem}  
Let $M=G/K$ be a non-symmetric  flag manifold with $b_{2}(M)=1$, and let $r$ ($2\leq r\leq 6$) be the  number of the corresponding isotropy summands. Then, the following hold:\\
(1)The HRF admits exactly $N$ fixed points $\fr{e}_j$ at the infinity of $\M^G$, where for any coset $G/K$ the number $N$ is specified in Table \ref{Table1}.  These fixed points are in bijective correspondence with non-isometric invariant Einstein metrics on $M=G/K$, and are specified explicitly in the proof. \\
(2)The dimensions of the stable/unstable manifolds corresponding to  $\fr{e}_j$ are given  in Table \ref{Table1}, where $\fr{e}_1$   represents  the fixed point corresponding to the  unique invariant K\"ahler-Einstein metric on $G/K$. We see that\\
i) For any $M=G/K$, the fixed point $\fr{e}_1$ has always an 1-dimensional unstable manifold in $\M^G$, and a $(r-1)$-dimensional stable manifold in the infinity of $\M^G$.\\
ii) Any other fixed point  $\fr{e}_k$ with $2\leq k\leq N$  has always a 2-dimensional unstable manifold, while its stable manifold is $(r-2)$-dimensional and is contained in the infinity of $\M^G$, with the following three exceptions: 
\begin{itemize}
\item the fixed point   $\fr{e}_5$ for the space $M_{*}:=\E_8/\U(1)\times\SU(3)\times\SO(10)$ in the case with $r=4$;
\item the fixed point $\fr{e}_5$ in the case of $M=\E_8/\U(1)\times \SU(4)\times \SU(5)$ with $r=5$; 
\item the fixed point  $\fr{e}_4$ in the case of  $M=\E_8/\U(1)\times \SU(2)\times \SU(3)\times \SU(5)$ with $r=6$.
\end{itemize}
These three exceptions have a 3-dimensional unstable manifold and a $(r-3)$-dimensional stable manifold. Similarly, the stable manifold for these cases is   contained entirely in the infinity of $\M^G$.
\\
(3) Each unstable manifold  of any $\fr{e}_j$,   contains a non-collapsed ancient solution, given by
\[
g_{j} : (-\infty, \frac{1}{2\lambda_j})\longrightarrow\M^G\,,\quad t\longmapsto g_{j}(t)=(1-2\lambda_{j}t)\cdot\fr{e}_{j}\,,\quad j=1,\ldots, N\,,
\]
 where $\lambda_j$ is the  Einstein constant  of the  corresponding Einstein metric $g_{j}(0)$, $j=1,\ldots N$, on $M=G/K$ (these are also specified below).
All such solutions $g_{j}(t)$ tend to $0$ when $t\to T=\frac{1}{2\lambda_j}>0$, and   $M=G/K$ shrinks to a point in finite time. \\  
(4) When $r=2$,  any other possible solution  of the Ricci flow with initial condition in $\M^G$ has $\{0\}$ as its  $\omega$--limit set.
\begin{table}[ht]
\centering
\renewcommand\arraystretch{1.4}
\begin{tabular} {r | c | c | c  | c | c  | c }
$r$ & conditions for $M=G/K$ & $N$ & $d_{1}^{\rm unstb}$ & $d_{1}^{\rm stb}$ & $d_{j}^{\rm unstb}$ $(2\leq j\leq N)$ & $d_{j}^{\rm stb}$ $(2\leq  j\leq N)$\\
\hline
2 & - & 2 & 1 & 1 & 2 & 0 \\
3 & - & 3 & 1 & 2 & 2 & 1 \\
4  &$M\ncong M_{*}$&   3   & 1  & 3 & 2 &  2 \\
4 & $M\cong M_{*}$&  5      &  1 &  3 & $(\text{for} \ j\neq 5)$ \  2 & $( \text{for} \ j\neq 5)$ \ 2 \\ 
   &                            &        &     &     &  $(\text{for} \ j=5)$ \ 3 &       $(\text{for} \ j=5)$ \ 1 \\
5 &-    & 6 &   1 & 4 & $(\text{for} \ j\neq 5)$ \ 2  & $(\text{for} \ j\neq 5)$ \ 3 \\
   &      &    &      &   &   $(\text{for} \ j=5)$ \ 3   & $(\text{for} \ j=5)$ \ 2 \\
6 &-    & 5&    1 & 5 & $(\text{for} \ j\neq 4)$ 2  & $(\text{for} \ j\neq 4)$ 4\\
   &      &  &       &    &  $(\text{for} \ j= 4)$ 3  & $(\text{for} \ j=4)$ 3\\
\thickline    
\end{tabular}
\caption{The exact number $N$ of the fixed points $\fr{e}_{k}$ of HRF at infinity of $\M^G$ for any non-symmetric flag space $M=G/K$ with $b_2(M)=1$, and the dimensions $d_{k}^{\rm stb}, d_{k}^{\rm unstb}$, for any  $1\leq k\leq N$.}\label{Table1}
\end{table}
\end{theorem}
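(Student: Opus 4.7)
The plan is to proceed case by case for each $r \in \{2,3,4,5,6\}$. For each non-symmetric flag manifold $M=G/K$ tabulated in the preceding subsection, the explicit Ricci components $\ric_k$ express the homogeneous Ricci flow (\ref{hrf}) as a system of ODEs whose right-hand sides are homogeneous Laurent polynomials of degree $-1$ in $x_1,\dots,x_r$. Multiplying by a suitable positive common denominator yields a qualitatively equivalent polynomial vector field $X$ of degree $d$, to which the Poincar\'e compactification of Section \ref{poinc} applies with local expression given by Proposition \ref{lochf}. All four assertions then reduce to finite algebraic computations on each specific coset; the global analytic input comes from Propositions \ref{ak}, \ref{sscal}, and Corollary \ref{noncollapsed}.

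For part (1), I set $x_r = 0$ in the local expression (\ref{locexp}) and, after cancelling the overall factor $x_r^d$, solve the resulting algebraic system for the fixed points of $p(X)$ on the equator. The bijection with invariant Einstein metrics is the conceptual heart of the argument: a metric $g_0 = (x_1^0,\dots,x_r^0) \in \M^G$ is Einstein with constant $\lambda$ exactly when the Ricci flow trajectory through $g_0$ is the radial ray $t \mapsto (1-2\lambda t) g_0$ from Proposition \ref{ak}. In projective terms, the fixed directions of the induced vector field on $\M^G/\mathbb{R}_+$ are precisely the Einstein rays, and these lift to fixed points of $p(X)$ on the equator. The value $N$ in Table \ref{Table1} is then read off from the known classification of non-isometric invariant Einstein metrics on these cosets, due to Kimura, Arvanitoyeorgos--Chrysikos, and Chrysikos--Sakane as recalled above.

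For part (2), I linearize $p(X)$ at each fixed point $\mathfrak{e}_j$, computing the Jacobian of (\ref{locexp}) (after cancelling the $x_r^d$ factor) and counting positive versus negative eigenvalues to obtain $d_j^{\mathrm{unstb}}$ and $d_j^{\mathrm{stb}}$. One eigenvalue always corresponds to the radial/homothetic direction transverse to the equator and is necessarily positive, since the ancient solution $(1-2\lambda_j t)\mathfrak{e}_j$ moves nontrivially along that direction. The remaining $r-1$ eigenvalues are tangent to the equator and encode the linearised behaviour of the projectivised flow near the Einstein direction $\mathfrak{e}_j$; their signs must be computed case by case from the explicit formulas for $\ric_k$ given in the preceding subsection. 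The K\"ahler--Einstein point $\mathfrak{e}_1$ with direction $(1:2:\cdots:r)$ from (\ref{KE}) yields one positive and $r-1$ negative eigenvalues, giving $d_1^{\mathrm{unstb}} = 1$ and $d_1^{\mathrm{stb}} = r-1$. Each generic non-K\"ahler Einstein point contributes a single additional positive tangential eigenvalue ($d_j^{\mathrm{unstb}}=2$), while the three exceptional Einstein metrics on $\E_8/\U(1)\times\SU(3)\times\SO(10)$, $\E_8/\U(1)\times\SU(4)\times\SU(5)$, and $\E_8/\U(1)\times\SU(2)\times\SU(3)\times\SU(5)$ contribute two additional positive tangential eigenvalues ($d_j^{\mathrm{unstb}}=3$), identified by direct Jacobian computation on each of these specific cosets.

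Part (3) follows immediately: the solution $g_j(t) = (1-2\lambda_j t)\mathfrak{e}_j$ from Proposition \ref{ak} flows precisely along the radial unstable direction at $\mathfrak{e}_j$, is ancient on $(-\infty, 1/(2\lambda_j))$, tends to $0$ at the blow-up time, and is non-collapsed by Corollary \ref{noncollapsed}; the shrinking of the volume to zero is then elementary from the conformal form of $g_j(t)$. For part (4) with $r=2$, the compactified phase space is two-dimensional, contains no interior fixed points by Lemma \ref{nofixed}, and has only the two fixed points $\mathfrak{e}_1,\mathfrak{e}_2$ on the equator together with $\{0\}$ as its boundary limit set. By the finite-extinction statement of Proposition \ref{sscal}, every trajectory leaves $\M^G$ in forward time. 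A trajectory not lying on an Einstein ray cannot approach either $\mathfrak{e}_j$, since the respective unstable manifolds inside $\M^G$ are precisely those rays, and two-dimensional phase-plane analysis therefore forces its $\omega$-limit to be $\{0\}$. The principal obstacle I anticipate is the explicit Jacobian eigenvalue analysis of part (2), where distinguishing the exceptional $3$-dimensional unstable manifolds from the generic $2$-dimensional ones requires extracting sharp sign information from characteristic polynomials whose coefficients mix the specific structure constants $c_{ij}^k$ with the dimensions $d_i$; the algebra rapidly becomes unwieldy for $r \geq 4$, and a combination of structural simplifications with explicit numerical verification on each coset seems unavoidable.
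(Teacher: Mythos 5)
Your plan coincides with the paper's proof in all essentials for parts (1)--(3): multiply (\ref{hrf}) by a positive common denominator to obtain a homogeneous polynomial field, apply the Poincar\'e compactification of Proposition \ref{lochf}, set $x_r=0$ to locate the fixed points on the equator, match them with the classified invariant Einstein metrics (the paper does this by explicitly listing the numerical roots coset by coset and invoking \cite{Kim90, Chry210, Chry211, CS1}), and read off $d_j^{\rm stb}, d_j^{\rm unstb}$ from the Jacobian eigenvalues, with the radial Einstein ray supplying the guaranteed unstable direction and Proposition \ref{ak} together with Corollary \ref{noncollapsed} giving part (3). Your conceptual remark that the equatorial fixed points are exactly the projectivised Einstein rays is a cleaner way of phrasing the bijection in (1) than the paper's explicit matching, and your anticipation that the exceptional $3$-dimensional unstable manifolds can only be detected by case-specific Jacobian computations is exactly how the paper proceeds (e.g.\ the matrix $\mathsf{Jac}_{(0.763357,1.00902,0.191009)}$ for $M_*$).

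The one place where you genuinely diverge is part (4), and there your sketch has a gap. The paper does not argue via Poincar\'e--Bendixson; it exhibits the explicit Lyapunov function $V(x_1,x_2)=x_1^2+x_2^2$, computes $\dot V<0$ on the open quadrant, and concludes directly that every forward trajectory tends to the origin. Your alternative has two weak points. First, you cite Proposition \ref{sscal} for the claim that \emph{every} trajectory leaves $\M^G$ in finite forward time, but that proposition only asserts the existence of \emph{some} initial metric with finite extinction; the statement for all initial data requires B\"ohm's finite-extinction theorem \cite{Bohm15} quoted in the introduction. Second, even granting finite extinction and that no interior trajectory can converge to $\fr{e}_1$ or $\fr{e}_2$ (their stable manifolds lie in the equator), ``two-dimensional phase-plane analysis'' does not by itself exclude an $\omega$-limit on the coordinate faces $\{x_1=0\}$ or $\{x_2=0\}$ away from the origin, nor a more complicated boundary invariant set; one must actually inspect the boundary dynamics or, as the paper does, produce a Lyapunov function whose strict decrease forces the limit to be $\{0\}$. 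You would need to either supply such a function or complete the boundary analysis to close part (4).
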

\begin{proof}
We split the proof in cases, depending on the possible values of $r$. \\
\noindent  {\bf Case} $\bold{r=2.}$  In this case the system (\ref{hrf}) reduces to:
 \begin{equation}\label{hf2}
\Big\{\dot{x}_1=-\frac{(d_1+4d_2)x_1-d_2x_2}{(d_1+4d_2)x_1}\,,\quad
\dot{x}_2=-\frac{8d_2x_1^2+d_1x_2^2}{2(d_1+4d_2)x_1^2} \Big\}.
\end{equation}
To search for fixed points of the HRF at infinity of $\M^G$, we first multiply the   right-hand side of these equations with the positive factor $2(d_1+4d_2)x_1^2$. This multiplication does not qualitatively affect system's behaviour, and  we result with the following equivalent system:
\[
\Big\{
\dot{x}_1=\RF_1(x_1,x_2)=-2(d_1+4d_2)x_1^2+2d_2x_1x_2\,,\quad
\dot{x}_2=\RF_2(x_1,x_2)=-(8d_2x_1^2+d_1x_2^2)
\Big\},
\]
where the right-hand side consists of two homogeneous polynomials of degree $2$.  This is the maximal degree $d$ of the system, as discussed above.  We can therefore apply the Poincar\'{e} compactification procedure to study its behaviour at infinity.  For the formulas given in Example \ref{inf2d},  i.e.
\[
\Big\{\dot{x}_1=x_2^2\big(-x_1\overline{\RF}_1(x_1, x_2)+\overline{\RF}_{2}(x_1, x_2)\big)\,,\quad \dot{x}_2=x_2^2\big(-x_2\overline{\RF}_1(x_1, x_2)\big)\Big\},
\]
we compute
\[
\overline{\RF}_1(x_1,x_2)=\RF_{1}(\frac{1}{x_2},\frac{x_1}{x_2})=-\frac{2(d_1+4d_2-d_2x_1)}{x_2^{2}}\,,\quad \overline{\RF}_2(x_1,x_2)=\RF_{2}(\frac{1}{x_2},\frac{x_1}{x_2})=-\frac{(8 d_2+d_1x_1^2)}{x_2^2}\,.
\]
Hence finally we result with the system
\[
\Big\{\dot{x}_1=-(x_1-2)(-4d_2+d_1x_1+2d_2x_1)\,,\quad
\dot{x}_2=2(d_1+4d_2-d_2x_1)x_2\Big\},
\]
which is the desired expression   in the $U$ chart. To study  the behaviour  of this system at infinity of $\R^{2}_{+}$, we set $x_2=0$. Then, the second equation becomes $\dot{x}_2=0$, confirming that infinity remains invariant under the flow. To locate fixed points, we solve the equation 
\[
\dot{x}_1= h(x_1)=-(x_1-2)(-4d_2+d_1x_1+2d_2x_1)=0\,.
\]
We obtain two exactly solutions, namely $x_1^{a}=2$ and $x_1^{b}=4d_2/(d_1+2d_2)$, and since $h'(x_1^{a})=-2d_1$, $h'(x_1^{b})=2d_1$, both  fixed points are hyperbolic. 
In particular, $h'(x_1^a)<0$ and so  $x_1^{a}$ is always an attracting node with eigenvalue equal to $-2d_1$. On the other hand,  $h'(x_1^b)>0$ and hence  $x_1^{b}$ is a repelling node, with eigenvalue equal to $2d_1$. 

Recall now that the central projection $F$ maps the sphere to the $y_1=1$ plane. Therefore, the fixed points $x_1^a,\ x_1^b$ represent the points $\fr{e}_1=(1,x_1^a)$ and $\fr{e}_2=(1,x_1^b)$ in $\R^{2}_{+}$,  which correspond to the invariant Einstein metrics
$$g_{KE}=1\cdot B|_{\mathfrak{m}_1}+2\cdot B|_{\mathfrak{m}_2},\quad g_{E}=1\cdot B|_{\mathfrak{m}_1}+4d_2/(d_1+2d_2)\cdot B|_{\mathfrak{m}_2},$$
respectively (see also \cite{Chry211}).
To locate the ancient solutions, let $\hat{g}(t)=t(x_1(0),x_2(0))$ be a straight line in $\M^{G}$.  At the point $(a,b)$, belonging in the trace of $\hat{g}(t)$, the vector normal to the straight line is the vector $(-b,a)$.   The straight line $\hat{g}(t)$ must be tangent to the vector field $X(x_1,x_2)$ defined by the homogeneous Ricci flow, hence the following equation should hold:
\[
(-b,a)\cdot X(a,b)=0\,.
\]
This gives us two solutions, namely $(a_1,b_1)=(1,2)$ and $(a_2,b_2)=(1,4d_2/(d_1+2d_2))$, which define the lines $\gamma_i(t)=t(a_i,b_i),\ i=1,2$. The solutions $g_{1}(t)$, $g_{2}(t)$ of system (\ref{hf2}) corresponding to the lines $\gamma_{1}(t)$ and $\gamma_{2}(t)$, are determined by the  following equations
\begin{eqnarray*}
g_{1}(t)&=&(1-2\lambda_{1}t)\cdot (1, 2)=(1-2\lambda_{1}t)\cdot\fr{e}_1\,, \\
g_{2}(t)&=&(1-2\lambda_{2}t)\cdot (1, \frac{4d_2}{d_1+2d_2})=(1-2\lambda_{2}t)\cdot \fr{e}_2\,,
\end{eqnarray*}
where  $\lambda_1, \lambda_2$ are the Einstein constants of  $g_{KE}=g_{1}(0)$ and   $g_{E}=g_{2}(0)$, respectively, given by
 \[
 \lambda_1=\frac{d_1 + 2 d_2}{2(d_1 + 4d_2)}\,,\quad\quad\lambda_2=\frac{d_1^2 + 6 d_1 d_2 + 4 d_2^2}{2(d_1 + 2 d_2) (d_1 + 4 d_2)}.
 \]
 Obviously, these are both ancient solutions since are defined on the open set $(-\infty, \frac{1}{2\lambda_i})$ (see also  Proposition \ref{ak}),  in particular $g_{i}(t)\to 0$ when  $t\to\frac{1}{2\lambda_i},\ i=1,2$. The assertion that $g_{i}(t)$ are non-collapsed follows by Corollary  \ref{noncollapsed}. 
 
Based on the definitions of the central projections $f$ and $F$  given before, we verify that
\[
F(f(\gamma_1(t)))=(1,2)\,,\quad\text{and} \quad F(f(\gamma_2(t)))=(1,\frac{4d_2}{d_1+2d_2}).
\]
Thus, we have that
\[
\lim_{t\rightarrow \frac{1}{2\lambda_i}}\gamma_i(t)=0\,,\quad\text{and}\quad\lim_{t\rightarrow -\infty}\gamma_i(t)=\fr{e}_i\,,\quad \forall \ i=1,2,
\]
which proves the claim that these ancient solitons belong to the unstable manifolds of the fixed points located at infinity.

To verify the claim in (4), we use the function $V_2(x_1,x_2)=x_1^2+x_2^2$ as a Lyapunov function for the system (\ref{hf2}). We compute that
\[
\frac{dV}{dt}(x_1(t),x_2(t))=-\frac{2 d_2 x_1^2 (4 x_1 + 3 x_2) + d_1 (2 x_1^3 + x_2^3)}{2 (d_1 + 4 d_2) x_1^2},
\]
which for $x_1,x_2>0$ is always negative. Hence, if we denote as $(t_a,t_b),\ t_a,t_b\in \R\cup \{\pm \infty\}$ the domain of definition of the solution curve $g(t)=(x_1(t),x_2(t))$, we conclude that $g(t)$ tends to the origin, as $t\rightarrow t_b$. This completes the proof for $r=2$.\\
\noindent  {\bf Case} $\bold{r=3.}$  In this case,  to reduce   system (\ref{hrf})  in a polynomial dynamical system, we must multiply with the positive factor $2d_1d_2(d_1+4d_2+9d_3)x^2yz$. This gives 
\begin{eqnarray}
\label{sysrf3comp}
\dot{x}_1&=&\RF_1(x_1,x_2,x_3)=-2d_2x_1(d_1d_3x_1^3+d_2d_3x_1^3-d_1d_3x_1x_2^2-d_2d_3x_1x_2^2+ \nonumber\\
&+&d_1^2x_1x_2x_3+4d_1d_2x_1x_2x_3+9d_1d_3x_1x_2x_3-d_1d_2x_2^2x_3-2d_1d_3x_2^2x_3+ \nonumber\\
&+&d_2d_3x_2^2x_3-d_1d_3x_1x_3^2-d_2d_3x_1x_3^2)\,,\nonumber \\
\dot{x}_2&=&\RF_2(x_1,x_2,x_3)=-d_1x_2(-2d_1d_3x_1^3-2d_2d_3x_1^3+2d_1d_3x_1x_2^2+2d_2d_3x_1x_2^2+\\
&+& 8d_2^2x_1^2x_3-4d_1d_3x_1^2x_3+20d_2d_3x_1^2x_3+d_1d_2x_2^2x_3+2d_1d_3x_2^2x_3-d_2d_3x_2^2x_3- \nonumber \\
&-&2d_1d_3x_1x_3^2-2d_2d_3x_1x_3^2)\,, \nonumber \\
\dot{x}_3&=&\RF_3(x_1,x_2,x_3)=2d_1d_2x_1x_3(d_1x_1^2+d_2x_1^2-d_1x_1x_2-4d_2x_1x_2- \nonumber \\
&-&9d_3x_1x_2+d_1x_2^2+d_2x_2^2-d_1x_3^2-d_2x_3^2)\,. \nonumber
\end{eqnarray}
The right-hand side of the system above consists of homogeneous polynomials of degree $4$. Let us apply the Poincar\'{e} compactification procedure and set $x_3=0$, to obtain the equations governing the behaviour of the system at infinity of $\M^G$. By Example \ref{inf2d} we deduce that in the $U$-chart, these must be given as follows:
\begin{eqnarray*}
\dot{x}_1&=&x_1(2d_1^2d_3+4d_1d_2d_3+2d_2^2d_3-2d_1^2d_3x_1^2-4d_1d_2d_3x_1^2-2d_2^2 d_3x_1^2-8d_1d_2^2x_2+4d_1^2d_3x_2-\\
&-&20d_1d_2d_3x_2+2d_1^2d_2x_1x_2+8d_1d_2^2 x_1x_2+18d_1d_2d_3x_1x_2-d_1^2d_2x_1^2x_2-2d_1d_2^2x_1^2x_2-\\
&-&2d_1^2d_3x_1^2x_2-3d_1d_2d_3x_1^2x_2+2d_2^2d_3x_1^2x_2+
2d_1^2d_3x_2^2-2d_2^2d_3x_2^2)\,,\\
\dot{x}_2&=&-2d_2x_2(-d_1^2-d_1d_2-d_1d_3-d_2d_3+d_1^2x_1+4d_1d_2x_1+9d_1d_3 x_1-d_1^2x_1^2-d_1d_2x_1^2+\\
&+&d_1d_3x_1^2+d_2d_3x_1^2-d_1^2x_1x_2-4d_1d_2x_1x_2 -9d_1d_3x_1x_2+d_1d_2x_1^2x_2+2d_1d_3x_1^2x_2-\\
&-&d_2d_3x_1^2x_2+d_1^2x_2^2+d_1 d_2x_2^2+d_1d_3x_2^2+d_2d_3x_2^2)\,,\\
\dot{x}_3&=&0\,.
\end{eqnarray*}
As before, the last equation confirms that infinity remains invariant under the flow of the system.
To locate fixed points, we have to solve the system of equations $\{\dot{x}_1=\dot{x}_2=0\}$. 
For this, it is sufficient to   study each case separately and replace the dimensions $d_i$. For any case we  get exactly three fixed points, which we list as follows:
\begin{itemize}
\item {$\E_8/\E_6\times\SU(2)\times\U(1):$ In this case, we have $d_1=108, d_2=54, d_3=4$ and the fixed points at infinity are located at:
$$(2,3),\ (0.914286,1.54198),\ (1.0049,0.129681).$$}
\item {$\E_8/\SU(8)\times\U(1):$ In this case, we have $d_1=112, d_2=56 ,d_3=16$ and the fixed points at infinity are located at:
$$(2,3),\ (0.717586,1.25432),\ (1.06853,0.473177).$$}
\item {$\E_7/\SU(5)\times\SU(3) \times\U(1):$ In this case, we have $d_1=60, d_2=30, d_3=8$ and the fixed points at infinity are located at:
$$(2,3),\ (0.733552,1.27681),\ (1.06029,0.443559).$$}
\item {$\E_7/\SU(6)\times\SU(2)\times\U(1):$ In this case, we have $d_1=60, d_2=30, d_3=4$ and the fixed points at infinity are located at:
$$(2,3),\ (0.85368,1.45259),\ (1.01573,0.229231).$$}
\item {$\E_6/\SU(3)\times\SU(3)\times\SU(2)\times\U(1):$ In this case, we have $d_1=36, d_2=18, d_3=4$ and the fixed points at infinity are located at:
$$(2,3),\ (0.771752,1.33186),\ (1.04268,0.373467).$$}
\item {$\F_4/\SU(3)\times\SU(2)\times\U(1):$ In this case, we have $d_1=24, d_2=12, d_3=4$ and the fixed points at infinity are located at:
$$(2,3),\ (0.678535,1.20122),\ (1.09057,0.546045).$$}
\item {$\G_2/\U(2):$ In this case, we have $d_1=4, d_2=2, d_3=4$ and the fixed points at infinity are located at:
$$(2,3),\ (1.67467,2.05238),\ (0.186894,0.981478).$$}
\end{itemize}
The projections of these solutions via $F:U\rightarrow \mathbb{R}^3_{\geq}$, give us the points $\fr{e}_i$, $i=1, 2, 3$. These are the points obtained from the coordinates of the fixed points given above, with one extra coordinate equal to $1$, in the first entry.

According to (\ref{KE}), the indicated fixed point $\fr{e}_1=(1, 2,3)$  corresponds to the unique invariant K\"ahler-Einstein metric, and simple eigenvalue calculations show that it possesses a 2-dimensional stable manifold at the infinity of $\M^G$, and a 1-dimensional unstable manifold, which is contained in $\M^G$.  All  the other fixed points, correspond to non-K\"{a}hler non-isometric invariant Einstein metrics (see \cite{Kim90, AC11}), and have a 2-dimensional unstable manifold  and a 1-dimensional stable manifold, contained at the infinity of $\M^G$. \\
Let us now consider an invariant line of system (\ref{sysrf3comp}), of the form $\hat{g}(t)=t(x_1(0),x_2(0),x_3(0))$. At a point $(a,b,c)$ belonging to this line, the normal vectors are: $(-b,a,0),(0,-c,b)$, thus $\hat{g}(t)$ is tangent to the vector field $X(x_1,x_2,x_3)$ defined by the homogeneous Ricci flow if 
\[
(-b,a,0)\cdot X(a,b,c)=0\,,\quad (0,-c,b)\cdot X(a,b,c)=0\,.
\]
These equations possess three solutions, with respect to $(a,b,c)$; One of them is always $(1,2,3)$, while the other two can be obtained after numerically solving equations above for every value of $d_1,d_2,d_3$. 
These solutions give us the three non-collapsed ancient solutions $g_{i}(t)=(1-2\lambda_{i}t)\cdot\fr{e}_i$ with $t\in(-\infty, \frac{1}{2\lambda_i})$, for any $i=1, 2, 3$, where the Einstein constants $\lambda_i$ for the cases $i=2, 3$ can be computed easily by replacing the corresponding Einstein metrics $g_{2}(0), g_{3}(0)$   in the Ricci components, while for $g_{1}(t)$, $\lambda_1$ is specified as follows:
\[
\lambda_1=\frac{d_1 + 2 d_2 + 3 d_3}{2 d_1 + 8 d_2 + 18 d_3}\,.
\]
Moreover,   it is easy to show, taking limits, that the $\alpha$-limit set of each of these solutions is one of the located fixed points at infinity of $\M^G$, while the $\omega$-limit set, for all of them, is $\{0\}$.  Finally, for any $i$ we have $g_{i}(t)\to 0$ as $t\to\frac{1}{2\lambda_i}$, where $T=\frac{1}{2\lambda_i}$  depends on  the dimensions $d_i$, $i=1,2 ,3$ and so on the flag manifold $M=G/K$, while again  the assertion that  $g_{i}(t)$ are non-collapsed, for any $i=1, 2, 3,$ is a consequence of Corollary \ref{noncollapsed}.\\
\noindent  {\bf Case} $\bold{r=4.}$    The proof follows the lines of the previous cases $r=2, 3$. Thus,  we avoid to present similar arguments. Consider for example the case of the flag manifold $M=G/K$ with $b_2(M)=1$ and $r=4$, corresponding to $M_{*}:=\E_8/\U(1)\times\SU(3)\times\SO(10)$.  After multiplication with the positive   term $60x_1^2x_2^2x_3x_4$, the system (\ref{hrf}) of HRF  equation turns into the following polynomial system:
\begin{eqnarray*}
\dot{x}_1&=&x_1 x_2(-5 x_4 x_1^3 + x_4^2 x_1 x_2 - x_1^3 x_2 + 5 x_4 x_1 x_2^2 - 60 x_4 x_1 x_2 x_3 + 
   10 x_4 x_2^2 x_3 + 5 x_4 x_1 x_3^2 + x_1 x_2 x_3^2)\,,\\
\dot{x}_2&=&2 x_2 x_4 (4 x_1^3 x_2 - 4 x_1 x_2^3 + x_4 x_1^2 x_3 - 22 x_1^2 x_2 x_3 - 4 x_2^3 x_3 + 4 x_1 x_2 x_3^2)\,,\\
\dot{x}_3&=&3 x_1 x_2 x_3 (5 x_4 x_1^2 + x_4^2 x_2 - 20 x_4 x_1 x_2 + x_1^2 x_2 + 5 x_4 x_2^2 - 5 x_4 x_3^2 - 
   x_2 x_3^2)\,,\\
\dot{x}_4&=&-2 x_1 x_4 (8 x_4^2 x_2^2 - 8 x_1^2 x_2^2 + 5 x_4^2 x_1 x_3 + 20 x_1 x_2^2 x_3 - 8 x_2^2 x_3^2)\,.
\end{eqnarray*}
Note that the right-hand side of the equations above are  all homogeneous polynomials of degree $6$.
Hence we can apply  Proposition \ref{lochf}  to study the behaviour of this system at infinity  of $\M^{G}$. In the  $U$ chart and  by setting $x_4=0$, we obtain 
\begin{eqnarray*}
\dot{x}_1&=&-x_1 (-x_1^2 + x_1^2 x_2^2 - 13 x_1 x_3 + 13 x_1^3 x_3 + 44 x_1 x_2 x_3 - 60 x_1^2 x_2 x_3 + 
   18 x_1^3 x_2 x_3 - 3 x_1 x_2^2 x_3\\ &\ & + x_1^2 x_3^2 - 2 x_2 x_3^2)\,,\\
\dot{x}_2&=&-2 x_1 x_2 (-2 x_1 + 2 x_1 x_2^2 - 10 x_3 + 30 x_1 x_3 - 5 x_1^2 x_3 - 30 x_1 x_2 x_3 + 
   5 x_1^2 x_2 x_3 + 10 x_2^2 x_3\\ &\ & - x_1 x_3^2)\,,\\
\dot{x}_3&=&-x_3 (-17 x_1^2 + 40 x_1^2 x_2 - 15 x_1^2 x_2^2 - 5 x_1 x_3 + 5 x_1^3 x_3 - 60 x_1^2 x_2 x_3 + 
   10 x_1^3 x_2 x_3 + 5 x_1 x_2^2 x_3\\ &\ & + 17 x_1^2 x_3^2 + 10 x_2 x_3^2)\,,\\
\dot{x}_4&=&0\,.
\end{eqnarray*}
Again, last equation confirms that infinity remains invariant under the flow of the system.
Solving the system $\{\dot{x}_1=\dot{x}_2=\dot{x}_3=0\}$, we get exactly five fixed points, given by
$$(2,3,4)\,, \ (1.09705,0.770347,1.29696)\,,  \ (1.15607,1.01783,0.214618)\,,$$
$$
(0.649612,1.10943,1.06103)\,, \ (0.763357,1.00902,0.191009)\,.$$
These solutions give us, through the central projection $F$ of the $U$ chart on $\mathbb{R}^4_{\geq}$, the five fixed points $\fr{e}_1, \fr{e}_2, \fr{e}_3,\ \fr{e}_4,\ \fr{e}_5$. The fixed point $\fr{e}_1=(1,2,3,4)$ corresponds to the unique invariant K\"{a}hler-Einstein metric. Consider the Jacobian matrix corresponding to the system in the $U$ chart, that is
\[
\mathsf{Jac}:=\begin{pmatrix}
\frac{\partial f_1}{\partial x_1}  & \frac{\partial f_1}{\partial x_2} & \frac{\partial f_1}{\partial x_3} \\
\frac{\partial f_2}{\partial x_1} & \frac{\partial f_2}{\partial x_2} & \frac{\partial f_2}{\partial x_3} \\
\frac{\partial f_3}{\partial x_1} & \frac{\partial f_3}{\partial x_2} & \frac{\partial f_3}{\partial x_3}
\end{pmatrix}
\]
where
\begin{eqnarray*}
f_1&:=&-x_1 (-x_1^2 + x_1^2 x_2^2 - 13 x_1 x_3 + 13 x_1^3 x_3 + 44 x_1 x_2 x_3 - 60 x_1^2 x_2 x_3 + 
   18 x_1^3 x_2 x_3 - 3 x_1 x_2^2 x_3\\ &\ & + x_1^2 x_3^2 - 2 x_2 x_3^2)\,,\\
f_2&:=&   -2 x_1 x_2 (-2 x_1 + 2 x_1 x_2^2 - 10 x_3 + 30 x_1 x_3 - 5 x_1^2 x_3 - 30 x_1 x_2 x_3 + 
   5 x_1^2 x_2 x_3 + 10 x_2^2 x_3\\ &\ & - x_1 x_3^2)\,,\\
f_3&:=&-x_3 (-17 x_1^2 + 40 x_1^2 x_2 - 15 x_1^2 x_2^2 - 5 x_1 x_3 + 5 x_1^3 x_3 - 60 x_1^2 x_2 x_3 + 
   10 x_1^3 x_2 x_3 + 5 x_1 x_2^2 x_3\\ &\ & + 17 x_1^2 x_3^2 + 10 x_2 x_3^2)\,.
\end{eqnarray*}
Calculating the Jacobian matrix 
at the corresponding fixed point $(2,3,4)$, we find it to be equal to:
\[
\mathsf{Jac}_{(2, 3, 4)}=\begin{pmatrix}
-1600 &  368 &  32\\
960 & -1248 & 192\\
320 & 1760 & -1696
\end{pmatrix}.
\]
This has 3 negative eigenvalues, thus we conclude that the fixed point $\fr{e}_1$ possesses a 3-dimensional stable manifold, contained in the infinity of $\M^G$, while the straight line joining $\fr{e}_1$ with the origin of $\M^G$ corresponds to its 1-dimensional unstable manifold.

Computing the Jacobian matrix at the other fixed points and calculating their eigenvalues, we conclude that all the other fixed points, corresponding to non-K\"ahler, non-isometric invariant Einstein metrics (see \cite{Chry210}), have a 2-dimensional stable manifold, located in the infinity of $\M^G$, and a 2-dimensional unstable manifold, one direction of which is the straight line in $\M^G$ tending towards the origin, with the exception of fixed point $\fr{e}_5$. The Jacobian matrix there becomes
\[
\mathsf{Jac}_{(0.763357,1.00902,0.191009)}=\begin{pmatrix}
1.33763 & -1.13506 & 0.251494 \\
0.0597364 & -4.80309 & 0.447799\\
-0.13328 & -0.320307 & 3.98402
\end{pmatrix}
\]
which has one negative eigenvalue and two positive ones. Thus, $\fr{e}_5$ possesses a 1-dimensional stable manifold, contained in the infinity of $\M^G$, and a 3-dimensional unstable manifold, one direction of which is the straight line emanating from $\fr{e}_5$ and tending to the origin of $\M^G$.  Invariant lines, corresponding to non-collapsed ancient solutions $g_{i}(t)=(1-2\lambda_i t)\cdot\fr{e}_i$, $t\in(-\infty, \frac{1}{2\lambda_i})$, can be found as in the previous cases, confirming once again that their $\omega$--limit sets are equal to $\{0\}$, while the $\alpha$--limit set is the corresponding fixed point at infinity of $\M^G$. This  proves our claims. 
 The Ricci flow equations on the rest three homogeneous spaces of that type can be treated similarly. \\
\noindent  {\bf Case} $\bold{r=5.}$  After multiplication with the positive   term $60x_{1}^2 x_{2}^2 x_{3}x_{4}x_{5}$, system (\ref{hrf}) turns into the following polynomial system:
\begin{eqnarray*}
\dot{x}_1&=&-x_1x_2(x_1^3 x_2 x_3 - x_1 x_2 x_3 x_4^2 + 3 x_1^3 x_2 x_5 - 
   3 x_1 x_2 x_3^2 x_5 + 6 x_1^3 x_4 x_5 - 6 x_1 x_2^2 x_4 x_5 +\\ &\ &+
   60 x_1 x_2 x_3 x_4 x_5 - 9 x2^2 x_3 x_4 x_5 - 6 x_1 x_3^2 x_4 x_5 - 
   3 x_1 x_2 x_4^2 x_5 - x_1 x_2 x_3 x_5^2)\,,\\
\dot{x}_2&=&2x_2x_4(-x_1^2 x_2^3 + x_1^2 x_2 x_3^2 + 4 x_1^3 x_2 x_5 - 4 x_1 x_2^3 x_5 - 
   24 x_1^2 x_2 x_3 x_5 - 3 x_2^3 x_3 x_5 + 4 x_1 x_2 x_3^2 x_5 +\\ &\ &+ 
   2 x_1^2 x_3 x_4 x_5 + x_1^2 x_2 x_5^2)\,,\\
\dot{x}_3&=&3x_1x_2x_3(x_1 x_2^2 x_4 - x_1 x_3^2 x_4 + 2 x_1^2 x_2 x_5 - 2 x_2 x_3^2 x_5 + 
   4 x_1^2 x_4 x_5 - 20 x_1 x_2 x_4 x_5 + 4 x_2^2 x_4 x_5 - \\ &\ & -4 x_3^2 x_4 x_5 + 
   2 x_2 x_4^2 x_5 + x_1 x_4 x_5^2)\,,\\
\dot{x}_4&=&-2x_1x_4(-2 x_1^2 x_2^2 x_3 + 2 x_2^2 x_3 x_4^2 - 6 x_1^2 x_2^2 x_5 + 
   24 x_1 x_2^2 x_3 x_5 - 6 x_2^2 x_3^2 x_5 +6 x_2^2 x_4^2 x_5 + \\ &\ &+ 
   3 x_1 x_3 x_4^2 x_5 - 2 x_2^2 x_3 x_5^2)\,,\\
\dot{x}_5&=&5x_1x_2x_5(2 x_1^2 x_2 x_3 + 3 x_1 x_2^2 x_4 - 12 x_1 x_2 x_3 x_4 + 
   3 x_1 x_3^2 x_4 + 2 x_2 x_3 x_4^2 - 2 x_2 x_3 x_5^2 - 3 x_1 x_4 x_5^2)\,.   
\end{eqnarray*}
Note that the right-hand side consists of homogeneous polynomials of degree $7$ and we can use  Proposition \ref{lochf}  to study the behaviour of this system at infinity  of $\M^{G}$.  Using the expressions given above, the system at infinity, written in the $U$ chart and setting $x_5=0$, reads as follows:
\begin{eqnarray*}
\dot{x}_1&=&-x_1 (-x_1^2 x_2 + 2 x_1^3 x_3 - 2 x_1 x_2^2 x_3 + x_1^2 x_2 x_3^2 - 3 x_1^2 x_4 + 
   3 x_1^2 x_2^2 x_4 - 14 x_1 x_3 x_4 + 14 x_1^3 x_3 x_4+ \\ &\ & + 48 x_1 x_2 x_3 x_4 - 
   60 x_1^2 x_2 x_3 x_4 + 15 x_1^3 x_2 x_3 x_4 - 2 x_1 x_2^2 x_3 x_4 + 
   3 x_1^2 x_3^2 x_4 - 4 x_2 x_3^2 x_4+ \\ &\ & + x_1^2 x_2 x_4^2 - 2 x_1 x_3 x_4^2)\,,\\
\dot{x}_2&=&-x_1 x_2 (-x_1 x_2 - 3 x_1^2 x_3 + 3 x_2^2 x_3 + x_1 x_2 x_3^2 - 9 x_1 x_4 + 
   9 x_1 x_2^2 x_4 - 18 x_3 x_4 + 60 x_1 x_3 x_4- \\ &\ & - 6 x_1^2 x_3 x_4 - 
   60 x_1 x_2 x_3 x_4 + 9 x_1^2 x_2 x_3 x_4 + 18 x_2^2 x_3 x_4 - 3 x_1 x_3^2 x_4 + 
   x_1 x_2 x_4^2 - 3 x_3 x_4^2)\,,\\
\dot{x_3}&=&-x_3 (-5 x_1^2 x_2 + 5 x_1^2 x_2 x_3^2 - 15 x_1^2 x_4 + 48 x_1^2 x_2 x_4 - 
   9 x_1^2 x_2^2 x_4 - 6 x_1 x_3 x_4 + 6 x_1^3 x_3 x_4- \\ &\ & - 60 x_1^2 x_2 x_3 x_4 + 
   9 x_1^3 x_2 x_3 x_4 + 6 x_1 x_2^2 x_3 x_4 + 15 x_1^2 x_3^2 x_4 + 
   6 x_2 x_3^2 x_4 - 3 x_1^2 x_2 x_4^2)\,,\\
\dot{x}_4&=&-x_1 x_4 (-11 x_1 x_2 - 15 x_1^2 x_3 + 60 x_1 x_2 x_3 - 15 x_2^2 x_3 - 
   9 x_1 x_2 x_3^2 - 3 x_1 x_4+ \\ &\ & + 3 x_1 x_2^2 x_4 - 6 x_3 x_4 + 6 x_1^2 x_3 x_4 - 
   60 x_1 x_2 x_3 x_4 + 9 x_1^2 x_2 x_3 x_4 + 6 x_2^2 x_3 x_4 + 3 x_1 x_3^2 x_4+ \\ &\ & + 
   11 x_1 x_2 x_4^2 + 15 x_3 x_4^2)\,,\\
\dot{x}_5&=&0\,.   
\end{eqnarray*}
Similarly with before, last equation confirms that infinity remains invariant under the flow of the system.
Now, by solving the system $\{\dot{x}_1=\dot{x}_2=\dot{x}_3=\dot{x}_4=0\}$, we get exactly six fixed points at the infinity of $\M^G$, given by:
\begin{eqnarray*}
&& (2,3,4,5),\ (0.599785,1.08371,0.901823,1.22291), \\
&& (1.02137,0.546007,1.05352,1.10879),\ (1.08294,1.04088,0.532615,1.10351), \\
&& (0.720713,1.02546,0.475234,1.07095),\ (1.03732,1.04718,1.03082,0.29862).
\end{eqnarray*}
As before, these fixed points  induce via the central projection $F$ the  explicit presentations $\fr{e}_i,\ i=1,\ldots,6$. The fixed point represented by $\fr{e}_1=(1,2,3,4,5)$ corresponds to the unique invariant K\"{a}hler-Einstein metric on $M=\E_8/\U(1)\times \SU(4)\times \SU(5)$ and  eigenvalues calculations show that it possesses a 4-dimensional stable manifold, contained in the infinity of $\M^G$, and a 1-dimensional unstable manifold which coincide with  a straight tending to the origin. The other three fixed points, corresponding to non-K\"ahler, non-isometric invariant Einstein metrics (see \cite{CS1}), have a 3-dimensional stable manifold, located in the infinity of $\M^G$, and a 2-dimensional unstable manifold, one direction of which is the straight line in $\M^G$ tending to the origin, with the exception of the fixed point $\fr{e}_5$, which possesses a 2-dimensional stable manifold, contained in the infinity of $\M^G$, and a 3-dimensional unstable manifold. The ancient solutions $g_i(t)=(1-2\lambda_it)\cdot\fr{e}_i$, $i=1, \ldots, 6$ are defined on the open  interval $(-\infty,1/2\lambda_i)$, where the corresponding Einstein constant $\lambda_i$ is given by
\begin{eqnarray*}
\lambda_1=11/60,  &\text{for}&  \fr{e}_1=(1,2,3,4,5),\\
\lambda_2=0.37877 , &\text{for}& \fr{e}_2=(1,0.599785, 1.08371, 0.901823, 1.22291),\\
\lambda_3=0.365507,  &\text{for}&  \fr{e}_3=(1,1.02137, 0.546007, 1.05352, 1.10879),\\
\lambda_4=0.339394, &\text{for}& \fr{e}_4=(1,1.08294, 1.04088, 0.532615, 1.10351),\\
\lambda_5=0.386982,  &\text{for}& \fr{e}_5=(1,0.720713, 1.02546, 0.475234, 1.07095),\\
\lambda_6=0.337271, &\text{for}& \fr{e}_6=(1,1.03732, 1.04718, 1.03082, 0.29862).
\end{eqnarray*}
Using these constants and by taking limits, as above, we obtain the rest claims for $r=5$.\\
\noindent  {\bf Case} $\bold{r=6.}$ To reduce the system (\ref{hrf})   to a   polynomial dynamical system, a short computation shows that we must multiply with  the positive   term $60x_{1}^2 x_{2}^2 x_{3}^2x_{4}x_{5}x_6$. This gives the following:
{\small \begin{eqnarray*}
\dot{x}_1&=&-x_1x_2x_3(x_1^3 x_2 x_3 x_4 - x_1 x_2 x_3 x_4 x_5^2 + 2 x_1^3 x_2 x_3 x_6 - 
   2 x_1 x_2 x_3 x_4^2 x_6+\\ &\ & + 4 x_1^3 x_2 x_5 x_6 - 4 x_1 x_2 x_3^2 x_5 x_6 + 
   6 x_1^3 x_4 x_5 x_6 - 6 x_1 x_2^2 x_4 x_5 x_6 + 60 x_1 x_2 x_3 x_4 x_5 x_6-\\ &\ & - 
   8 x_2^2 x_3 x_4 x_5 x_6 - 6 x_1 x_3^2 x_4 x_5 x_6 - 4 x_1 x_2 x_4^2 x_5 x_6 - 
   2 x_1 x_2 x_3 x_5^2 x_6 - x_1 x_2 x_3 x_4 x_6^2)\,,\\
\dot{x}_2&=&2x_2x_3(-x_1^2 x_2^3 x_3 x_5 + x_1^2 x_2 x_3 x_4^2 x_5 - x_1^2 x_2^3 x_4 x_6 + x_1^2 x_2 x_3^2 x_4 x_6+\\ &\ & + 3 x_1^3 x_2 x_4 x_5 x_6 - 3 x_1 x_2^3 x_4 x_5 x_6 - 
   26 x_1^2 x_2 x_3 x_4 x_5 x_6 - 2 x_2^3 x_3 x_4 x_5 x_6+\\ &\ & + 
   3 x_1 x_2 x_3^2 x_4 x_5 x_6 + 3 x_1^2 x_3 x_4^2 x_5 x_6 + x_1^2 x_2 x_4 x_5^2 x_6 +
    x_1^2 x_2 x_3 x_5 x_6^2)\,,\\
\dot{x}_3&=&3 x_1x_2x_3x_6 (x_1 x_2^2 x_3 x_4 - x_1 x_3^3 x_4 + 2 x_1^2 x_2 x_3 x_5 - 
   2 x_2 x_3^3 x_5 + 3 x_1^2 x_3 x_4 x_5-\\ &\ & - 20 x_1 x_2 x_3 x_4 x_5 + 
   3 x_2^2 x_3 x_4 x_5 - 3 x_3^3 x_4 x_5 + 2 x_2 x_3 x_4^2 x_5 + x_1 x_3 x_4 x_5^2 + 
   x_1 x_2 x_4 x_5 x_6)\,,\\
\dot{x}_4&=&2x_1x_3x_4(2 x_1 x_2^3 x_3 x_5 - 2 x_1 x_2 x_3 x_4^2 x_5 + 
   2 x_1^2 x_2^2 x_3 x_6 - 2 x_2^2 x_3 x_4^2 x_6+\\ &\ & + 4 x_1^2 x_2^2 x_5 x_6 - 
   24 x_1 x_2^2 x_3 x_5 x_6 + 4 x_2^2 x_3^2 x_5 x_6 - 4 x_2^2 x_4^2 x_5 x_6 - 
   3 x_1 x_3 x_4^2 x_5 x_6+\\ &\ & + 2 x_2^2 x_3 x_5^2 x_6 + 2 x_1 x_2 x_3 x_5 x_6^2)\,,\\
\dot{x}_5&=&5x_1x_2x_3x_5 (x_1^2 x_2 x_3 x_4 - x_2 x_3 x_4 x_5^2 + 2 x_1^2 x_2 x_3 x_6 + 
   2 x_1 x_2^2 x_4 x_6-\\ &\ & - 12 x_1 x_2 x_3 x_4 x_6 + 2 x_1 x_3^2 x_4 x_6 + 
   2 x_2 x_3 x_4^2 x_6 - 2 x_2 x_3 x_5^2 x_6 - 2 x_1 x_4 x_5^2 x_6 + 
   x_2 x_3 x_4 x_6^2)\,,\\
   \dot{x}_6&=&6x_1x_2x_6(x_1^2 x_2 x_3^2 x_4 + 2 x_1 x_2^2 x_3^2 x_5 - 
   8 x_1 x_2 x_3^2 x_4 x_5 + 2 x_1 x_3^2 x_4^2 x_5+\\ &\ & + x_2 x_3^2 x_4 x_5^2 - 
   x_2 x_3^2 x_4 x_6^2 - 2 x_1 x_3^2 x_5 x_6^2 - x_1 x_2 x_4 x_5 x_6^2)\,.  
\end{eqnarray*}}
Note that the right-hand side consists of homogeneous polynomials of degree $9$.
Hence again we can apply  the  Poincar\'{e} compactification procedure  to study the behaviour of this system at infinity of $\M^G$. Using the expressions given above, the system at infinity, written in the $U$ chart and setting $x_6=0$, reads as follows:
{\small \begin{eqnarray*}
\dot{x}_1&=&-x_1 x_2 (-x_1^2 x_2 x_3 + 2 x_1^3 x_2 x_4 - 2 x_1 x_2 x_3^2 x_4 + 
   x_1^2 x_2 x_3 x_4^2 - 2 x_1^2 x_2 x_5 + 2 x_1^3 x_3 x_5 -\\ &\ &- 2 x_1 x_2^2 x_3 x_5 + 
   2 x_1^2 x_2 x_3^2 x_5 - 4 x_1^2 x_4 x_5 + 4 x_1^2 x_2^2 x_4 x_5 - 
   12 x_1 x_3 x_4 x_5+\\ &\ & + 12 x_1^3 x_3 x_4 x_5 + 52 x_1 x_2 x_3 x_4 x_5 - 
   60 x_1^2 x_2 x_3 x_4 x_5 + 12 x_1^3 x_2 x_3 x_4 x_5 + 4 x_1^2 x_3^2 x_4 x_5-\\ &\ & - 
   6 x_2 x_3^2 x_4 x_5 + 2 x_1^2 x_2 x_4^2 x_5 - 2 x_1 x_3 x_4^2 x_5 + 
   x_1^2 x_2 x_3 x_5^2 - 2 x_1 x_2 x_4 x_5^2)\,,\\
\dot{x}_2&=&-x_1 x_2 (-x_1 x_2^2 x_3 + x_1 x_2^2 x_3 x_4^2 - 2 x_1 x_2^2 x_5 - 
   3 x_1^2 x_2 x_3 x_5 + 3 x_2^3 x_3 x_5+\\ &\ & + 2 x_1 x_2^2 x_3^2 x_5 - 
   10 x_1 x_2 x_4 x_5 + 10 x_1 x_2^3 x_4 x_5 - 15 x_2 x_3 x_4 x_5 + 
   60 x_1 x_2 x_3 x_4 x_5-\\ &\ & - 3 x_1^2 x_2 x_3 x_4 x_5 - 60 x_1 x_2^2 x_3 x_4 x_5 + 
   8 x_1^2 x_2^2 x_3 x_4 x_5 + 15 x_2^3 x_3 x_4 x_5 -\\ &\ &- 2 x_1 x_2 x_3^2 x_4 x_5 + 
   2 x_1 x_2^2 x_4^2 x_5 - 3 x_2 x_3 x_4^2 x_5 + x_1 x_2^2 x_3 x_5^2 - 
   3 x_1 x_3 x_4 x_5^2)\,,\\
\dot{x}_3&=&-x_2 x_3 (-x_1^2 x_2 x_3 - 4 x_1^3 x_2 x_4 + 4 x_1 x_2 x_3^2 x_4 + 
   x_1^2 x_2 x_3 x_4^2 - 6 x_1^2 x_2 x_5+\\ &\ & + 6 x_1^2 x_2 x_3^2 x_5 - 
   12 x_1^2 x_4 x_5 + 48 x_1^2 x_2 x_4 x_5 - 4 x_1^2 x_2^2 x_4 x_5 - 
   6 x_1 x_3 x_4 x_5+\\ &\ & + 6 x_1^3 x_3 x_4 x_5 - 60 x_1^2 x_2 x_3 x_4 x_5 + 
   8 x_1^3 x_2 x_3 x_4 x_5 + 6 x_1 x_2^2 x_3 x_4 x_5 +\\ &\ &+ 12 x_1^2 x_3^2 x_4 x_5 + 
   6 x_2 x_3^2 x_4 x_5 - 2 x_1^2 x_2 x_4^2 x_5 + x_1^2 x_2 x_3 x_5^2 - 
   4 x_1 x_2 x_4 x_5^2)\,,\\
\dot{x}_4&=&-2 x_1 x_2 x_4 (-3 x_1 x_2 x_3 + 3 x_1 x_2 x_3 x_4^2 - 6 x_1 x_2 x_5 - 
   5 x_1^2 x_3 x_5 + 30 x_1 x_2 x_3 x_5-\\ &\ & - 5 x_2^2 x_3 x_5 - 4 x_1 x_2 x_3^2 x_5 - 
   2 x_1 x_4 x_5 + 2 x_1 x_2^2 x_4 x_5 - 3 x_3 x_4 x_5 + 3 x_1^2 x_3 x_4 x_5-\\ &\ & - 
   30 x_1 x_2 x_3 x_4 x_5 + 4 x_1^2 x_2 x_3 x_4 x_5 + 3 x_2^2 x_3 x_4 x_5 + 
   2 x_1 x_3^2 x_4 x_5 + 6 x_1 x_2 x_4^2 x_5+\\ &\ & + 5 x_3 x_4^2 x_5 - 2 x_1 x_2 x_3 x_5^2)\,,\\
\dot{x}_5&=&-x_1 x_5 (-7 x_1 x_2^2 x_3 - 12 x_1^2 x_2^2 x_4 + 48 x_1 x_2^2 x_3 x_4 - 
   12 x_2^2 x_3^2 x_4 - 5 x_1 x_2^2 x_3 x_4^2-\\ &\ & - 2 x_1 x_2^2 x_5 + 
   2 x_1 x_2^2 x_3^2 x_5 - 4 x_1 x_2 x_4 x_5 + 4 x_1 x_2^3 x_4 x_5-\\ &\ & - 
   6 x_2 x_3 x_4 x_5 + 6 x_1^2 x_2 x_3 x_4 x_5 - 60 x_1 x_2^2 x_3 x_4 x_5 + 
   8 x_1^2 x_2^2 x_3 x_4 x_5 +\\ &\ &+ 6 x_2^3 x_3 x_4 x_5 + 4 x_1 x_2 x_3^2 x_4 x_5 + 
   2 x_1 x_2^2 x_4^2 x_5 + 7 x_1 x_2^2 x_3 x_5^2+\\ &\ & + 12 x_2^2 x_4 x_5^2 + 
   6 x_1 x_3 x_4 x_5^2)\,,\\
   \dot{x}_6&=&0\,.  
\end{eqnarray*}}
By the  last equation one deduces that the infinity of $\M^G$ remains invariant under the flow of the system.
Solving now the system $\{\dot{x}_1=\dot{x}_2=\dot{x}_3=\dot{x}_4=\dot{x}_5=0\}$, we get exactly  five fixed points given by:
$$(2,3,4,5,6),\ (0.823084,1.1467,1.17377,1.42664,1.46519),$$
$$(0.986536,0.636844,1.06853,1.13323,0.921127),$$
$$(0.90422,0.778283,0.927483, 1.03408,0.359949),$$
$$(0.954875,0.965321,1.00534,0.290091,1.01965).$$
These solutions, projected through the central projection $F$, induce the fixed points $\fr{e}_i$ at infinity of $\M^G$,  namely
\begin{eqnarray*}
&&\fr{e}_1=(1,2,3,4,5,6),\\
&& \fr{e}_2=(1,0.823084,1.1467,1.17377,1.42664,1.46519),\\
&& \fr{e}_3=(1,0.986536,0.636844,1.06853,1.13323,0.921127),\\
&& \fr{e}_4=(1,0.90422,0.778283, 0.927483,1.03408,0.359949),\\
&& \fr{e}_5=(1,0.954875,0.965321,1.00534,0.290091,1.01965). 
\end{eqnarray*}
These points are in bijective correspondence with  non-isometric invariant Einstein metrics on $M=G/K=\E_8/\U(1)\times \SU(2)\times \SU(3)\times \SU(5)$ (see \cite{CS1}), and determine the non-collapsed ancient solutions $g_i(t)=(1-2\lambda_it)\cdot\fr{e}_i$, where  the corresponding Einstein constant $\lambda_i$ has the form
\[
\lambda_1=3/20,\quad \lambda_2=0.313933,\quad \lambda_3=0.348603,\quad \lambda_4=0.367518,\quad  \lambda_5=0.349296.
\]
All the other conclusions are obtained  in an analogous way as before. This completes the proof.
\end{proof}

\begin{remark}
 For $r\geq  3$ a statement of $\omega$-limits of general solutions of (\ref{hrf}), as in the assertion (4) of Theorem \ref{mainthem},  is not presented, since for these cases  a Lyapunov function is hard to be computed. 
\end{remark}
Let us  now take a view of the limit behaviour of the scalar curvature for the solutions $g_{i}(t)$  given  in Theorem \ref{mainthem}, and 
verify the statements of Proposition \ref{ak}. We do this by treating examples with $r=2, 3$.
\begin{example}\label{s2c}
Let $M=G/K$ be a flag manifold with $r=2$. Then, an application of (\ref{sca2c})  shows that both $\Scal(g_{i}(t))$ are  {\it positive hyperbolas}, given by
\begin{eqnarray*}
\Scal(g_{1}(t))&=&\frac{(d_1 + d_2) (d_1 + 2 d_2)}{2(d_1+4d_2)-2t(d_1+2d_2)}\,,\\ 
\Scal(g_2(t))&=&\frac{(d_1 + d_2) (d_1^2 + 6 d_1 d_2 + 4 d_2^2)}{2\big((d_1^2+6d_1d_2+8d_2^2)-t(d_1^2+6d_1d_2+4d_2^2)\big)}\,,
\end{eqnarray*}
respectively.  Therefore, they both are increasing in   $(-\infty, \frac{1}{2\lambda_i})$, which is the open interval which are defined, i.e.  $\Scal'(g_{1}(t))$ and $\Scal'(g_{2}(t))$ are strictly positive. 
The limit of $\Scal(g_{i}(t))$ as $t\to \frac{1}{2\lambda_i}$  must be considered only  from below, and it is direct to check that 
\[
 \lim_{t\to\frac{1}{2\lambda_i}}\Scal(g_{i}(t))=+\infty\,,\quad\lim_{t\to-\infty}\Scal(g_{i}(t))=0\,,\quad\forall \ i=1, 2.
\]
 Hence,   $\Scal(g_{i}(t))>0$, for any  $t\in(-\infty, \frac{1}{2\lambda_i})$ and for any $i=1, 2$, as it should be for ancient solutions in combination with the non-existence of  Ricci flat metrics  (\cite{Kot10, Laf15}).   Let us present the graphs of $\Scal(g_1(t))$ and $\Scal(g_{2}(t))$   for the flag spaces $\G_2/\U(2)$ and  $\F_4/\Sp(3)\times\U(1)$, both with $r=2$. We list all  the related details  below, together  with the graphs of $\Scal(g_{i}(t))$  for the corresponding  intervals of definition $(-\infty, \frac{1}{2\lambda_i})$ (see Figure \ref{fig:1} and \ref{fig:2}, for $i=1, 2$, respectively).
\begin{table}[ht]
\centering
\renewcommand\arraystretch{2.2}
 \begin{tabular}{c | c | c | c | c | c | c  }
 $M=G/K$ & $d_1$ & $d_2$ & $1/2\lambda_1$ & $1/2\lambda_2$ & $\Scal(g_1(t))$ & $\Scal(g_2(t))$ \\
 \thickline
 $\G_2/\U(2)$ & 8 & 2 & 4/3 & 12/11 & $\displaystyle\frac{120}{32 - 24 t}$ & $\displaystyle\frac{1760}{384-362t}$ \\
 $\F_4/\Sp(3)\times\U(1)$  & 28 & 2 & 9/8 & 72/71 &  $\displaystyle\frac{960}{72 - 64 t}$ & $ \displaystyle\frac{34080}{2304-2272t}$\\
 \hline
\end{tabular}
\end{table}
 \begin{figure}[h!]
 \centering
   {\small{ \includegraphics[scale=0.5]{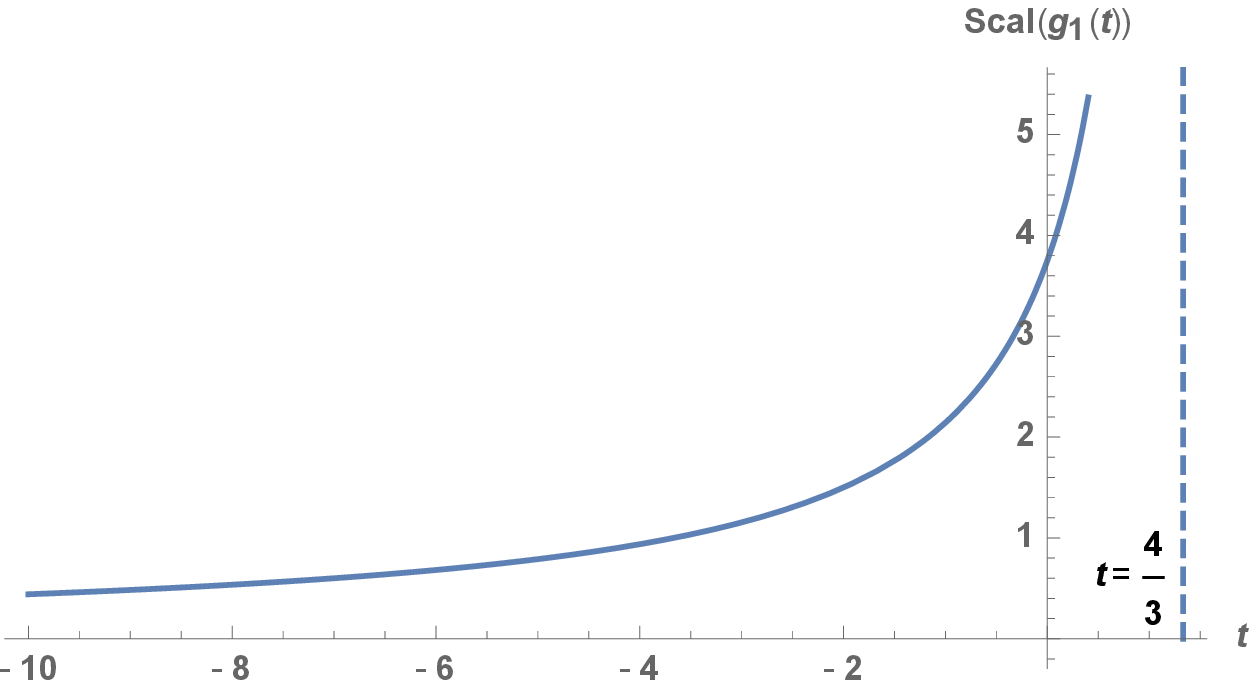}}} \ \ \  \ \ \  \ \ \  {\small{ \includegraphics[scale=0.5]{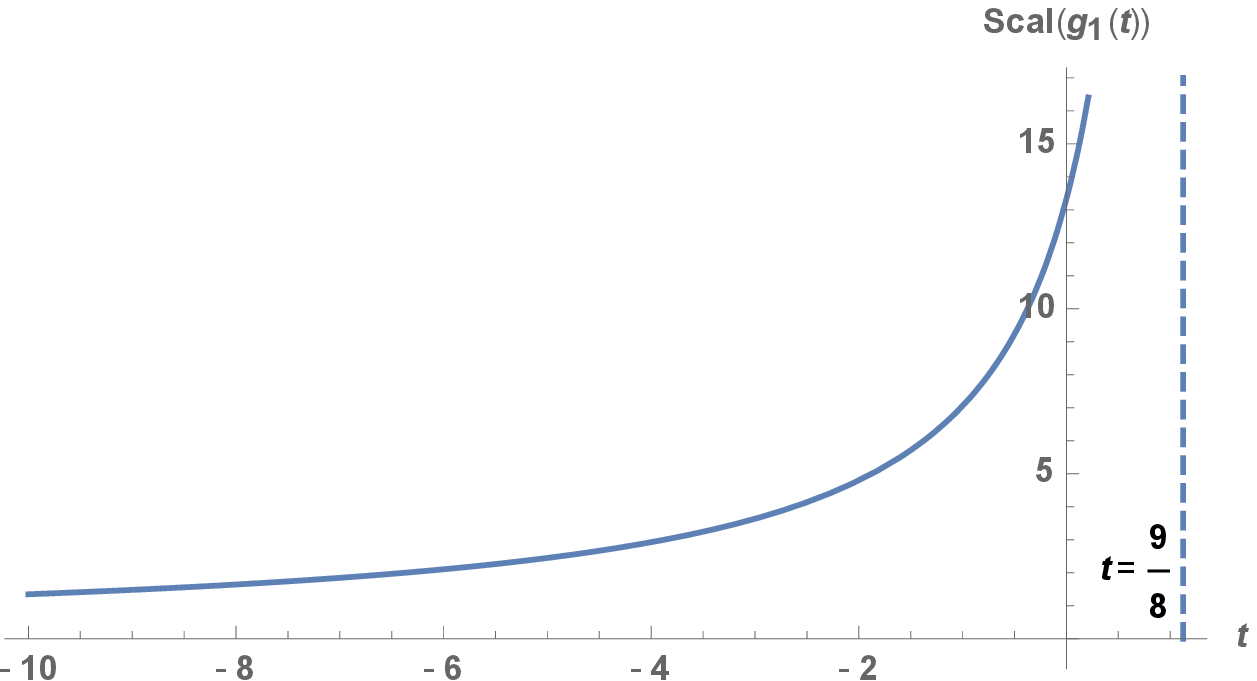}}}
     \caption{The graph of $\Scal(g_{1}(t))$ for  $\G_2/\U(2)$ (left) and $\F_4/\Sp(3)\times\U(1)$ (right).}
    \label{fig:1}
    \end{figure}
   \begin{figure}[h!]
 \centering
   {\small{ \includegraphics[scale=0.5]{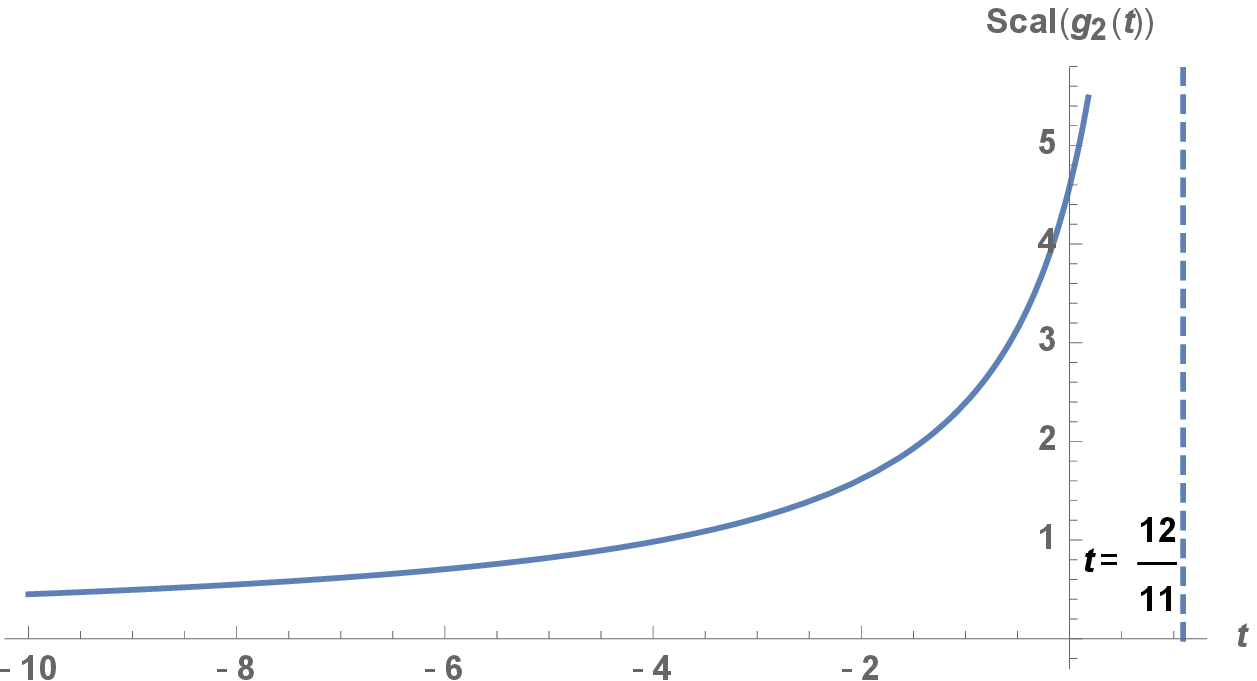}}} \ \ \  \ \ \  \ \ \  {\small{ \includegraphics[scale=0.5]{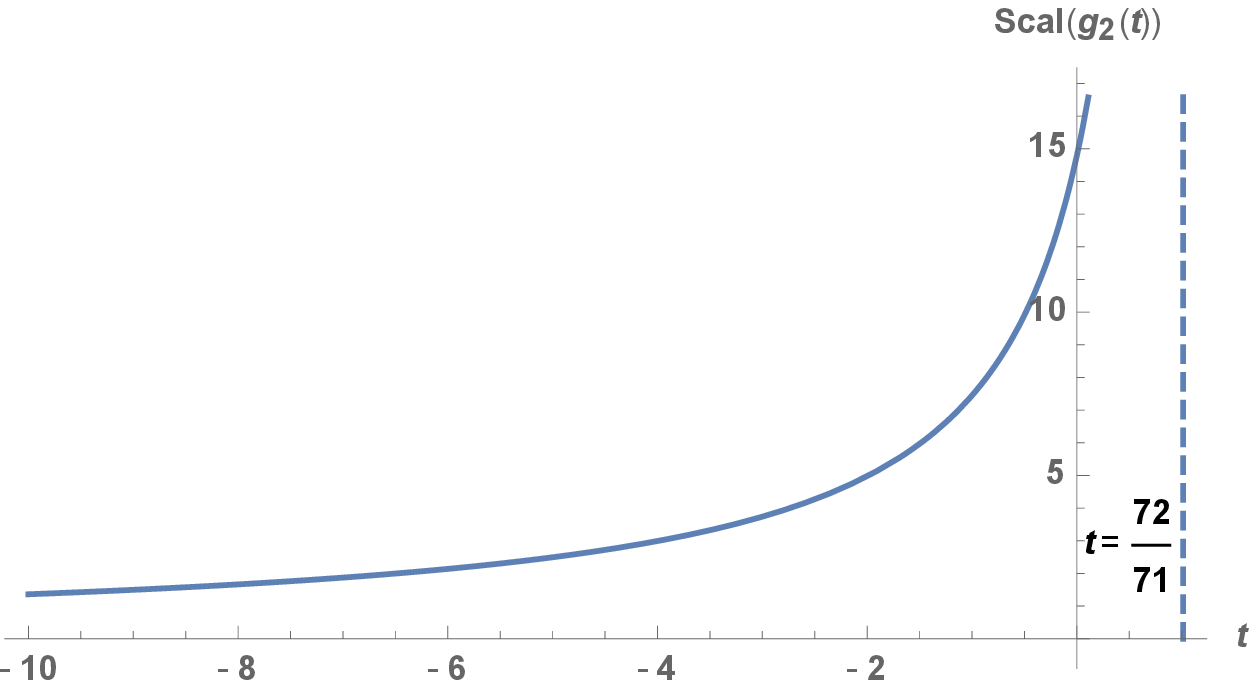}}}
     \caption{The graph of $\Scal(g_{2}(t))$ for  $\G_2/\U(2)$ (left) and $\F_4/\Sp(3)\times\U(1)$ (right).}
    \label{fig:2}
    \end{figure}

\noindent    The graphs now of the ricci components of the solutions $g_{j}(t)$,which  we denote by 
    \[
    \ric_{ij}(t):=\ric_{i}(g_{j}(t))\,,\quad   1\leq i\leq r\,, \quad  1\leq j\leq N, 
    \]
 where again $N$ represents the number of fixed points of HRF at infinity of $\M^G$,     are very similar. For example, for any  flag manifold  $M=G/K$ with $r=2$ and for the first ancient solution $g_{1}(t)$ passing through the invariant  K\"ahler-Einstein metric $g_{1}(0)=g_{KE}$, we compute
    \[
 \ric_{11}(t)=\ric_1(g_1(t))=\ric_{21}(t)=\ric_2(g_1(t))=\frac{d_1 + 2 d_2}{2 (d_1 + 4d_2) - 2 (d_1 + 2 d_2) t}\,.
    \]
    For the solution $g_{2}(t)$ passing from the non-K\"ahler invariant Einstein metric $g_{2}(0)=g_{E}$ we obtain
      \[
 \ric_{12}(t)=\ric_1(g_2(t))=\ric_{22}(t)=\ric_2(g_2(t))=\frac{d_1^2 + 6 d_1 d_2 + 4 d_2^2}{
 2(d_1^{2}+6d_1d_2+8d_{2}^{2}) -2(4d_2^2 + d_1^2 + 6 d_1 d_2)t}\,.
    \]
    Note that the equalities $\ric_{11}(t)=\ric_{21}(t)$ and  $\ric_{12}(t)=\ric_{22}(t)$ occur since $g_{i}(t)$ $(i=1, 2)$ are both 1-parameter families of invariant Einstein metrics on $M=G/K$ (as we mentioned in Section \ref{ancient}).
For instance, for $\G_2/\U_2$ with $r=2$, the above formulas reduce to
    \[
    \ric_{11}(t)=\ric_{21}(t)=\frac{12}{32 - 24 t}\,,\quad  \ric_{12}(t)=\ric_{22}(t)=\frac{88}{192-176t} \,,
    \]
    with $\ric_{11}(0)=\ric_{21}(0)=\lambda_1=3/8$ and $ \ric_{12}(0)=\ric_{22}(0)=\lambda_2=11/24$, respectively. The corresponding graphs are given in Figure \ref{fig:1/2}.
     \begin{figure}[h!]
 \centering
   {\small{ \includegraphics[scale=0.4]{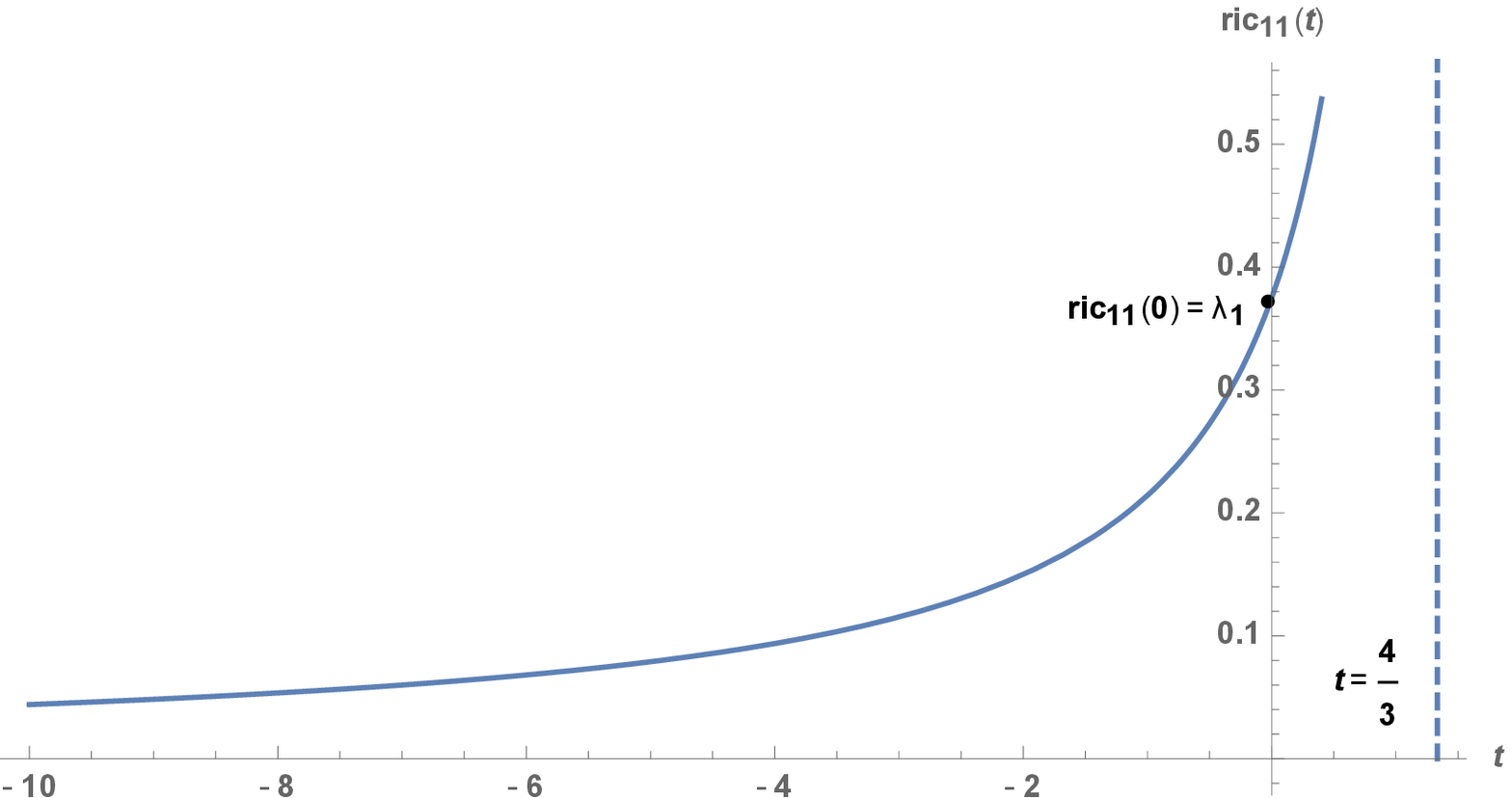}}} \    {\small{ \includegraphics[scale=0.4]{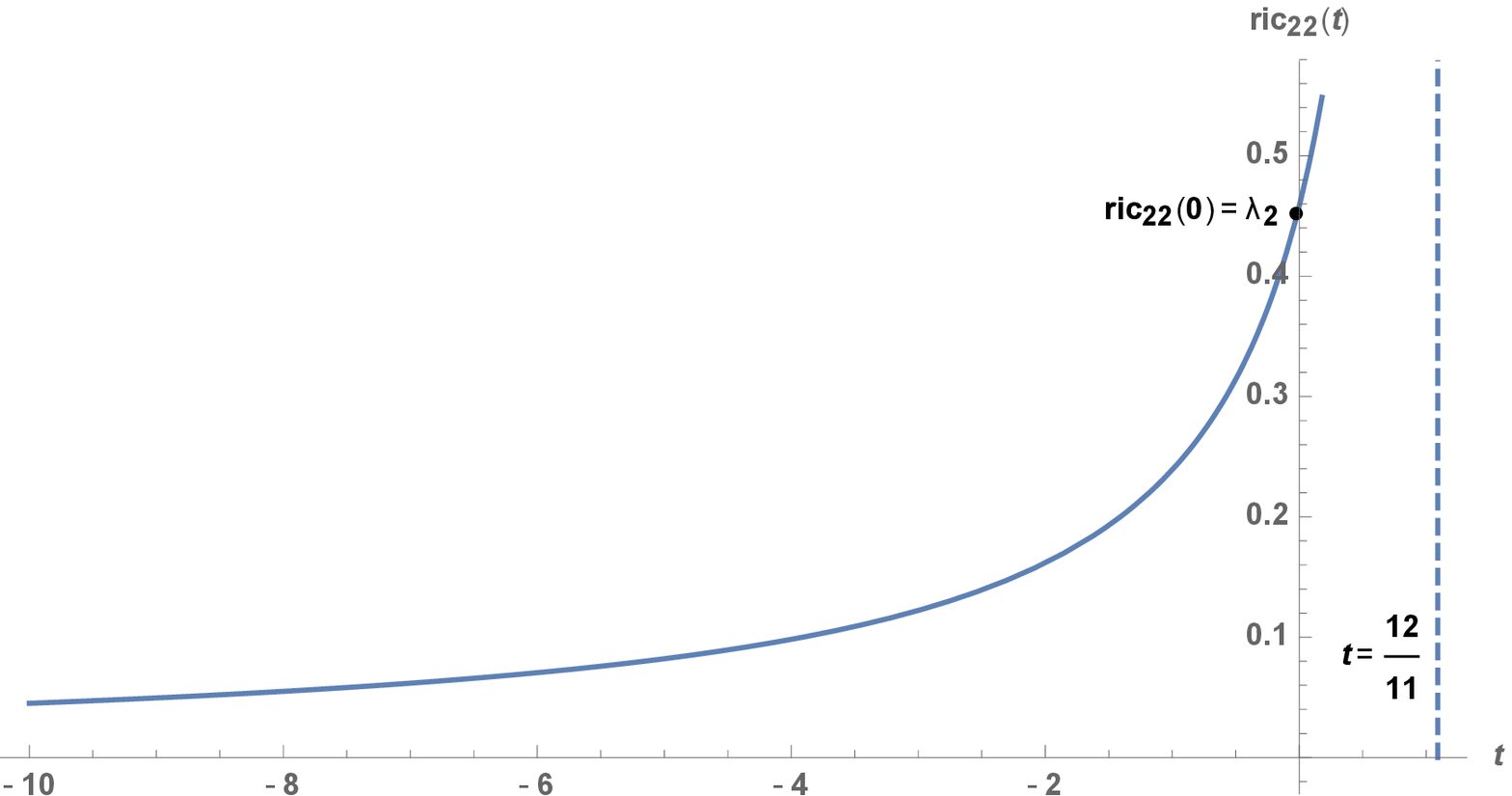}}}
     \caption{The graphs of $\ric_{11}(t)=\ric_{21}(t)$  and $\ric_{12}(t)=\ric_{22}(t)$ for  $\G_2/\U(2)$ with $\fr{m}=\fr{m}_1\oplus\fr{m}_2$.}
    \label{fig:1/2}
    \end{figure}
    \end{example}

\begin{example}\label{3compexa}
Let $M=G/K$ be a flag manifold with $b_{2}(M)=1$ and $r=3$.  Let us describe the  asymptotic properties of the scalar curvature  $\Scal(g_{1}(t))$, related  to the solution  $g_{1}(t)=(1-2\lambda_{1}t)\cdot\fr{e}_{1}$ only, where $\fr{e}_1=(1, 2, 3)$ is the fixed point corresponding to the invariant K\"ahler-Einstein metric $g_{1}(0)\equiv g_{KE}$.  By (\ref{sca3c}) we see that $\Scal(g_{1}(t))$ is the positive hyperbola given by
\[
\Scal(g_{1}(t))=\frac{(d_1 + d_2 + d_3)(d_1 + 2 d_2 + 3 d_3)}{2(d_1+4d_2+9d_3)+2t(d_1+4d_2+6d_3)}\,.
\]
Thus,  $\Scal(g_{1}(t))$ increases  on the open interval $(-\infty, \frac{1}{2\lambda_1})$, where $g_{1}(t)$ is dedined. Note that the value $\Scal(g(0))=\frac{(d_1 + d_2 + d_3) (d_1 + 2 d_2 + 3 d_3)}{2(d_1 + 4 d_2 +9 d_3)}$ equals to the scalar curvature of the K\"ahler-Einstein metric $g_1(0)$. The limit of $\Scal(g_{1}(t))$ as $t\to \frac{1}{2\lambda_1}$  must be considered only  from below, and it follows that
\[
\lim_{t\to\frac{1}{2\lambda_1}}\Scal(g_{1}(t))=+\infty\,,\quad \lim_{t\to-\infty}\Scal(g_{1}(t))=0\,.
\]
For example for $\G_2/\U_2$ and $r=3$, we compute $\lambda_1=5/24$, so
\[
\Scal(g_{1}(t))=\frac{25}{12-5t}\,,\quad t\in(-\infty, \frac{12}{5})\,, \quad \Scal(g_{1}(0))=\frac{25}{12}
\]
and the graph of $\Scal(g_{1}(t))$ is given by
 \begin{figure}[h!]
 \centering
   {\small{ \includegraphics[scale=0.5]{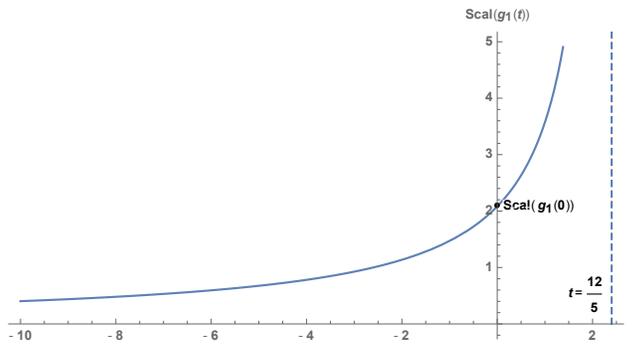}}}
     \caption{The graph of $\Scal(g_{1}(t))$ for  $\G_2/\U(2)$ with $\fr{m}=\fr{m}_1\oplus\fr{m}_2\oplus\fr{m}_3$.}
    \label{fig:3}
    \end{figure}
\end{example}

\end{document}